\documentclass[a4paper]{amsart}
\usepackage{tikz}
\usetikzlibrary{calc, decorations.pathreplacing, positioning}
\usepackage[english]{babel}
\usepackage[T1]{fontenc}
\usepackage{lmodern}
\usepackage[centering]{geometry}

\usepackage{MnSymbol}

\theoremstyle{plain}
\newtheorem{theo}{Theorem}

\newtheorem{prop}[theo]{Proposition}
\newtheorem{coro}[theo]{Corollary}

\theoremstyle{definition}
\newtheorem{defi}[theo]{Definition}

\theoremstyle{remark}
\newtheorem{rema}[theo]{Remark}

\usepackage{microtype}
\usepackage{enumerate}
\usepackage{graphicx}
\usepackage{color}

\usepackage{bm}
\usepackage{mathtools}
\usepackage[dvipsnames]{xcolor}
\definecolor{FlatRed}{RGB}{231,76,60}
\definecolor{FlatGreen}{RGB}{46,204,113}
\definecolor{FlatBlue}{RGB}{52,152,219}
\definecolor{FlatYellow}{RGB}{241,196,15}
\colorlet{FlatViolet}{FlatRed!50!FlatBlue}
\colorlet{FlatBrown}{FlatRed!50!FlatGreen}
\colorlet{FlatOrange}{FlatRed!50!FlatYellow}
\colorlet{FlatCyan}{FlatGreen!50!FlatBlue}

\usepackage{enumitem}
\usepackage{textcomp}
\usepackage[normalem]{ulem}

\usepackage{hyperref}
\hypersetup{
    colorlinks,
    linkcolor={FlatRed},
    citecolor={FlatGreen},
    urlcolor={FlatBlue}
}

\usepackage{nicematrix}
\usepackage{upgreek}

\title[Scaling Limit of the Hierarchical $H^{2|2}$ model]{Fine-Graining and Continuous Space Scaling Limit of the $H^{2|2}$ Model on the Hierarchical Lattice}

\author{Yichao Huang}
\address{Beijing Institute of Technology, School of Mathematics and Statistics, China}
\email{yichao.huang@bit.edu.cn}

\author{Jinglin Wang}
\address{Institut de Recherche Mathématique Avancée, Université de Strasbourg, France}
\email{jinglin.wang@math.unistra.fr}

\author{Xiaolin Zeng}
\address{Institut de Recherche Mathématique Avancée, Université de Strasbourg, France}
\email{zeng@math.unistra.fr}

\begin{document}

\begin{abstract}
We extend the exact coarse-graining result of Disertori, Merkl and Rolles~\cite{MR4517733} for the random field of $H^{2|2}$-model to the random Schrödinger operator representation of the $H^{2|2}$-model. We also introduce a fine-graining procedure as the reverse operation, and establish an associated exponential martingale property. Applying this fine-graining procedure to the $H^{2|2}$-model on the Dyson hierarchical lattice, we establish its continuous space scaling limit as a non-trivial random measure on $[0,1]$.

This random measure is almost surely singular with respect to the Lebesgue measure if and only if the Vertex Reinforced Jump Process on the Dyson hierarchical lattice is recurrent. If the process is transient, the random measure almost surely has an absolutely continuous component. The density of this component is everywhere non-trivial and can be identified with the pointwise limit of an exponential martingale associated with the $H^{2|2}$-model on the Dyson hierarchical lattice.
\end{abstract}

\maketitle

\section{Introduction}
The study of disordered quantum systems, particularly the phenomenon of Anderson localization, has greatly benefited from non-perturbative field-theoretic methods. A cornerstone of this approach is the Wegner-Efetov supersymmetry formalism, which maps problems concerning Random Schrödinger Operators to the study of a specific type of sigma model. Within this framework, the supersymmetric hyperbolic sigma model, or the $H^{2|2}$-model, has emerged as a paradigmatic theory~\cite{MR1178886} whose effective degrees of freedom are believed to capture the universal properties of localization transitions.

A remarkable bridge, first established by Sabot and Tarrès~\cite{MR3420510}, connects this intriguing theory of mathematical physics to the realm of self-interacting stochastic processes. This work relates the phase transition of the field theory to the long-term behavior of the Vertex Reinforced Jump Process (VRJP), a non-Markovian process with long-range memory effects. The ``effective field'' of the $H^{2|2}$-model serves as the random environment that governs the VRJP. This correspondence is profound, translating physical questions about disorder and criticality into probabilistic questions of recurrence and transience for a self-interacting random process.

In this work, we investigate this correspondence in a setting of long-range interactions on the {\it Dyson hierarchical lattice}. This graph has vertex set $\mathbb{X} = \{1, 2, \dots\}$ and is endowed with the ultrametric distance defined as the height of the smallest subtree containing both $i$ and $j$ (see Figure \ref{fig1}). Formally:
\begin{equation*}
    d_{\mathbb{X}}(i,j) = \min\{ n \ge 0 \mid \exists k \in \mathbb{N} \text{ s.t. } i, j \in (k 2^n, (k+1)2^n] \}.
\end{equation*}
The two-body interaction decays exponentially with this distance: $W_{ij} = \overline{W} (2\rho)^{-d_{\mathbb{X}}(i,j)}$, where $\overline{W} > 0$ is the inverse temperature and $\rho > 1$. The hierarchical structure serves as a tractable toy model for systems on $\mathbb{Z}^d$, where the parameter $\rho$ plays a role analogous to the spatial dimension. Our central contribution is the construction of a continuum limit for the model's effective field and a characterization of its properties via long-term behavior of the VRJP.

\begin{figure}[h]
    \centering
    \scalebox{0.84}{
\begin{tikzpicture}[
    xscale=1.5, yscale=1.2,
    site/.style={circle, fill=black, inner sep=2pt, label=below:{\footnotesize $#1$}},
    link/.style={thick, gray!30}, 
    active/.style={ultra thick, red!80!black}, 
    interaction/.style={<->, dashed, red!80!black, thin},
    axis_label/.style={font=\footnotesize, fill=white, inner sep=1pt, text=black!80}
]

    \def\numlevels{3}
    \pgfmathtruncatemacro{\numsites}{2^\numlevels}
    
    \draw[->, thick, black!70] (0.5, 0) -- (0.5, \numlevels+0.5) node[above, black] {$d_{\mathbb{X}}(i,j)$};
    
    \foreach \y in {1,...,\numlevels} {
        \draw[dotted, gray!50] (0.5, \y) -- (\numsites+0.5, \y);
        \node[left, font=\scriptsize, gray] at (0.5, \y) {$n=\y$};
    }

    \foreach \i in {1,...,\numsites} {
        \node[site=\i] (s-\i) at (\i, 0) {};
    }

    \foreach \level in {1,...,\numlevels} {
        \pgfmathtruncatemacro{\prevlevel}{\level-1}
        \pgfmathtruncatemacro{\numblocks}{2^(\numlevels-\level)}
        
        \foreach \k in {1,...,\numblocks} {
            \pgfmathtruncatemacro{\idxL}{2*\k-1}
            \pgfmathtruncatemacro{\idxR}{2*\k}
            
            \ifnum\level=1
                \coordinate (childL) at (s-\idxL);
                \coordinate (childR) at (s-\idxR);
            \else
                \coordinate (childL) at (p-\prevlevel-\idxL);
                \coordinate (childR) at (p-\prevlevel-\idxR);
            \fi
            
            \coordinate (parent) at ($(childL)!0.5!(childR) + (0, 1)$);
            \coordinate (p-\level-\k) at (parent);
            
            \draw[link] (childL) -- (childL |- parent) -- (childR |- parent) -- (childR);
            
            \node[circle, fill=white, draw=gray!40, inner sep=1.5pt, scale=0.6] at (parent) {};
        }
    }

    
    \draw[active] (s-2) -- (s-2 |- p-1-1) -- (p-1-1);
    \draw[active] (p-1-1) -- (p-1-1 |- p-2-1) -- (p-2-1);
    \draw[active] (p-2-1) -- (p-2-1 |- p-3-1) -- (p-3-1);
    
    \draw[active] (p-3-1) -- (p-2-2 |- p-3-1) -- (p-2-2);
    \draw[active] (p-2-2) -- (p-1-4 |- p-2-2) -- (p-1-4);
    \draw[active] (p-1-4) -- (s-7 |- p-1-4) -- (s-7);

    \node[circle, fill=red, inner sep=2pt] at (p-3-1) {};
    \node[right, font=\scriptsize, red!80!black] at (p-3-1) {};

    
    \draw[interaction, bend right=30] (s-2) to node[midway, below=2pt, fill=white, inner sep=1pt] 
        {\scriptsize $W_{2,7} = \overline{W}  (2\rho)^{-3}$} (s-7);

    \node[anchor=north west, align=left, font=\small] at (1, \numlevels+1.5) {
        Distance: $d_{\mathbb{X}}(2,7) = 3$\\
        Interaction mediated by block $B_3$.
    };

\end{tikzpicture}

 }
\caption{Hierarchical distance and couping.}
\label{fig1}
\end{figure}

To achieve this, we develop a novel ``fine-graining'' renormalization scheme. We consider the sequence of finite subgraphs $\Lambda_n = \{1, \dots, 2^n\}$ and embed them into the unit interval $[0,1]$ by identifying each vertex $k \in \Lambda_n$ with the dyadic interval $[\frac{k-1}{2^n}, \frac{k}{2^n}]$. The fine-graining splits each $k\in \Lambda_{n}$ into $\{2k-1,2k\}\in \Lambda_{n+1}$. In this setup, we prove the almost sure existence of a limiting object, which we call the {\it continuum space limit} of the model. This limit takes the form of a random measure $\mathfrak{m}_{\infty}$ on $[0,1]$. Our main result establishes a direct link between the behavior of the associated VRJP and the analytic properties of this measure.

\medskip

\noindent{\bf  Theorem (Informal).} Consider the $H^{2|2}$-model on the hierarchical lattice $\mathbb{X}$ at the critical dimension $2$. For any inverse temperature $\overline{W}$, a suitable continuous space scaling limit can be constructed as a limiting random measure $\mathfrak{m}_{\infty}$ on $[0,1]$ and the associated VRJP are related to it as follows:
\begin{itemize}
\item If the VRJP on $\mathbb{X}$ is recurrent, then the measure $\mathfrak{m}_{\infty}$ is almost surely singular with respect to the Lebesgue measure.
\item If the VRJP on $\mathbb{X}$ is transient, then the measure $\mathfrak{m}_{\infty}$ almost surely possesses a non-trivial absolutely continuous component with strictly positive density function eveywhere on $[0,1]$.
\end{itemize}

\medskip

This theorem provides a rigorous connection between the long time behavior of the VRJP and the spectral decomposition of the limiting random measure. We conjecture that for the critical dimension $2$, the VRJP on the Dyson hierarchical lattice is in fact recurrent for all values of the inverse temperature $\overline{W}$. If this conjecture holds, our result would imply that the continuum limit of the $H^{2|2}$-model at the critical dimension is always described by a singular measure. 

Our key technical innovation is the fine-graining procedure, which can be understood as an inverse of the more traditional coarse-graining renormalization. This method applies to any graph having a subset that is indistinguishable by its complementary, reverses the coarse-graining results of~\cite{MR4517733}, and provides a new tool for constructing continuum limits of statistical mechanical models on hierarchical structures (with a real-space renormalization). In particular, a key martingale property extending the martingale property discovered in~\cite[Theorem 1.(ii)]{MR3904155} is established to ensure the almost sure scaling limit.

The remainder of this paper is organized as follows. Section~\ref{sec:coarse_and_fine_graining_properties_of_the_h_2_2} recalls the definition of the $H^{2|2}$-model, extends the coarse-graining property of the effective field to a related random potential, and develops the reverse fine-graining procedure and the associated one-step martingale property, all on general graphs. Section~\ref{sec:the_h_2_2} applies these results to the $H^{2|2}$-model on the hierarchical lattice, introducing a new fine-graining coupling suitable for renormalization in the ultra-violet limit. Finally, Section~\ref{sec:continuous_space_scaling_limit_of_the_hiarachical_h_2_2} is devoted to proving the existence of the scaling limit and relates its Lebesgue decomposition to the long-time behavior of the VRJP.

\subsection*{Acknowledgements.} Y.H. is partially supported by National Key R\&D Program of China (No. 2022YFA1006300) and NSFC-12301164. J.W. and X.Z. acknowledge the support of the Institut de recherche en mathématiques, interactions \& applications: IRMIA++.

\section{Coarse and fine-graining properties of the $H^{2|2}$-model}\label{sec:coarse_and_fine_graining_properties_of_the_h_2_2}
In this section, we provide some backgrounds on the $H^{2|2}$-model on general graphs, then discuss its coarse and fine-graining properties. We also derive in Theorem~\ref{th:ExponentialMartingale} a one-step martingale property, which is a crucial input for the rest of the article when we construct the continuum limit.

\subsection{Definition of the $H^{2|2}$-model via its random Schrödinger operator representation}\label{subse:definition_of_the_h_2_2}
Let $\mathcal{G}=(V,E,W)$ be a (simple unoriented) finite connected weighted graph, with $V$ the set of its vertices, $E$ the set of its edges, with each edge denoted by $e=\{i,j\}$ for a pair of vertices $(i,j)\in V$, and $W$ the set of edge weights defined to be positive real numbers $W_e=W_{ij}=W_{ji}>0$ for each edge $e\in E$. We also set $W_{ii}=0$ for all $i\in V$, with the convention that $W_{ij}=0$ is equivalent to the absence of edges between vertices $i,j$.

Recall that the weighted graph Laplacian $\Delta_W$ is the self-adjoint operator on $l^{2}(V)$ such that for all $f:V\to\mathbb{R}$,
\begin{equation*}
    \forall i\in V,\quad (-\Delta_W f)(i)=(\sum\limits_{j:j\sim i}W_{ij})f(i)-\sum\limits_{j:j\sim i}W_{ij}f(j)
\end{equation*}
where $j\sim i$ indicates that $j$ is a neighbor of $i$. Following~\cite{MR3904155}, we define the $H^{2|2}$-model on the graph $\mathcal{G}$ by considering a random Schrödinger operator $H_{\beta}$, which acts on $f\in l^{2}(V)$ as
\begin{equation*}
    \forall i\in V,\quad (H_{\beta} f)(i)=2\beta_if(i)-\sum\limits_{j:j\sim i}W_{ij}f(j).
\end{equation*}
It has the following matrix representation in the canonical basis $(e_i)_{i\in V}$ of $l^{2}(V)$:
\begin{equation}\label{eq:Form_H}
H_{\beta}=\begin{pmatrix}
2\beta_i & \cdots & -W_{ij} \\
 \vdots & \ddots & \vdots \\
-W_{ij} & \cdots & 2\beta_j \\
\end{pmatrix}.
\end{equation}
In particular, its diagonal entries are determined by a vector $\beta\in \mathbb{R}^{V}$, which we call the random potential in \(H^{2|2}\)-model.

The law of the random vector $(\beta_{i})_{i\in V}$ for the $H^{2|2}$-model is given explicitly by the following probability distribution:
\begin{equation}\label{eq:Distribution_Beta}
    d\nu^{W}(\beta)=\left(\sqrt{\frac{2}{\pi}}\right)^{|V|}\mathbf{1}_{\{H_\beta>0\}}e^{-\frac{1}{2}\langle1,H_\beta 1\rangle} \frac{1}{\sqrt{\det H_\beta}}\prod_{i\in V}d\beta_{i}.
\end{equation}
where the inner product $\langle x,y\rangle=\sum_{i\in V}x_iy_i$ is the canonical one and in $\left<1,H_{\beta} 1\right>$, $1$ abusively denotes the constant vector with all entries equal to $1$. The indicator $\{H_\beta>0\}$ restricts the distribution to positive definite matrices $H_\beta$. It is a non-trivial fact that $d\nu^{W}(\beta)$ is a probability measure~\cite[Theorem~1]{MR3729620}, and that $\beta_i$ are $\nu^{W}$-almost surely positive for all $i\in V$. In this way, $H_{\beta}$ corresponds to a random Schrödinger operator with a random potential which is not i.i.d. but correlated according to~\eqref{eq:Distribution_Beta}. The measure $d\nu^{W}(\beta)$ generalizes the Gamma$(\frac{1}{2})$ distrubtion in the random Schrödinger operator setting.

The distribution $d\nu^{W}$ above belongs to a larger family of probability measures with extra parameters $\theta\in\mathbb{R}^{V}_{>0}$~\cite{MR3729620} and $\eta\in\mathbb{R}^{V}_{+}$~\cite{letac2017multivariate} defined as
\begin{equation}\label{eq:MultiInverseGaussian}
    d\nu^{W,\theta,\eta}(\beta)=\left(\sqrt{\frac{2}{\pi}}\right)^{|V|}\mathbf{1}_{\{H_\beta>0\}}e^{-\frac{1}{2}\left(\langle\theta,H_\beta\theta\rangle+\langle\eta,H_\beta^{-1}\eta\rangle-2\langle\theta,\eta\rangle\right)}\frac{\prod_{i\in V}\theta_i}{\sqrt{\det H_\beta}}\prod_{i\in V}d\beta_{i},
\end{equation}
called multivariate (or random Schrödinger operator) generalization of the (reciprocal) inverse Gaussian distribution. In the most general setting with parameters $W,\theta,\eta$ as above, this law is characterized by its explicit Laplace transform:
\begin{equation}\label{eq:LaplaceTransform_MultiInverseGaussian}
\begin{split}
    \forall \lambda\in(\mathbb{R}_+)^{V},\quad &\int e^{-\langle\lambda,\beta\rangle} d\nu^{W,\theta,\eta}(\beta)\\
    ={}&e^{\sum\limits_{i\sim j}W_{ij}(\sqrt{(\lambda_{i}+\theta_{i}^2)(\lambda_{j}+\theta_{j}^2)}-\theta_{i}\theta_{j})-\sum\limits_{i\in V}\eta_{i}(\sqrt{\lambda_{i}+\theta_{i}^2}-\theta_{i})}\prod_{i\in V}\frac{\theta_i}{\sqrt{\lambda_{i}+\theta_{i}^2}}.
\end{split}
\end{equation}
We call the random Schrödinger operator whose diagonal entries are sampled with the law~\eqref{eq:Distribution_Beta} the $H^{2|2}$-model. More generally, when the entries are sampled from the law~\eqref{eq:MultiInverseGaussian}, we refer to the operator as the $H^{2|2}$-model with initial local time $\theta$ and boundary condition $\eta$. 

In fact, as an immediate consequence of \eqref{eq:LaplaceTransform_MultiInverseGaussian}, distribution defined in \eqref{eq:MultiInverseGaussian} is related to the one in \eqref{eq:Distribution_Beta} in the following way:  given \( \widetilde{\mathcal{G}}=(\widetilde{V},\widetilde{E},\widetilde{W}) \) with $\widetilde{V}=V\cup\{\delta\}$, and parameters $\widetilde{\theta}$, we have
\begin{equation}\label{eq:nu-eta-is-marginal}\begin{aligned}
 (\beta_{i})_{i\in \widetilde{V}} \sim \nu^{\widetilde{W},\widetilde{\theta},0} \implies (\beta_{i})_{i\in V} \sim \nu^{W,\theta,\eta},
\end{aligned}\end{equation}
where \(\eta_{i}=\widetilde{W}_{\delta,i}\), $\theta_{i}=\widetilde{\theta}_{i}$ for all $i\in V$ and $W_{ij}=\widetilde{W}_{ij}$ for all $i,j\in V$. In this paper, $\theta$ is always constant 1.

In what follows, we drop the subscript $\beta$ when there is no ambiguity, in particular, $H=H_{\beta}$.

\subsection{Green function and random walk expansion of the $H^{2|2}$-model}\label{subse:green_function_and_random_walk_expansion_of_the_h_2_2}
It is useful to introduce the matrix inverse $G=H^{-1}$ of $H$, i.e. the random Green function matrix of the $H^{2|2}$-model on the graph $\mathcal{G}$. Since the random Schrödinger matrix $H$ is almost surely positive definite (in particular, $H$ is almost surely an $M$-matrix~\cite[Chapter~6]{MR1298430}), we have the following (deterministic) random walk expansion representation~\cite{MR719815} of $G(i,j)$,  according to~\cite[Proposition~6]{MR3904155}:
\begin{equation*}
    \forall i,j\in V,\quad G(i,j)=\sum_{\sigma:i\to j}\frac{W_{\sigma}}{(2\beta)_\sigma}
\end{equation*}
where we sum over finite length nearest neighbor random walk paths $\sigma:i\to j$. If the path $\sigma$ has length $|\sigma|=n$, so that $\sigma=(\sigma_{k})_{k=0,\dots,n}$ with $\sigma_0=i$ and $\sigma_n=j$, then the quantities $W_{\sigma}$ and $(2\beta)_\sigma$ are defined as
\begin{equation*}
    W_{\sigma}=\prod_{k=0}^{n-1}W_{\sigma_k,\sigma_{k+1}},\quad (2\beta)_{\sigma}=\prod_{k=0}^{n}2\beta_{\sigma_k}.
\end{equation*}

\begin{rema}\label{rema:RandomWalkGreen}
Since the map $H\mapsto G=H^{-1}$ is a bijection, when the random walk expansion of $G(i,j)$ holds for all $i,j\in V$, $G$ is necessarily the inverse of a matrix of the form~\eqref{eq:Form_H}, as a consequence of~\cite[Proposition~5.3]{MR4254805}.
\end{rema}

\subsection{Coupling with different pinning positions}\label{subse:coupling_with_different_pinning_positions}
The random Schrödinger operator representation of the $H^{2|2}$-model on $\mathcal{G}$ provides a coupling of the so-called effective measures of the $H^{2|2}$-model on $\mathcal{G}$ with different pinning positions. We give a brief description of this fact following~\cite[Lemma~2]{MR3904155}.

To this end, define the random $u$-field associated to the $H^{2|2}$-model on $\mathcal{G}$. Fix $i_0\in V$ and consider the random vector $(u_i)_{i\in V}$ by setting
\begin{equation*}
    \forall i\in V,\quad e^{u_i}=\frac{G(i_0,i)}{G(i_0,i_0)}.
\end{equation*}
This new system of coordinates depends on the choice of pinning point $i_0$: we should have denoted them by $e^{u^{(i_{0})}_{i}}= \frac{G(i_{0},i)}{G(i_{0},i_{0})}$, in particular, $u^{(i_{0})}_{i_{0}}=u_{i_0}=0$ at the pinning vertex $i_0$. We drop the superscript $(i_{0})$ for notation ease and the pinning point will be clear in the context.

It can be shown~\cite[Theorem~2]{MR3420510} that the random $u$-field has the explicit probability law
\begin{equation}\label{eq:Distribution_u}
    d\mu_{i_0}^{W}(u)=\frac{1}{\sqrt{2\pi}^{|V|-1}}e^{-\sum\limits_{i\in V}u_i-\sum\limits_{i\sim j}W_{ij}(\cosh(u_i-u_j)-1)}\sqrt{D(W,u)}\mathbf{1}_{\{u_{i_0}=0\}}du
\end{equation}
where $D(W,u)$ is a sum over the spanning trees $T$ of the graph $\mathcal{G}$,
\begin{equation}\label{eq:DeterminantMatrixTree}
    D(W,u)=\sum_{T}\prod_{(ij)\in E(T)}W_{ij}e^{u_i+u_j}.
\end{equation}
Choosing a different $i_0$ yields a different measure $\mu^{W}_{i_0}$ with a different pinning condition.

It is shown in~\cite[Theorem~3]{MR3729620} that the random field $(u_i)_{i\in V}$ sampled from the above distribution $\mu_{i_0}^{W}$ is independent of $G(i_0,i_0)$ (and this latter distributed as an inverse Gamma$(\frac{1}{2})$ random variable), and that given $(u_i)_{i\in V}$ and $G(i_0,i_0)$, one reconstructs by linear algebra~\cite[Proposition~2]{MR3729620} the random $\beta$-field $(\beta_i)_{i\in V}$ by the following formula:
\begin{equation}\label{eq:RelationBeta_u}
    \forall i\in V,\quad 2\beta_i=\sum_{i\sim j}W_{ij}e^{u_j-u_i}+\mathbf{1}_{\{i=i_0\}}\frac{1}{G(i_0,i_0)}.
\end{equation}
In particular, the random field $(u_i)_{i\in V}$ is measurable with respect to the random field $(\beta_{i})_{i\in V\setminus\{i_0\}}$; conversely, at site $i_0$ one only needs one independent inverse Gamma$(\frac{1}{2})$ random variable to reconstruct $\beta_{i_0}$ from the data $(u_i)_{i\in V}$.

To summarize, given a finite graph $\mathcal{G}=(V,E,W)$, the $H^{2|2}$-model on $\mathcal{G}$ can refer to either of the following sets of information:
\begin{enumerate}
    \item The random vector $(\beta_i)_{i\in V}$ with law $\nu^{W}(d\beta)$ or the random Schrödinger operator $H$;
    \item The random Green function matrix $G=H^{-1}$;
    \item A fixed vertex $i_0\in V$, the random vector $(u_i)_{i\in V}$ with law $\mu^{W}_{i_0}$ and an independent inverse Gamma$(\frac{1}{2})$ random variable representing the variable $G(i_0,i_0)$.
\end{enumerate}
In the sequel, we will switch freely between these representations of the $H^{2|2}$-model.

The law $\mu_{i_0}^W$ was first introduced in~\cite{MR2728731} as the effective field of the supersymmetric hyperbolic sigma model ($H^{2|2}$-model). It was a surprising discovery~\cite{MR3420510} that this is also the law of the random environment for the Vertex Reinforced Jump Process (VRJP).

\subsection{Coarse-graining procedure}\label{subse:coarse_graining_procedure}
We now recall a coarse-graining property of the $H^{2|2}$-model, which was first observed in~\cite{MR4517733} in the case of the $H^{2|2}$-model on complete graphs with hierarchical interactions. Our formulation below differs slightly: we do not talk about the supersymmetric spin values, but we discuss additionally the random potential in the Schrödinger representation of \(H^{2|2}\)-model . We provide an alternative proof based on the random walk expansion recalled in Section~\ref{subse:green_function_and_random_walk_expansion_of_the_h_2_2}.

\begin{defi}[Indistinguishability]
Given a finite weighted graph $\mathcal{G}=(V,E,W)$, we say that a subset $U\subset V$ is \emph{indistinguishable from outside} if
\begin{equation*}
    \forall i,j\in U, \forall k\notin U,\quad W_{ki}=W_{kj}.
\end{equation*}
If additionally a boundary condition $\eta\in \mathbb{R}_{+}^{V}$ is given, $U$ is indistinguishable from outside if additionally
$$\forall i,j\in U,\ \eta_{i}=\eta_{j}.$$
In words, for a given vertex $k$ not in $U$, it is connected to all vertices in $U$ with the same weight. 
\end{defi}

When a set $U$ is indistinguishable from outside, we can coarse-grain this set into a single vertex using the following transformation:
\begin{theo}\label{th:CoarseGraining}
Consider the $H^{2|2}$-model (with boundary condition $\eta$) on a finite graph $\mathcal{G}$ by the data of its random Green function operator $G=H^{-1}$. If $U\subset V$ is a subset of vertices indistinguishable from outside, then one can effectively replace $U$ by a single vertex $u$ and coarse-grain the matrix $G$ into a matrix $G'$ on $l^{2}(V')$ with $V'=(V\setminus U)\cup\{u\}$, whose coefficients are defined by
\begin{equation}\label{eq:G'_G}
    G'(k,\ell)=\begin{cases} G(k,\ell) & k,\ell\notin U \\ \frac{1}{|U|}\sum_{i\in U}G(k,i) & k\notin U,\ell=u \\ \frac{1}{|U|^2}\sum_{i,j\in U}G(i,j) & k=\ell=u \end{cases}.
\end{equation}

The resulting matrix $G'$ has the same law of the Green operator defined by the $H^{2|2}$-model (with boundary condition $\eta'$) on the reduced graph $\mathcal{G}'=(V',E',W')$ with edge weights
\begin{equation}\label{eq:ReducedEdgeWeights}
    W'_{k,\ell}=\begin{cases}W_{k,\ell} & k,\ell\ne u \\ \sum_{i\in U}W_{k,i} & \ell=u, k\ne u\end{cases}.
\end{equation}
(and boundry condition is $\eta'_{u}= \sum_{i\in U}\eta_{i}$ and $\eta'_{k} = \eta_{k}$ for $k\ne u$).

Furthermore, the inverse of $G'$ is of the form~\eqref{eq:Form_H}, i.e. an $H^{2|2}$-model random Schrödinger operator on the reduced graph $\mathcal{G'}$, where the random vector $\beta'$ is defined by $\beta'_k=\beta_k$ if $k\in V'\setminus\{u\}$ and
\begin{equation}\label{eq:Definition_Beta'u}
    \frac{1}{2\beta'_u}=\frac{1}{|U|^2}\sum\limits_{i,j\in U}\widehat{G}^{U}(i,j)
\end{equation}
where $\widehat{G}^{U}=(H_{U,U})^{-1}$ with $H_{U,U}$ is the sub-matrix of $H$ on $U\times U$.
\end{theo}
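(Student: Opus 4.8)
The plan is to prove the two assertions separately: first the deterministic ``furthermore'' part, that the averaged matrix $G'$ is the inverse of a Schrödinger operator $H'$ of the form~\eqref{eq:Form_H} on $\mathcal{G}'$ with the stated diagonal $\beta'$; and then the distributional part, that $\beta'$ carries the law $\nu^{W'}$ of the $H^{2|2}$-model on the reduced graph. The first part is pure linear algebra driven by indistinguishability, while the second is the genuinely probabilistic statement and is where the real difficulty lies.

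For the deterministic part, write $R:=V\setminus U$ and decompose $H$ into blocks along $V=R\sqcup U$. Indistinguishability forces the coupling block to be rank one: since $W_{ki}=:w_k$ does not depend on $i\in U$, we have $H_{RU}=-w\,\mathbf{1}_U^{\top}$ with $w=(w_k)_{k\in R}$ and $\mathbf{1}_U$ the all-ones vector on $U$. I would then compute the Schur complement of the block $H_{UU}$,
\begin{equation*}
S=H_{RR}-H_{RU}H_{UU}^{-1}H_{UR}=H_{RR}-c\,w\,w^{\top},\qquad c:=\mathbf{1}_U^{\top}\widehat{G}^{U}\mathbf{1}_U=\sum_{i,j\in U}\widehat{G}^{U}(i,j),
\end{equation*}
using $H_{UU}^{-1}=\widehat{G}^{U}$. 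On the other side, the reduced operator $H'$ has the single off-diagonal coupling $W'_{ku}=|U|w_k$ from~\eqref{eq:ReducedEdgeWeights} and diagonal $2\beta'_u$ at $u$; its Schur complement with respect to the scalar $2\beta'_u$ is $H_{RR}-\tfrac{|U|^2}{2\beta'_u}\,w\,w^{\top}$. The defining relation~\eqref{eq:Definition_Beta'u}, namely $\tfrac{1}{2\beta'_u}=c/|U|^2$, is exactly what makes these two Schur complements coincide. A short block-inversion computation then shows that the three cases of~\eqref{eq:G'_G} --- the $R\times R$ block, the averaged column $\tfrac1{|U|}\sum_{i}G(k,i)$, and the averaged corner $\tfrac1{|U|^2}\sum_{i,j}G(i,j)$ --- are precisely the entries of $(H')^{-1}$, so $G'=(H')^{-1}$ with $\beta'_k=\beta_k$ on $R$. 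The same conclusion can be reached combinatorially from the random walk expansion of Section~\ref{subse:green_function_and_random_walk_expansion_of_the_h_2_2}: grouping the excursions of a path into $U$ according to their entry and exit vertices produces the factor $\sum_{i,j}w_k\,\widehat{G}^U(i,j)\,w_{k'}=c\,w_kw_{k'}$, which matches the single detour $k\to u\to k'$ on $\mathcal{G}'$ carrying weight $\tfrac{W'_{ku}W'_{uk'}}{2\beta'_u}=c\,w_kw_{k'}$; by Remark~\ref{rema:RandomWalkGreen} this identifies $G'$ as a form-\eqref{eq:Form_H} inverse.

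For the distributional part I would use the $u$-field description of the $H^{2|2}$-model from Section~\ref{subse:coupling_with_different_pinning_positions}: fix the pinning vertex $i_0\in R$, so that $G'(i_0,i_0)=G(i_0,i_0)$ is unchanged and still an inverse Gamma$(\tfrac12)$ variable, and express the reduced $u$-field through $u'_k=u_k$ for $k\in R$ and $u'_u=\log\!\big(\tfrac1{|U|}\sum_{i\in U}e^{u_i}\big)$. Since $u'$ is a function of the original field $(u_i)_{i\in V}$ alone, it is automatically independent of $G(i_0,i_0)=G'(i_0,i_0)$, which settles the independence required by this representation. The theorem therefore reduces to a single statement: the pushforward of $\mu^{W}_{i_0}$ under the map $(u_i)_{i\in V}\mapsto(u'_k)_{k\in V'}$ equals $\mu^{W'}_{i_0}$.

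This last step is the main obstacle. The map keeps the $R$-coordinates and collapses the $|U|$ coordinates on $U$ to the single log-average $u'_u$, so verifying it amounts to integrating out $|U|-1$ internal degrees of freedom in the density~\eqref{eq:Distribution_u} and matching both the Boltzmann weight and the spanning-tree determinant $D(W,u)$ to their reduced counterparts $D(W',u')$. Indistinguishability is what keeps this tractable: the $R$--$U$ interaction energy depends on $(u_i)_{i\in U}$ only through the symmetric sums $\sum_i e^{\pm u_i}$, so the fibre integral at fixed $\sum_i e^{u_i}=|U|e^{u'_u}$ factorises cleanly from the $R$-variables. The subtle point is that a naive substitution does not preserve the energy --- the cross term would require $(\sum_i e^{u_i})(\sum_i e^{-u_i})=|U|^2$, which fails off the diagonal --- so the reconciling factor must be produced by the internal integral together with the matrix-tree determinant, and I expect this determinant bookkeeping to be the technical heart of the argument. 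An alternative that avoids the $u$-field is to compute the Laplace transform of $\beta'$ directly from~\eqref{eq:LaplaceTransform_MultiInverseGaussian}, recognising $\tfrac{1}{2\beta'_u}=\tfrac1{|U|^2}\mathbf{1}_U^{\top}\widehat{G}^U\mathbf{1}_U$ as a quadratic form of the internal Green function of the type coupled to the parameter $\eta$ in the family~\eqref{eq:MultiInverseGaussian}, and using~\eqref{eq:nu-eta-is-marginal} to convert the averaging over $U$ into the boundary condition $\eta'_u=\sum_{i\in U}\eta_i$ on the reduced graph.
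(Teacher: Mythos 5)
Your treatment of the deterministic part is correct and essentially coincides with the paper's alternative algebraic proof: indistinguishability forces the coupling block $H_{RU}=-w\,1_U^{\top}$ to be rank one, the two Schur complements agree exactly when $\frac{1}{2\beta'_u}=\frac{1}{|U|^2}\langle 1_U,\widehat{G}^U 1_U\rangle$ as in~\eqref{eq:Definition_Beta'u}, and block inversion recovers the three cases of~\eqref{eq:G'_G}. (The paper's primary proof reaches the same conclusion combinatorially through the skeleton decomposition of the random walk expansion, which you also sketch; either route is fine.)

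The distributional part, however, is not proved. You correctly reduce the theorem to the claim that the pushforward of $\mu^{W}_{i_0}$ under $(u_i)_{i\in V}\mapsto\big((u_k)_{k\notin U},\,\log\tfrac1{|U|}\sum_{i\in U}e^{u_i}\big)$ is $\mu^{W'}_{i_0}$, and you correctly diagnose that a naive substitution does not preserve the energy; but you then defer the fibre integral and the matrix-tree determinant matching as ``the technical heart of the argument'' without carrying it out. That computation \emph{is} the theorem, so the proof is incomplete at exactly the point where the probabilistic content lives. Your one-sentence Laplace-transform alternative does not close the gap as stated either: the family~\eqref{eq:MultiInverseGaussian} gives access to exponentials of quadratic forms such as $\langle\zeta,\widehat{G}^U\zeta\rangle$, i.e.\ to the law of $\tfrac{1}{2\beta'_u}$, not to the joint Laplace transform of $\beta'_u$ itself with $(\beta_k)_{k\notin U}$, so ``recognising the quadratic form'' does not by itself produce $\nu^{W'}$. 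The paper sidesteps both difficulties with a different pivot: taking $\eta=\kappa 1_U/|U|$ in~\eqref{eq:LaplaceTransform_MultiInverseGaussian} computes the joint Laplace transform of $G'(u,u)=\tfrac{1}{|U|^2}\langle 1_U,G1_U\rangle$ (the \emph{full-graph} Green function, which is directly coupled to $\eta$) together with $(\beta_k)_{k\notin U}$; this factorizes, showing $G'(u,u)$ is an independent inverse Gamma$(\tfrac12)$ variable and simultaneously identifying the law of $(\beta_k)_{k\notin U}$ with the correct reduced marginal. The $u$-field representation pinned at the coarse-grained vertex $u$ (not at $i_0$) then reassembles these two ingredients into the $H^{2|2}$-model on $\mathcal{G}'$, and no integration over the internal degrees of freedom of $U$ is ever performed. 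Either carry out the determinant bookkeeping you describe, or switch to this computation.
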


There is a slight notation collision of the coarse-grained vertex $u$ and the $u$-field of \(H^{2|2}\)-model, but contextually it will always be clear.

\begin{rema}\label{rema:AlmostSure+Simultaneous}
\begin{enumerate}
\item Using \cite[Corollary 1]{MR3729620}, Theorem \ref{th:CoarseGraining} (and Theorem \ref{th:FineGrainBeta} below) can be formulated with general initial local time $\theta$. 
\item It is important to note that the above coarse-graining procedure is done in the almost sure sense simultaneously in $G$ and in the $\beta$-field. In particular, the last display~\eqref{eq:Definition_Beta'u} above shows that $\beta'_u$ can be written as a explicit function of $(\beta_i)_{i\in U}$. This formula shares similar flavor with the exact renormalization procedures of the classical random Schrödinger operators on the hierarchical lattice, see e.g.~\cite{MR1063180,MR3649447}.
\end{enumerate}
\end{rema}

\begin{proof}
By \eqref{eq:nu-eta-is-marginal} it is enough to consider the case without boundary condition. We first show that the inverse of $G'$ is indeed of the form~\eqref{eq:Form_H} with the above-defined random potential $\beta'$, then we show that the joint law of $(\beta'_j)_{k\in V'}$ is the correct one.

For the first step, we use Remark~\ref{rema:RandomWalkGreen}. We start by writing the random walk expansion of the operator $G$ defined on the original graph $\mathcal{G}$ for $k,l\notin U$ as in Section~\ref{subse:green_function_and_random_walk_expansion_of_the_h_2_2},
\begin{equation*}
    G(k,l)=\sum_{\sigma:k\to l}\frac{W_{\sigma}}{(2\beta)_\sigma}.
\end{equation*}
Decompose each finite path $\sigma$ from $k$ to $l$ in the following way: if $|\sigma|=m+1$ is the length of the path $\sigma$, let $k_0=k$ and $k_{m+1}=l$ and write
\begin{equation*}
    \sigma=\{k_0\to k_1\curvearrowright i_1\rightsquigarrow j_1\curvearrowright k'_1 \to k_2\curvearrowright i_2\rightsquigarrow j_2\curvearrowright k'_2~\cdots~\curvearrowright i_m\rightsquigarrow j_m\curvearrowright k'_m\to k_{m+1}\}
\end{equation*}
where the $k$ indexed vertices are outside $U$, the $i,j$ indexed vertices are inside $U$, $k'\to k$ is the sum over paths from $k'$ to $k$ without using the vertices in $U$, $k\curvearrowright i$ or $j\curvearrowright k$ means a single step jump that either enters or goes out of $U$, and $i\rightsquigarrow j$ is the sum over paths from $i$ to $j$ in $U$. In words, we record the instances when the path $\sigma$ enters or quits the set $U$. We say that $(k_q,i_q,j_q,k'_q)_{1\leq q\leq m}$ is the skeleton of $\sigma$, denoted as $\text{sk}(\sigma)$.

Consider the reduced path $\sigma'$ on the reduced graph $V'$ derived from the original path $\sigma$ on $V$ by tracing out jumps inside $U$, i.e.
\begin{equation*}
    \sigma'=\{k_0\to k_1\curvearrowright u\curvearrowright k'_1\to k_2\curvearrowright u\curvearrowright k'_2~\cdots~\curvearrowright u\curvearrowright k'_m\to k_{m+1}\},
\end{equation*}
and we call $(k_q,k'_q)_{1\leq q\leq m}$ the reduced skeleton of $\sigma$, denoted $\text{resk}(\sigma)$. Our goal is to write a reduced expansion of $G'(k,l)=G(k,l)$ in terms of a sum over the reduced paths $\sigma'$ on the reduced graph $V'$ with suitable choices of $W'$ and $\beta'$ (which will be shown to be given respectively by~\eqref{eq:ReducedEdgeWeights} and~\eqref{eq:Definition_Beta'u}):
\begin{equation*}
    G(k,l)=\sum\limits_{\sigma':k\to l}\frac{W'_{\sigma'}}{(2\beta')_{\sigma'}}.
\end{equation*}
If this can be done, then by posing $G'(k,l)=G(k,l)$ we get a random walk expansion of the reduced random operator $G'$. If such rewriting can be done for $G'(k,u)$ with $k\in V'\setminus\{u\}$ and $G'(u,u)$ defined in~\eqref{eq:G'_G}, then by Remark~\ref{rema:RandomWalkGreen}, the reduced operator $G'$ is indeed the inverse of some random Schrödinger operator $H'_\beta$ of the form~\eqref{eq:Form_H} with $W'$ defined as in~\eqref{eq:ReducedEdgeWeights}.

We therefore reorganize the path sum for $G(k,l)$ according to the skeleton of $\sigma$:
\begin{equation*}
    G(k,l)=\sum\limits_{m\geq 0}\sum\limits_{\substack{(k_q,i_q,j_q,k'_q)\\ 1\leq q\leq m}}\sum\limits_{\substack{\sigma:k\to l\\\text{sk}(\sigma)=(k_q,i_q,j_q,k'_q)_{1\leq q\leq m}}}\frac{W_{\sigma}}{(2\beta)_\sigma},
\end{equation*}
and show that it can be rewritten as
\begin{equation*}
    G(k,l)=\sum\limits_{m\geq 0}\sum\limits_{\substack{(k_q,k'_q)\\ 1\leq q\leq m}}\sum\limits_{\substack{\sigma':k\to l\\\text{sk}(\sigma)=(k_q,k'_q)_{1\leq q\leq m}}}\frac{W'_{\sigma'}}{(2\beta)_\sigma'}
\end{equation*}
according to the reduced skeleton of $\sigma'$, where we pose $W'$ as in~\eqref{eq:ReducedEdgeWeights} and $\beta'$ as in~\eqref{eq:Definition_Beta'u}, especially
\begin{equation*}
    \frac{1}{2\beta'_u}=\frac{1}{|U|^2}\sum_{i,j\in U}\widehat{G}^{U}(i,j).
\end{equation*}
Indeed, for a fixed reduced skeleton $(k_q,k'_q)_{1\leq q\leq m}$, summing over the admissible intermediate states $(i_q,j_q)_{1\leq q\leq m}$ of the original skeleton $(k_q,i_q,j_q,k'_q)_{1\leq q\leq m}$ yields (with $\sigma_{|\bullet}$ denoting the restriction of $\sigma$ to the corresponding interval)
\begin{equation*}
\begin{split}
    &\sum\limits_{\substack{\sigma:k\to l\\(i_q,j_q)_{1\leq q\leq m}}}\frac{W_{\sigma}}{(2\beta)_\sigma}\mathbf{1}_{\{\text{resk}(\sigma)=(k_q,k'_q)\}}\\
    ={}&\left(\sum\limits_{\sigma:k\to k_1}\frac{W_{\sigma_{|[k_0,k_1]}}}{(2\beta)_{\sigma_{|[k_0,k_1]}}}\right)\left(\sum_{i_1,j_1\in U}W_{k_1,i_1}\widehat{G}^{U}(i_1,j_1)W_{j_1,k'_1}\right)\left[\sum\limits_{\substack{\sigma:k'_1\to l\\(i_q,j_q)_{2\leq q\leq m}}}\frac{W_{\sigma}}{(2\beta)_\sigma}\mathbf{1}_{\{\text{resk}(\sigma)=(k_q,k'_q)_{2\leq q\leq m}\}}\right]\\
    ={}&\left(\sum\limits_{\sigma':k\to k_1}\frac{W_{\sigma'_{|[k_0,k_1]}}}{(2\beta')_{\sigma'_{|[k_0,k_1]}}}\right)\left(\frac{W'_{k_1,u}W'_{u,k'_1}}{2\beta'_u}\right)\left[\sum\limits_{\substack{\sigma:k'_1\to l\\(i_q,j_q)_{2\leq q\leq m}}}\frac{W_{\sigma}}{(2\beta)_\sigma}\mathbf{1}_{\{\text{resk}(\sigma)=(k_q,k'_q)_{2\leq q\leq m}\}}\right]\\
    ={}&\left(\sum\limits_{\sigma':k\to k'_1}\frac{W_{\sigma'_{|[k_0,k_1]}}}{(2\beta')_{\sigma'_{|[k_0,k_1]}}}\right)\left[\sum\limits_{\substack{\sigma:k'_1\to l\\(i_q,j_q)_{2\leq q\leq m}}}\frac{W_{\sigma}}{(2\beta)_\sigma}\mathbf{1}_{\{\text{resk}(\sigma)=(k_q,k'_q)_{2\leq q\leq m}\}}\right]\\
    ={}&\sum\limits_{\substack{\sigma':k\to l\\\text{resk}(\sigma)=(k_q,k'_q)}}\frac{W'_{\sigma'}}{(2\beta')_{\sigma'}},
\end{split}
\end{equation*}
where in the last equality, we iterate the same procedure for $2\leq q\leq m$.

Similar reduced path expansion by reorganization can be done for $G'(k,u)$ for $k\notin U$ and $G'(u,u)$ and the results are all in the above form. Therefore, by Remark~\ref{rema:RandomWalkGreen}, the inverse of $G'$ must be of the form~\eqref{eq:Form_H} where $\beta'_k=\beta_k$ if $k\notin U$ and $\beta'_u$ defined by formula~\eqref{eq:Definition_Beta'u} above.

\medskip

It remains to study the joint law of $(\beta'_k)_{k\in V'}$. To this end, we first calculate the joint Laplace transform of $G'(u,u)$ and $(\beta_k)_{k\notin U}$. Let $1_{U}\in\mathbb{R}^{V}$ be such that $1_U(i)=\mathbf{1}_{\{i\in U\}}$ and $\kappa>0$, $\lambda'\in\mathbb{R}^{V\setminus U}_+$ which we extend to $\lambda\in\mathbb{R}^{V}$ by adding null entries. Using~\eqref{eq:LaplaceTransform_MultiInverseGaussian} with $\theta=1$ and $\eta=\frac{\kappa 1_{U}}{|U|}$, the joint Laplace transform
\begin{equation}\label{eq:Beta_CoarseGrain_Laplace}
\begin{split}
    &\mathbb{E}\left[e^{-\frac{\kappa^2}{2}G'(u,u)-\langle\lambda',\beta\rangle_{V\setminus U}}\right]\\
    ={}&\mathbb{E}\left[e^{-\frac{1}{2}\langle\frac{\kappa 1_{U}}{|U|},G\frac{\kappa 1_{U}}{|U|}\rangle_{V}-\langle\lambda,\beta\rangle_{V}}\right]\\
    ={}&e^{-\kappa}\cdot e^{-\sum\limits_{k,l\notin U}W_{kl}(\sqrt{(1+\lambda_k)(1+\lambda_l)}-1)}\prod_{k\notin U}\frac{1}{\sqrt{1+\lambda_k}}
\end{split}
\end{equation}
factorizes, so $G'(u,u)$ is independent of $(\beta_{i})_{i\in V\setminus U}$ and follows the inverse Gamma$(\frac{1}{2})$ distribution.\footnote{The inverse Gamma$(\frac{1}{2})$ distribution has $\frac{\kappa^2}{2}$-Laplace transform $\int_{0}^{\infty}e^{-\frac{\kappa^2}{2}\frac{1}{2t}}\frac{1}{\sqrt{t}}e^{-t}\frac{1}{\Gamma(\frac{1}{2})}dt=e^{-\kappa}$.}

Now that we have seperated out the independent Gamma random variable $G'(u,u)$, we can use the $u$-field representation of the $H^{2|2}$-model to check that $(\beta'_k)_{k\in V'}$ is indeed the $H^{2|2}$-model on the reduced graph $\mathcal{G}'$. For this, first notice the following random walk expansion
\begin{equation*}
    \forall k\notin U,\quad \frac{G'(u,k)}{G'(u,u)}=\sum\limits_{\sigma':k\twoheadrightarrow u}\frac{W_{\sigma'}}{(2\beta')_{\sigma'}}=\sum\limits_{\sigma:k\twoheadrightarrow u}\frac{W_{\sigma}}{(2\beta)_{\sigma}}
\end{equation*}
where we sum over finite paths $\sigma'$ or $\sigma$ killed before hitting $u$. This follows from the same reorganization scheme as above: the random walk path sum calculates the contribution before the first hitting time at $u$, and $G'(u,u)$ corresponds to the large loop between the first visit and the last visit at $u$. Since $G'(u,u)$ is independent of $(\beta_{i})_{i\in V\setminus U}$ by the Laplace transform calculation above, it is independent of $(\frac{G'(u,k)}{G'(u,u)})_{k\notin U}$. But the display above also shows that the law of $(\frac{G'(u,k)}{G'(u,u)})_{k\notin U}$ is the same as the one generated by the $H^{2|2}$-model on the reduced graph, and the previous paragraph shows that the independent factor $G'(u,u)$ is distributed as an inverse Gamma$(\frac{1}{2})$ random variable. Therefore we recover exactly the $H^{2|2}$-model on the reduced graph $\mathcal{G}'$.
\end{proof}

\begin{rema}
As the first part of this proof is deterministic, one can derive this using linear algebra, and~\eqref{eq:Beta_CoarseGrain_Laplace} uses the specific distribution~\eqref{eq:Distribution_Beta} of the $H^{2|2}$-model to show that the coarse-grained model remains a $H^{2|2}$-model on the reduced graph. The algebraic procedure gives yet another proof of the result in~\cite{MR4517733}. However, the random walk expansion expression is also useful in establishing fine probabilistic estimates (see e.g.~\cite{Wang:2025aa}), and it is healthy to keep both proofs in mind in the sequel.
\end{rema}

\begin{proof}[Alternative proof of the first part of Theorem~\ref{th:CoarseGraining}]\label{proof:Alternative_CoarseGraining}
Let us indicate how to perform the algebraic computation in the above remark. Recall the representation~\eqref{eq:Form_H} of the random Schrödinger operator $H$ in the canonical basis $(e_k)_{k\in V}$. Perform an orthogonal change of basis in the subspace $E_u$ spanned by $(e_i)_{i\in U}$ by prescribing a new set of basis of $E_u$ with one component being $e_u=\frac{1}{|U|}\sum_{i\in U}e_i$. Consider the subspace $E'$ spanned by the reduced basis $(e_k)_{k\in V\setminus U}$ and $e_u$, and notice that the matrice $G'$ defined with~\eqref{eq:G'_G} is the restriction of $G$ to this subspace; more precisely, we define $G'x=P_{E'}(Gx)$ for $x\in E'$ where $P_{E'}$ is the orthogonal projection to $E'$. In particular, one checks that
\begin{equation*}
    G'(u,u)=\langle e_u,Ge_u\rangle=\frac{1}{|U|^2}\sum_{i,j\in U}\langle e_i,Ge_j\rangle=\frac{1}{|U|^2}G(i,j)
\end{equation*}
as in~\eqref{eq:G'_G}.

Now $H$ acts on $e_k$ with $k\in V\setminus U$ as
\begin{equation*}
    H(e_k)=-\sum_{i\in U}W_{ki}e_i+\sum_{l\neq u}H(k,l)e_l=-W'_{ku}e_u+\sum_{l\neq u}H(k,l)e_l\in E'
\end{equation*}
by the indistinguishability from outside assumption on $U$, so that $G'(-W'_{ku}e_u+\sum_{l\neq u}H(k,l)e_l)=P_{E'}(e_k)=e_k$. This implies that $H'(e_k)=-W'_{ku}e_u+\sum_{l\neq u}H(k,l)e_l$. Since $H'$ is a symmetric matrix, this determines all coefficients in $H'$ except for one, namely $H(u,u)=2\beta'_u$. To determine $H(u,u)$, use the Sherman-Morrison-Woodbury formula (or the Schur complement formula) for block matrix inversion which yields, with $1_{U}(i)=\mathbf{1}_{\{i\in U\}}$,
\begin{equation*}
    H(u,u)=\frac{|U|^2}{\langle 1_{U},(H_{U,U})^{-1}1_{U}\rangle}.
\end{equation*}

Therefore $H'=(G')^{-1}$ is of the form~\eqref{eq:Form_H}, especicially with~$\beta'_u$ as defined in~\eqref{eq:Definition_Beta'u}. This provides an alternative to the first part of the proof of Theorem~\ref{th:CoarseGraining}.
\end{proof}

\subsection{Fine-graining procedure}\label{subse:fine_graining_procedure}
We now state the key observation in this paper, that one can explicitly describe the inverse of the coarse-graining procedure above, which we call the fine-graining property of the $H^{2|2}$-model. We start with the reduced graph $\mathcal{G}'=(V',E',W')$ obtained by replacing a subset $U\subset V$ indistinguishable from outside by a single vertex $u$ with reduced weights~\eqref{eq:ReducedEdgeWeights}.
\begin{theo}\label{th:FineGrainBeta}
Consider a finite weighted graph $\mathcal{G}$ and let $\mathcal{G}'$ be its reduced graph obtained via a subset $U\subset V$ that is indistinguishable from outside. Consider the $H^{2|2}$-model (with boundary condition $\eta'$) on the reduced graph $\mathcal{G}'$ given by its random $\beta'$-field $(\beta'_k)_{k\in V'}$. Conditionally on $(\beta'_{k})_{k\in V'}$, one can sample the random vector $(\beta_k)_{k\in V}$ by introducing additional randomness and reconstruct the $H^{2|2}$-model (with boundary condition $\eta$ s.t. $\eta_{i}=\eta'_{i}$ for $i\notin U$ and $\eta_{i}= \frac{1}{|U|} \eta'_{u}$ for $i\in U$) on the original graph $\mathcal{G}$ explicitly.

Especially, the random vector $(\beta_i)_{i\in U}$ has an explicit conditional density as a function of $\{\beta'_k\}_{k\in V}$.
\end{theo}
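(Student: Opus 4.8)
The plan is to compute the conditional density directly from the explicit joint density \eqref{eq:Distribution_Beta} by disintegrating it along the deterministic coarse-graining map supplied by Theorem~\ref{th:CoarseGraining}. By \eqref{eq:nu-eta-is-marginal} it suffices to treat the case $\eta'=0$ (hence $\eta=0$), the general case following by adding a ghost vertex $\delta$ as in the proof of Theorem~\ref{th:CoarseGraining}. Recall from that theorem that the map $(\beta_i)_{i\in V}\mapsto(\beta'_k)_{k\in V'}$ is the identity on $U^c:=V\setminus U$, namely $\beta'_k=\beta_k$ for $k\notin U$, while the single new coordinate is $\beta'_u=g\big((\beta_i)_{i\in U}\big)$, where $g$ is read off from \eqref{eq:Definition_Beta'u}, i.e. $\tfrac{1}{2\beta'_u}=\tfrac{1}{|U|^2}\langle 1_U,(H_{U,U})^{-1}1_U\rangle$. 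Thus $(\beta_k)_{k\notin U}$ is copied verbatim, and the entire content of fine-graining is the conditional law of the internal block $(\beta_i)_{i\in U}$ given the scalar $\beta'_u$.

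The key algebraic input is a factorization of the ingredients of \eqref{eq:Distribution_Beta} across the $U$/$U^c$ splitting, made possible by indistinguishability. Writing $H$ in block form with $H_{U^c,U}(k,i)=-W_{ki}=-W'_{ku}/|U|$, the off-diagonal block is rank one in the $U$-direction, $H_{U^c,U}=-\tfrac{1}{|U|}w'\,1_U^{T}$ with $w'=(W'_{ku})_{k\in U^c}$. A Schur-complement computation then shows that the Schur complement of $H_{U,U}$ in $H$ coincides with the Schur complement of the $u$-entry in $H'$, yielding at once the positivity factorization $\{H>0\}=\{H_{U,U}>0\}\cap\{H'>0\}$ and the determinant identity
\begin{equation*}
    \det H=\frac{\det H_{U,U}}{2\beta'_u}\,\det H',\qquad\text{so that}\qquad \frac{1}{\sqrt{\det H}}=\frac{\sqrt{2\beta'_u}}{\sqrt{\det H_{U,U}}\,\sqrt{\det H'}}.
\end{equation*}
Here the quantity $\tfrac{1}{2\beta'_u}=\tfrac{1}{|U|^2}\langle 1_U,H_{U,U}^{-1}1_U\rangle$ appearing in the rank-one update is exactly \eqref{eq:Definition_Beta'u}, consistent with the Sherman--Morrison--Woodbury step in the alternative proof of Theorem~\ref{th:CoarseGraining}. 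For the exponent, dropping the $\beta$-independent edge constant in $\langle 1,H_\beta 1\rangle$ leaves $-\sum_{i\in V}\beta_i$, and the analogous reduction on $\mathcal G'$ gives $-\sum_{k\in V'}\beta'_k$; subtracting, the difference is the pure-$U$ term $-\sum_{i\in U}\beta_i+\beta'_u$.

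Substituting these three factorizations into \eqref{eq:Distribution_Beta} groups the density into a factor depending only on $\beta'=(\beta'_k)_{k\in V'}$, which is precisely proportional to $d\nu^{W'}(\beta')$ with $W'$ as in \eqref{eq:ReducedEdgeWeights}, times a residual internal factor $\mathbf 1_{\{H_{U,U}>0\}}\,e^{-\sum_{i\in U}\beta_i}/\sqrt{\det H_{U,U}}$ multiplied by functions of $\beta'_u$ alone (namely $\sqrt{2\beta'_u}\,e^{\beta'_u}$). Disintegrating $\prod_{i\in U}d\beta_i$ over the level sets of $g$ and absorbing all $\beta'_u$-only factors and the overall constant into normalization, I obtain that conditionally on $\beta'$ the vector $(\beta_i)_{i\in U}$ is supported on the fiber $\{g((\beta_i)_{i\in U})=\beta'_u\}$ with density proportional to $\mathbf 1_{\{H_{U,U}>0\}}\,e^{-\sum_{i\in U}\beta_i}/\sqrt{\det H_{U,U}}$; equivalently, it is the $H^{2|2}$-model on the induced subgraph $(U,W|_{U})$ conditioned on $g=\beta'_u$, depending on $\beta'$ only through $\beta'_u$. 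This is the asserted explicit conditional density.

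The main obstacle is the disintegration step: making the fiber conditioning rigorous requires the co-area formula for the map $g$, which introduces a surface measure $d\mathcal H^{|U|-1}/|\nabla g|$ on $\{g=\beta'_u\}$ and demands that $\nabla g\neq 0$ almost everywhere on the relevant region (guaranteed by $H_{U,U}>0$). One must also check that the resulting conditional normalization is finite and positive for $\nu^{W'}$-almost every $\beta'$, so that the conditional law is well defined, and confirm that the $\beta'$-marginal produced by this factorization agrees with $\nu^{W'}$ --- which is exactly the pushforward statement already established in Theorem~\ref{th:CoarseGraining}, providing an independent consistency check. Extending from $\eta'=0$ to general $\eta'$ via the ghost-vertex device of \eqref{eq:nu-eta-is-marginal} then distributes the boundary mass as $\eta_i=\eta'_u/|U|$ on $U$, matching the statement.
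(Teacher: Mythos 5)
Your proposal is correct, but it follows a genuinely different route from the paper. You factorize the joint density \eqref{eq:Distribution_Beta} directly across the $U$/$U^{c}$ splitting via the Schur complement: the rank-one structure $H_{U^c,U}=-\tfrac{1}{|U|}w'1_U^{T}$ forced by indistinguishability gives the identity of Schur complements, hence $\det H=\tfrac{\det H_{U,U}}{2\beta'_u}\det H'$ and $\{H>0\}=\{H_{U,U}>0\}\cap\{H'>0\}$ (all of which I checked and which are correct), and you then disintegrate over the level sets of $g$ by the co-area formula. The paper instead first identifies the law of the block $(\beta_i)_{i\in U}$ as a member of the multivariate inverse Gaussian family \eqref{eq:MultiInverseGaussian} on the induced subgraph $(U,W|_U)$ with wired boundary $\eta_i=\sum_{k\notin U}W_{ik}$, by matching Laplace transforms via \eqref{eq:LaplaceTransform_MultiInverseGaussian}, and then restricts that law to the slice $\{\beta'_u=F((\beta_i)_{i\in U})\}$; the explicit change of variables with Jacobian that makes the slicing concrete is only carried out later, in the proof of Theorem~\ref{th:ExponentialMartingale} for $|U|=2$. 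The two descriptions of the conditional law agree: your answer is the free $H^{2|2}$-model on $(U,W|_U)$ conditioned on the fiber, the paper's is the wired-boundary version conditioned on the same fiber, and the discrepancy factor $e^{-\frac{1}{2}\langle\eta,\widehat{G}^{U}\eta\rangle}$ is a function of $\beta'_u$ alone (since $\eta$ is constant on $U$), hence constant on each fiber. What your route buys is a fully explicit, self-contained linear-algebra factorization (in the spirit of the paper's alternative proof of Theorem~\ref{th:CoarseGraining}) together with the determinant identity; what the paper's route buys is the identification of the conditional block as a named member of the family \eqref{eq:MultiInverseGaussian}, which is then reused in the martingale computation. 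The one step you leave as a plan rather than a proof is the co-area disintegration, but your justification that $\nabla g\neq 0$ on $\{H_{U,U}>0\}$ is sound (one has $\partial_{\beta_i}\langle 1_U,H_{U,U}^{-1}1_U\rangle=-2((H_{U,U}^{-1}1_U)_i)^2$, which cannot vanish for every $i$), and the normalization issue you raise is indeed settled by the pushforward statement of Theorem~\ref{th:CoarseGraining}, so this is a gap of exposition rather than of substance --- comparable to the paper's own ``restrict to this slice.''
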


\begin{proof}
By \eqref{eq:nu-eta-is-marginal} it is enough to consider the case without boundary condition.

Notice that by Theorem~\ref{th:CoarseGraining}, for $k\notin U$, it suffices to take $\beta_k=\beta'_k$, so that it only remains to study the conditional law of $(\beta_i)_{i\in U}$ given $(\beta'_k)_{k\in V'}$. We start by giving the explicit details in the case where $|U|=2$, i.e. splitting a vertex $u\in V'$ into two vertices indistinguishable from outside, then discuss briefly the general case as it follows in a similar manner.

Assume $|U|=2$. Without loss of generality, suppose that $U$ is indexed by $\{1,2\}$ so that it corresponds to the first two rows and columns in the matrix representation, and that the pinning $i_0$ is indexed last, i.e. represented by the last row and column. Suppose that the vertices $\{1,2\}$ is coarse-grained into one single vertex denoted by $1'$: we now calculate the law of $(\beta_1,\beta_2)$ given $\beta_{1'}$ and $(\beta_{k})_{k\notin\{1,2\}}$.

We can write out the marginal law of $(\beta_1,\beta_2)$ given $(\beta_{k})_{k\notin\{1,2\}}$ since their joint density is explicitly given by~\eqref{eq:Distribution_Beta}. The joint density is given exactly by the multivariate inverse Gaussian distribution of~\eqref{eq:MultiInverseGaussian}, with $W_{12}$ unchanged, $\theta_1=\theta_2=1$ and $\eta_1=\sum_{k\notin U}W_{1k}$, $\eta_2=\sum_{k\notin U}W_{2k}$. This can be easily verified using the Laplace transforms~\eqref{eq:LaplaceTransform_MultiInverseGaussian}. Indeed, if $\lambda_{1,2}>0$, $\lambda_{k}=0$ for $k\notin U$ and $(\beta_1,\beta_2,\dots,\beta_n)$ follows the law of the $H^{2|2}$-model on $V$ then
\begin{equation*}
    \mathbb{E}\left[e^{\langle\lambda,\beta\rangle}\right]=e^{-\sum_{i\sim j}W_{ij}(\sqrt{\lambda_i+1}\sqrt{\lambda_j+1}-1)}\frac{1}{\sqrt{\lambda_1+1}\sqrt{\lambda_2+1}},
\end{equation*}
while for $(\beta_1,\beta_2)$ following the above multivariate inverse Gaussian distribution with parameters $W_{12}$, $\theta_1=\theta_2=1$ and $\eta_1$, $\eta_2$,
\begin{equation*}
    \mathbb{E}\left[e^{\lambda_1\beta_1+\lambda_2\beta_2}\right]=e^{-W_{12}(\sqrt{\lambda_1+1}\sqrt{\lambda_2+1}-1)-\eta_1(\sqrt{\lambda_1+1}-1)-\eta_2(\sqrt{\lambda_2+1}-1)}\frac{1}{\sqrt{\lambda_1+1}\sqrt{\lambda_2+1}}.
\end{equation*}
One checks by inspection that these Laplace transforms are equal with our choice of $\eta_1,\eta_2$. It remains to further condition with respect to the extra information of $\beta_{1'}$ using the conditional density formula: by~\eqref{eq:Definition_Beta'u}, this only depends on $H_{|\{1,2\}}$.

More explicitly, with the boundary condition $\eta=(\eta_1,\eta_2)$ defined in the above paragraph, the condition joint law of $(\beta_1,\beta_2)$ knowing $(\beta'_{k})_{k\neq u}$ is proportional to
\begin{equation*}
    \mathbf{1}_{\{H_{U,U}>0\}}e^{-\frac{1}{2}\left(\langle 1,H_{U,U}1\rangle+\langle\eta,\widehat{G}^{U}\eta\rangle\right)}\frac{d\beta_{1}d\beta_2}{\sqrt{\det H_{U,U}}}
\end{equation*}
according to~\eqref{eq:MultiInverseGaussian}. Therefore the condition joint law of $(\beta_1,\beta_2)$ knowning furthermore $\beta_{1'}$ is obtained by restricting the above distribution to the level set defined by~\eqref{eq:Definition_Beta'u}. An explicit calculation in the case $|U|=2$ will be carried out later in the proof of Theorem~\ref{th:ExponentialMartingale}.

In the general case where $|U|$ is larger, the reasoning follows identical steps. Given $(\beta'_k)_{k\in V'}$, we already know $(\beta_{i})_{i\notin U}$, so that we know that marginal law of $(\beta_{i})_{i\in U}$ by conditioning to $(\beta_{i})_{i\notin U}=(\beta'_{k})_{k\neq u}$, which remains in the family of multivariate inverse Gaussian distributions of~\eqref{eq:MultiInverseGaussian} and whose explicit parameters can be deduced from the exact expression similar to the above Laplace transform calculation. The only extra information is $\beta'_{u}=F((\beta_i)_{i\in U})$, which is an explicit function of $(\beta_{i})_{i\in U}$ due to~\eqref{eq:Definition_Beta'u}, and it suffices to restrict the marginal law above to this slice.
\end{proof}

One can translate the above theorem into a fine-graining property using the random Schrödinger operator $H$ representation (equivalently, the random Green matrix $G=H^{-1}$) of the $H^{2|2}$-model. For $H$ it is immediate since its random data are exactly the diagonal entries, i.e. the random $\beta$-field. We now explain how to perform the fine-graining for the random operator $G=H^{-1}$.
\begin{theo}\label{th:FineGraining_GH}
Let $\mathcal{G}'$ be the reduced graph from $\mathcal{G}$ using a subset $U\subset V$ which is indistinguishable from outside. Consider the $H^{2|2}$-model on the reduced graph $\mathcal{G}'$ given by its random Green matrix $G'$. Conditionally on $G'$, we can sample $G$ on the original graph $\mathcal{G}$ by injecting independent random variables and reconstruct the $H^{2|2}$-model on the original graph $\mathcal{G}=(V,E,W)$.

Furthermore, one can perform the fine-graining procedure in such a way that $G'(k,l)=G(k,l)$ for all $k,l\notin U$.
\end{theo}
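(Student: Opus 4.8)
The plan is to reduce everything to the already-established fine-graining of the $\beta$-field (Theorem~\ref{th:FineGrainBeta}), exploiting that the random data carried by $H$ is exactly its diagonal, i.e.\ the $\beta$-field, and that $G=H^{-1}$ is a deterministic (almost surely well-defined) function of $H$. Concretely, starting from $G'$ I would first recover the reduced random potential $\beta'$: by Theorem~\ref{th:CoarseGraining} together with Remark~\ref{rema:RandomWalkGreen}, $(G')^{-1}$ is almost surely of the form~\eqref{eq:Form_H}, so $\beta'_k=\tfrac12 (G')^{-1}(k,k)$ is well-defined and carries the $H^{2|2}$-law on $\mathcal{G}'$. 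Then I apply Theorem~\ref{th:FineGrainBeta} to sample $(\beta_i)_{i\in V}$ on $\mathcal{G}$ by injecting the additional (conditionally independent given $G'$) randomness for $(\beta_i)_{i\in U}$ prescribed by its conditional density, while setting $\beta_k=\beta'_k$ for $k\notin U$. Finally I set $H=H_\beta$ and $G=H^{-1}$. Since $\beta$ carries the correct $H^{2|2}$-law on $\mathcal{G}$, the matrix $G$ carries the correct Green-matrix law, which establishes the first assertion.

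For the \emph{Furthermore} clause there are two interchangeable routes. \emph{Via coarse-graining as a left inverse:} by construction the fine-graining of Theorem~\ref{th:FineGrainBeta} keeps $\beta_k=\beta'_k$ for $k\notin U$ and samples $(\beta_i)_{i\in U}$ on the slice where the coarse-graining formula~\eqref{eq:Definition_Beta'u} returns the prescribed value $\beta'_u$. Hence applying the coarse-graining map of Theorem~\ref{th:CoarseGraining} to the reconstructed $\beta$ returns exactly $\beta'$, and therefore returns $G'$ as the inverse of $H_{\beta'}$; since coarse-graining leaves the complement entries untouched, i.e.\ $G'(k,l)=G(k,l)$ for $k,l\notin U$ by~\eqref{eq:G'_G}, we conclude $G(k,l)=G'(k,l)$ for all $k,l\notin U$. \emph{Via the direct random-walk expansion:} alternatively, for $k,l\notin U$ one expands $G(k,l)=\sum_{\sigma:k\to l}W_\sigma/(2\beta)_\sigma$ and reorganizes the sum by reduced skeleton exactly as in the proof of Theorem~\ref{th:CoarseGraining}, rewriting it as $\sum_{\sigma':k\to l}W'_{\sigma'}/(2\beta')_{\sigma'}$ with $\beta'$ the coarse-graining of $\beta$, which coincides with the original $\beta'$; the latter sum is precisely $G'(k,l)$.

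The only genuinely nontrivial point, and the step I expect to require the most care, is the consistency that coarse-graining composed with fine-graining is the identity on the reduced model. This amounts to checking that the level-set conditioning defining the fine-graining coupling in Theorem~\ref{th:FineGrainBeta} is exactly compatible with the function $F\big((\beta_i)_{i\in U}\big)$ appearing in~\eqref{eq:Definition_Beta'u}, and that the relevant conditional densities are well-defined and positive so that the disintegration against the slice $\{F=\beta'_u\}$ is meaningful. Everything else, namely the well-definedness of $\beta'$ from $G'$, the correctness of the law of $G$, and the untouched-complement property, is then either immediate or inherited directly from Theorems~\ref{th:CoarseGraining} and~\ref{th:FineGrainBeta}.
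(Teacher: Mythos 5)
Your proposal is correct and follows essentially the same route as the paper: the paper's proof likewise inverts $G'$ pathwise to recover $H'$ and hence $\beta'$, applies Theorem~\ref{th:FineGrainBeta} to sample $(\beta_k)_{k\in V}$, inverts $H_\beta$ to obtain $G$, and justifies $G'(k,l)=G(k,l)$ for $k,l\notin U$ by the same path-expansion argument as for~\eqref{eq:G'_G}. The consistency point you flag (that coarse-graining is a left inverse of the fine-graining) is indeed exactly what the level-set conditioning in Theorem~\ref{th:FineGrainBeta} provides, so there is no gap.
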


\begin{proof}
For simplicity, we give the detailed proof in the case $|U|=2$ that we use in the sequel, and the general case follows similarly.

Without loss of generality, suppose that $U=\{1,2\}$, so that we should perform the following reconstruction:
\begin{equation*}
\begin{pNiceArray}{c|ccc}
 G(1',1') & G(1',3) & \cdots & G(1',n) \\ \hline
  & G(3,3) & \cdots & G(3,n) \\
  &  & \ddots & \vdots \\
  &  &  & G(n,n) \\
\end{pNiceArray}
~\longrightarrow~
\begin{pNiceArray}{cc|ccc}
 G(1,1) & G(1,2) & G(1,3) & \cdots & G(1,n) \\
 G(2,1) & G(2,2) & G(2,3) & \cdots & G(2,n) \\ \hline
  &  & G(3,3) & \cdots & G(3,n) \\
  &  &  & \ddots & \vdots \\
  &  &  &  & G(n,n) \\
\end{pNiceArray}.
\end{equation*}
The coeffients with $i,j\notin U$ are the same by~\eqref{eq:G'_G}. The reconstruction can be done by inverting deterministically (i.e. pathwise, see Remark~\ref{rema:AlmostSure+Simultaneous}) $G'$ to get $H'$ and thus $(\beta'_k)_{k\in V'}$, then use Theorem~\ref{th:FineGrainBeta} to sample $(\beta_k)_{k\in V}$ and thus $H$, then invert $H$ to get $G$.

The fact that $G'(k,l)=G(k,l)$ for $k,l\notin U$ follows from the same argument for~\eqref{eq:G'_G} via the path expansion.
\end{proof}

The above description is a bit abstract and we now indicate the practical implementation of the above algorithm. We first record an auxiliary observation on the proportionality of rows/columns of the fine-grained matrix $G$, using the path expansion interpretation explained in Section~\ref{subse:green_function_and_random_walk_expansion_of_the_h_2_2}.
\begin{prop}\label{prop:Proportionality_G}
In the setting of Theorem~\ref{th:FineGraining_GH}, the random Green matrix $G$ on the fine-grained graph $\mathcal{G}=(V,E,W)$ has the following proportionality property: for all $k,l\notin U$ and all $i,j\in U$,
\begin{equation*}
    \frac{G(i,k)}{G(j,k)}=\frac{G(i,l)}{G(j,l)}=\frac{\sum_{v\in U}G(v,i)}{\sum_{v\in U}G(v,j)}.
\end{equation*}
\end{prop}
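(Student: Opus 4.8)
The plan is to read everything off the random walk expansion of $G$ recalled in Section~\ref{subse:green_function_and_random_walk_expansion_of_the_h_2_2}, using indistinguishability of $U$ in the one crucial place where it forces a factorization. Fix $i\in U$ and $k\notin U$ and decompose every path $\sigma:i\to k$ according to its \emph{first exit} from $U$: such a path runs inside $U$ from $i$ to some last interior vertex $w\in U$, takes a single step $w\to w'$ to an exterior vertex $w'\notin U$, and then follows an arbitrary path from $w'$ to $k$ in the full graph. Each interior vertex contributes its $2\beta$ factor exactly once and the crossing edge contributes $W_{w,w'}$, so the weight of $\sigma$ splits as the product of the weights of the three pieces. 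Summing the interior pieces produces $\widehat{G}^{U}(i,w)$ and summing the continuations produces $G(w',k)$, giving
\begin{equation*}
    G(i,k)=\sum_{w\in U}\widehat{G}^{U}(i,w)\sum_{w'\notin U}W_{w,w'}\,G(w',k).
\end{equation*}

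Here indistinguishability enters decisively: $W_{w,w'}$ is independent of $w\in U$, so the inner sum over $w'$ does not depend on $w$ and the double sum factorizes as $A(i)\,C(k)$, where $A(i):=\sum_{w\in U}\widehat{G}^{U}(i,w)$ depends only on $i$ and $C(k):=\sum_{w'\notin U}W_{w,w'}G(w',k)$ (well defined by indistinguishability) depends only on $k$. This single factorization already yields the first equality, since $G(i,k)/G(j,k)=A(i)/A(j)$ is manifestly independent of $k\notin U$ and therefore agrees for $k$ and $l$.

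For the last equality I would compute $\sum_{v\in U}G(v,i)$ for $i\in U$. Splitting $G(v,i)$ into paths that stay inside $U$, contributing $\widehat{G}^{U}(v,i)$, and paths that leave $U$, decomposed again by first exit and evaluated via $G(w',i)=G(i,w')=A(i)C(w')$ from the previous step, gives $G(v,i)=\widehat{G}^{U}(v,i)+A(v)A(i)B$ with $B:=\sum_{w'\notin U}W_{w,w'}C(w')$ independent of $v,i$. Summing over $v\in U$ and using the symmetry $\sum_{v\in U}\widehat{G}^{U}(v,i)=A(i)$ collapses this to $\sum_{v\in U}G(v,i)=A(i)\bigl(1+B\sum_{v\in U}A(v)\bigr)$, whose bracket is independent of $i$. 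Hence $\sum_{v\in U}G(v,i)\big/\sum_{v\in U}G(v,j)=A(i)/A(j)$, matching the common value of the first two ratios.

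The routine bookkeeping is checking that the $2\beta$ and $W$ factors split across the first-exit decomposition without double-counting the two boundary vertices $w$ and $w'$; this is immediate once one remembers that each visited vertex carries exactly one $2\beta$ factor. The one genuinely load-bearing step is the factorization $G(i,k)=A(i)C(k)$, and the point to get right is that indistinguishability is precisely what decouples the interior exit vertex $w$ from the exterior continuation, which is what lets the sum factor; everything else follows from this together with the symmetry of $\widehat{G}^{U}$.
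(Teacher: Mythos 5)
Your proof is correct. It rests on the same two ingredients as the paper's argument --- the random walk expansion of $G$ and the observation that indistinguishability makes the crossing weight $W_{w,w'}$ independent of the endpoint $w\in U$ --- but you slice the paths at the dual place. The paper decomposes a path $k\to i$ (outside to inside) at its \emph{first entrance} into $U$, obtaining $G(k,i)=C(k)\sum_{\circ\in U}G(\circ,i)$ directly in terms of the full Green matrix; all three ratios in the statement then fall out of this single identity, since $G(k,i)/\sum_{\circ\in U}G(\circ,i)$ is manifestly independent of $i$. You instead decompose $i\to k$ at its \emph{first exit} from $U$, which yields the factorization $G(i,k)=A(i)C(k)$ with $A(i)=\sum_{w\in U}\widehat{G}^{U}(i,w)$ expressed through the interior Green function $\widehat{G}^{U}=(H_{U,U})^{-1}$. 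This gives the first equality at once, but identifying the common ratio with $\sum_{v\in U}G(v,i)/\sum_{v\in U}G(v,j)$ then costs you a second decomposition (the identity $G(v,i)=\widehat{G}^{U}(v,i)+A(v)A(i)B$ together with the symmetry of $\widehat{G}^{U}$), which the paper's choice of decomposition renders unnecessary. In exchange, your route makes explicit how $G$ restricted to $U\times U$ is built from $\widehat{G}^{U}$ plus a rank-one correction, which anticipates the reconstruction formula recorded in the remark following the proposition; both arguments are complete and the bookkeeping of the $2\beta$ and $W$ factors across the splitting point is handled correctly.
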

\begin{proof}
Recall the path expansion argument in the proof of Theorem~\ref{th:CoarseGraining} and consider $G(k,i)$ with $k\notin U$ and $i\in U$. Consider a path $\sigma$ from $k$ to $i$ in the large graph $\mathcal{G}$, and let $\circ$ be the first vertex the path $\sigma$ enters the indistinguishable subset $U$. Since $U$ is indistinguishable from outside, the distribution of $\circ$ is uniform over $U$, and the path expansion yields
\begin{equation*}
    G(k,i)=\sum_{\circ\in U}C(k,\circ)G(\circ,i).
\end{equation*}
where $C(k,\circ)$ is the contribution from paths starting from $k$ and stopped at the first time hitting the vertex $\circ$, i.e.
\begin{equation*}
    C(k,\circ)=2\beta_{\circ}\times \sum\limits_{\substack{\sigma:k\to \circ\\\text{killed at}~\circ}}\frac{W_{\sigma}}{(2\beta)_{\sigma}}.
\end{equation*}
By the indistinguashability, $C(k,\circ)$ is independent of $\circ\in U$ (notice that it only depends on the $\beta$-field outside $U$ and only one $W$-factor uses vertices in $U$). Furthermore, the same analysis applied to the smaller graph $\mathcal{G}'$ yields that this factor $C(k,\circ)$ is equal to $\frac{1}{|U|}\frac{G'(k,u)}{G'(u,u)}$, with $G'$ the given Green matrix on the reduced graph $\mathcal{G}'$ where $U$ is reduced to the single vertex $u$. Combining these observations we get that
\begin{equation*}
    \frac{G(k,i)}{\sum_{\circ\in U}G(\circ,i)}
\end{equation*}
is independent of the choice of $i\in U$, hence the desired result.
\end{proof}

\begin{rema}
When implementing the algorithm of Theorem~\ref{th:FineGraining_GH} to fine-grain the matrix $G'$ into $G$, one does not have to invert the full matrices of size $|V|$, but only a small submatrix of size $|U|$. We illustrate this with a practical example in the case $|U|=2$ above, say with $U=\{1,2\}$ and $u=1'$. By Remark~\ref{rema:AlmostSure+Simultaneous}, all coefficients $H(i,j)$ and $G(i,j)$ are known if $i,j\notin U$. To complete the matrix $G$, one only needs to sample the submatrix
\begin{equation*}
    \begin{pmatrix}G(1,1)&G(1,2)\\G(2,1)&G(2,2)\end{pmatrix}.
\end{equation*}
Indeed, knowing this submatrix, it suffices to fill in the rest of the first two columns to reconstruct the full matrix $G$. But this procedure is deterministic, as Proposition~\ref{prop:Proportionality_G} shows that the knowledge of the $2\times 2$ matrix above determines all quotients $\frac{G(1,k)}{G(2,k)}$ for all $k\notin\{1,2\}$. Together with the condition that $G(1,k)+G(2,k)=2G'(1',k)$ by~\eqref{eq:G'_G}, this determines all coefficients $G(1,k)$ and $G(2,k)$ with $k\notin\{1,2\}$.

It remains to sample the small $2\times 2$ matrix in the above display: one samples first $\widehat{G}^{U}=(H_{U,U})^{-1}$ as in~\eqref{eq:Definition_Beta'u} using the explicit law of $(\widehat{G}(1,1),\widehat{G}(1,2),\widehat{G}(2,2))$ conditionally on $\frac{1}{2\beta'_u}=\frac{1}{4}(\widehat{G}(1,1)+2\widehat{G}(1,2)+\widehat{G}(2,2))$ as in the proof of Theorem~\ref{th:FineGrainBeta}. More precisely, Theorem~\ref{th:FineGrainBeta} shows how to sample $(\beta_1,\beta_2)$ from the data of $(\beta'_k)_{k\in V'}$, so this determines the small matrix $H_{U,U}$ which we invert to sample $\widehat{G}^{U}$. Then a similar path expansion argument as in the previous proof of Proposition~\ref{prop:Proportionality_G} allows one to reconstruct $G_{U,U}$ from the matrix $\widehat{G}^{U}$. More precisely, consider a path $\sigma$ from $i\in\{1,2\}$ to $j\in\{1,2\}$. There are two possibilities: either $\sigma$ stays inside $U=\{1,2\}$, in which case the path expansion contribution is $\widehat{G}^{U}(i,j)$; or that $\sigma$ goes outside of $U$, in which case we denote by $\circ$ the last vertex in $U$ it visits before going out of $U$, and $\bullet$ the last time the path $\sigma$ returns to $U$. Similar considerations as in the above proof yield
\begin{equation*}
    G(i,j)=\widehat{G}^{U}(i,j)+\sum_{\circ,\bullet\in U}\widehat{G}^{U}(i,\circ)\frac{\beta_\circ}{\beta'_u}\frac{1}{|U|}G'(u,u)\frac{1}{|U|}\frac{\beta_{\bullet}}{\beta'_u}\widehat{G}^{U}(\bullet,j).
\end{equation*}
The gives an explicit expression of the random Green matrix $G$ in terms of the given reduced Green matrix $G'$ and the small random matrix $\widehat{G}^{U}$ that we sampled above. For example, with the subset $U=\{1,2\}$ reduced to the vertex $u=1'$,
\begin{equation*}
    G(1,2)=\widehat{G}^{U}(1,2)+\frac{1}{4}G'(1',1')\sum_{\circ,\bullet\in\{1,2\}}\frac{\beta_\circ \beta_\bullet}{(\beta'_{1'})^2}\widehat{G}^{U}(1,\circ)\widehat{G}^{U}(\bullet,2),
\end{equation*}
where especially $\frac{\beta_\circ \beta_\bullet}{(\beta'_{1'})^2}$ is also a function of $H_{U,U}$ only, thus involves only the inversion of the small matrix $\widehat{G}^{U}=(H_{U,U})^{-1}$.

Notice that the above procedure avoids the full inversion of the large $|V|\times |V|$ matrix, which reduces significantly the complexity in implementing the fine-graining algorithm for the random Green matrix established in Theorem~\ref{th:FineGraining_GH}.
\end{rema}

\subsection{Wired boundary coupling and infinite graph}\label{subse:wired_boundary_condition_and_infinite_graph}
In later applications, we need to consider the $H^{2|2}$-model on infinite graphs, so we now explain how to define this model as the thermodynamical limit of $H^{2|2}$-models on finite subgraphs with wired boundary conditions.

From now on, $\mathcal{G}=(V,E,W)$ is no longer assumed finite. If $U$ is a finite subset of $V$, one can define the graph $\mathcal{G}^{\widetilde{U}}=(\widetilde{U},\widetilde{E},\widetilde{W})$ on $\widetilde{U}=U\cup\{\delta\}$ with an additional vertex $\delta$ by setting
\begin{equation*}
\begin{cases}
    \widetilde{W}_{ij}=W_{ij} & i,j\in U \\ \widetilde{W}_{i,\delta}=\sum_{j:j\notin U}W_{ij} & i\in U.
\end{cases}
\end{equation*}

The graph $\mathcal{G}^{\widetilde{U}}$ being finite, one can consider the $H^{2|2}$-model on $\mathcal{G}^{\widetilde{U}}$ as in the previous sections. The law of the components $(\beta_{i})_{i\in U}$ of the \(H^{2|2}\)-model on $\mathcal{G}^{\widetilde{U}}$ is the \(H^{2|2}\)-model on the graph induced by $U$ with boundary condition $\eta_{i}= \widetilde{W}_{i,\delta}$, called the wired boundary condition. 

Take $(U_n)_{n\geq 0}$ an increasing exhausting sequence of subsets of $V$, then~\cite[Section~4]{MR3904155} shows that the $\beta$-field (of the \(H^{2|2}\)-model) on the infinite graph $\mathcal{G}$ can be defined as the Kolmogorov extension of the $\beta$-fields on $U_n$ with wired boundary condition. In particular, the distribution of the $\beta$-field on the infinite graph $\mathcal{G}$ is completely characterized by its finite dimensional marginals.

Such definition of the $\beta$ field is unique and can be done whenever all the \(W_{i} = \sum_{j\in V}W_{ij} \) are finite, even when the infinite graph is not locally finite.

However, the equivalence between $\beta$ field and $u$-field as described in \eqref{eq:RelationBeta_u} is no longer valid in the infinite graph setting. On an infinite graph, thermodynamic limit of the $u$-field is not always unique. There is a canonical choice of such limit as described in ~\cite{MR3904155}, using the wired boundary coupling, we recall such coupling in the case of hierarchical lattice in Section \ref{sec-wired-bd-hier}.

The situation is, however, more subtle when considering the fine-mesh scaling limit. We will examine this in detail in Section~\ref{sec:continuous_space_scaling_limit_of_the_hiarachical_h_2_2} for the case of the $H^{2|2}$-model on the Dyson hierarchical lattice. Indeed, we are required to use a coupling that is fundamentally different from the wired boundary one. To prepare for this, we introduce a necessary martingale property in the next subsection.

\subsection{A one-step martingale property}
This subsection follows the spirit of~\cite[Section~5]{MR3904155}, albeit with a different graph enlargement and consequently a different filtration: to establish the martingale property, it is convenient to start by proving a stronger exponential martingale property using Laplace transform formulas.
\begin{theo}\label{th:ExponentialMartingale}
Consider the $H^{2|2}$-model on a finite graph $\mathcal{G}=(V,E,W)$, and let $U\subset V$ be indistinguishable from outside. Define the random $\beta'$-field on the reduced graph $V'=(V\setminus U)\cup\{u\}$ with the coarse-graining procedure defined in Section~\ref{subse:coarse_graining_procedure}, and let $\mathcal{F}'$ be the $\sigma$-algebra generated by $(\beta'_k)_{k\in V'}$. Let $\eta\in\mathbb{R}^{V}_{+}$ be a vector such that $\eta_i=\eta_j$ for all $i,j\in U$. Then
\begin{equation*}
    \forall \lambda\in\mathbb{R}^{V}_+,\quad \mathbb{E}\left[e^{-\langle\lambda,G\eta\rangle-\frac{1}{2}\langle\lambda,G\lambda\rangle}~|~\mathcal{F}'\right]=e^{-\langle\widehat{\lambda},G'\widehat{\eta}\rangle-\frac{1}{2}\langle\widehat{\lambda},G'\widehat{\lambda}\rangle}
\end{equation*}
where we set $\widehat{\lambda}_{i}=\lambda_i, \widehat{\eta}_i=\eta_i$ if $i\ne u$, and $\widehat{\lambda}_u=\frac{1}{\sqrt{|U|}}\sum\limits_{k\in U}\lambda_{k}$, $\widehat{\eta}_u=\frac{1}{\sqrt{|U|}}\sum\limits_{k\in U}\eta_{k}$.
\end{theo}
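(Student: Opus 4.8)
The plan is to read the integrand as a Radon--Nikodym derivative and push it through the coarse-graining map. Completing the square in the density~\eqref{eq:MultiInverseGaussian} (with $\theta=1$) gives the pathwise identity
\begin{equation*}
e^{-\langle\lambda,G\eta\rangle-\frac12\langle\lambda,G\lambda\rangle}=e^{-\langle 1,\lambda\rangle}\,\frac{d\nu^{W,1,\eta+\lambda}}{d\nu^{W,1,\eta}}(\beta),
\end{equation*}
because replacing $\eta$ by $\eta+\lambda$ turns $-\tfrac12\langle\eta,G\eta\rangle$ into $-\tfrac12\langle\eta,G\eta\rangle-\langle\lambda,G\eta\rangle-\tfrac12\langle\lambda,G\lambda\rangle$ while shifting the linear term $\langle 1,\eta\rangle$ by $\langle 1,\lambda\rangle$, the prefactors and $\det H$ being unchanged. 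Integrating already yields the unconditional value $e^{-\langle 1,\lambda\rangle}$, a useful consistency check. Conditioning then projects the R--N derivative onto $\mathcal F'$, so that
\begin{equation*}
\mathbb E\bigl[e^{-\langle\lambda,G\eta\rangle-\frac12\langle\lambda,G\lambda\rangle}\mid\mathcal F'\bigr]=e^{-\langle 1,\lambda\rangle}\,\frac{d(\pi_{*}\nu^{W,1,\eta+\lambda})}{d(\pi_{*}\nu^{W,1,\eta})}(\beta'),
\end{equation*}
where $\pi$ is the map $\beta\mapsto\beta'$. The denominator is immediate from Theorem~\ref{th:CoarseGraining}: since $\eta$ is constant on $U$, $\pi_{*}\nu^{W,1,\eta}=\nu^{W',1,\eta'}$.

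To compute the numerator pushforward I would pass to the orthonormal basis of $\ell^2(V)$ that keeps $(e_k)_{k\notin U}$, adds the symmetric unit vector $\hat e_u=|U|^{-1/2}\sum_{i\in U}e_i$, and completes with vectors $f_1,\dots,f_{|U|-1}$ spanning $\{v:\operatorname{supp}v\subset U,\ \sum_{i\in U}v_i=0\}$. Indistinguishability gives $\langle e_k,Hf_j\rangle=-\sum_{i\in U}(f_j)_iW_{ki}=0$ for $k\notin U$, so $H$ block-decouples the internal antisymmetric directions $f_j$ from the exterior. The reduced operator on $\operatorname{span}\bigl((e_k)_{k\notin U},\hat e_u\bigr)$ is $H'$ up to the diagonal rescaling identifying $\hat e_u$ with $u$, and in these coordinates $\widehat\lambda,\widehat\eta$ are precisely the coordinates of the orthogonal projections of $\lambda,\eta$ onto this reduced subspace; the $|U|^{-1/2}$ weights are the coefficients of $\hat e_u$, and $\widehat\eta$ has no $f_j$-component because $\eta$ is constant on $U$.

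Solving $Hx=\eta$ and $Hy=\lambda$ by the Schur complement with respect to the $f$-block, the quadratic forms split as
\begin{equation*}
\langle\lambda,G\eta\rangle+\tfrac12\langle\lambda,G\lambda\rangle=\bigl(\langle\widehat\lambda,G'\widehat\eta\rangle+\tfrac12\langle\widehat\lambda,G'\widehat\lambda\rangle\bigr)+R,
\end{equation*}
where the bracket is $\mathcal F'$-measurable and equals the claimed right-hand side exponent, while $R$ collects the remaining dependence on the internal block $H_{U,U}$ (through the scalars $\gamma^{\top}M^{-1}d$ and $\gamma^{\top}M^{-1}\gamma$, with $\gamma$ the $f$-components of $\lambda$ and $d,M$ the $\hat e_u$--$f$ and $f$--$f$ parts of $H$). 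By Theorem~\ref{th:FineGrainBeta}, conditionally on $\mathcal F'$ the matrix $H_{U,U}$ is an $H^{2|2}$-model on the subgraph induced by $U$, with the constant wired boundary produced by indistinguishability, restricted to the level set on which its Schur complement along $\hat e_u$ equals the $\beta'_u$-datum; one then evaluates $\mathbb E[e^{R}\mid\mathcal F']$ against this law using the explicit Laplace transform~\eqref{eq:LaplaceTransform_MultiInverseGaussian} on $U$.

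This last integration is the step I expect to be the main obstacle. Because the tilt by $\lambda$ breaks the $U$-symmetry, $\eta+\lambda$ is no longer indistinguishable from outside, so Theorem~\ref{th:CoarseGraining} cannot be invoked for the numerator and $\mathbb E[e^{R}\mid\mathcal F']$ must be computed directly. The lever I would use is that the conditional law of $H_{U,U}$ is invariant under the orthogonal maps of $\operatorname{span}(e_i)_{i\in U}$ fixing $\hat e_u$ (the permutations of $U$), which forces the odd $f$-moments to vanish; the surviving even contributions are then resummed through~\eqref{eq:LaplaceTransform_MultiInverseGaussian} and must be shown to combine with the leading $\mathcal F'$-measurable bracket into exactly $e^{-\langle\widehat\lambda,G'\widehat\eta\rangle-\frac12\langle\widehat\lambda,G'\widehat\lambda\rangle}$. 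Establishing that the internal integral contributes only through the symmetric direction $\hat e_u$, thereby reproducing the $|U|^{-1/2}$ normalisation and no residual antisymmetric dependence, is the technical heart of the argument; the $|U|=2$ case, where $\gamma$ and the internal data are one-dimensional, is the natural place to carry out the explicit computation before treating general $|U|$.
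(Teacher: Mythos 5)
Your setup is sound and is essentially the paper's: the identity $e^{-\langle\lambda,G\eta\rangle-\frac12\langle\lambda,G\lambda\rangle}=e^{-\langle 1,\lambda\rangle}\,\tfrac{d\nu^{W,1,\eta+\lambda}}{d\nu^{W,1,\eta}}(\beta)$ is correct, the reduction of the conditional expectation to a ratio of pushforwards is legitimate, the identification $\pi_{*}\nu^{W,1,\eta}=\nu^{W',1,\eta'}$ follows from Theorem~\ref{th:CoarseGraining}, and the orthogonal splitting of $\ell^2(V)$ into the symmetric direction $\hat e_u$ and the antisymmetric directions $f_j$ (which decouple from the exterior by indistinguishability) together with the Schur complement is exactly the paper's decomposition~\eqref{eq:QuadForm_G}. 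The difficulty is that you stop precisely where the proof begins: everything up to the sentence ``$\mathbb E[e^{R}\mid\mathcal F']$ must be computed directly'' is a correct restatement of the problem, and the remaining claim --- that the residual conditional expectation equals $1$ --- is asserted, not proved.

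Moreover, the mechanism you propose for closing it (permutation symmetry of the conditional law of $H_{U,U}$ kills the odd $f$-moments, and the even contributions are resummed through~\eqref{eq:LaplaceTransform_MultiInverseGaussian}) is not the mechanism that actually works, and there is no indication it can be made to. In the paper's $|U|=2$ computation the residual factor involves $\epsilon^{12}=\mathrm{sgn}(\beta_1-\beta_2)$, and the conditional expectation is shown to equal $1$ \emph{for each fixed value} of $\epsilon^{12}$; no cancellation between the two signs (i.e.\ no parity argument in the antisymmetric direction) is used. What closes the argument is an exact change of variables $(\beta_1,\beta_2)\mapsto(\check\beta_{1'},\beta'_{1'})$ with $2\check\beta_{1'}=\beta_1+\beta_2+W_{12}$, whose Jacobian $\Xi^{-1}$ is a function of $(\check\beta_{1'},\beta'_{1'})$ alone, followed by the observation that $e^{R}$ is precisely the Radon--Nikodym derivative between the conditional density of $\check\beta_{1'}$ given $H'$ under $\nu^{W,1,\eta+\lambda}$ and the one under $\nu^{W,1,\eta}$ (both read off from~\eqref{eq:ReducedMultivariateDensity}); the two normalising constants coincide because both ambient measures are probability measures. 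This single-variable change-of-measure identity is the content of the theorem, and your proposal neither discovers it nor supplies a substitute, so as written the argument has a genuine gap at its central step.
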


\begin{proof}
We give the detailed proof in the case $|U|=2$, which is the case that we need in the sequel, and the general case is similar. Without loss of generality, supppose that $U=\{1,2\}$ is represented by the first two components of $V=\{1,2,\dots,n-1,\infty\}$ and the pinning point is $\infty$ and is represented by the last component in the matrix representation of the random Schrödinger operator $H$, so that
\begin{equation*}
   H = \begin{pmatrix}
    2\beta_1 & -W_{12} & \cdots & \cdots & -W_{1\infty} \\
    -W_{12} & 2 \beta_2 & \ddots &  \ddots & -W_{2\infty} \\
    \vdots & \ddots & 2\beta_3 & \ddots & \vdots \\
    \vdots &  \ddots & \ddots & \ddots & \vdots \\
    -W_{1\infty} & -W_{2\infty} & \cdots & \cdots & 2\beta_\infty \\
    \end{pmatrix}
\end{equation*}
and $G=H^{-1}$ the associated random Green matrix. By~\eqref{eq:G'_G}, the coarse-grained random Green matrix $G'$ is obtained from averaging, so we define the symmetric matrix
\begin{equation*}
   P = \begin{pNiceArray}{cc|ccc}
    -\frac{1}{\sqrt{2}} & \frac{1}{\sqrt{2}} & 0 & \cdots & 0 \\
    \frac{1}{\sqrt{2}} & \frac{1}{\sqrt{2}} & 0 & \cdots & 0 \\ \hline
    0 & 0 &  &  &  \\
    \vdots & \vdots &  & I_{n-2} &  \\
    0 & 0 &  &  &  \\
    \end{pNiceArray}=P^{-1}=P^{t}
\end{equation*}
in such a way that\footnote{Compared to the alternative algebraic proof to Theorem~\ref{th:CoarseGraining} above, the change of basis is off by a multiplicative constant $\sqrt{2}$, in such a way that we don't need extra notations and normalizing constants for the inverse of $P$.}
\begin{equation*}
    \widetilde{G}=PGP=\begin{pmatrix}B & A^{t} \\ A & G'\end{pmatrix}
\end{equation*}
where $G'$ is the coarse-grained Green matrix $G'$ and $B\in\mathbb{R}$ whose value is unimportant.

Consider now $H'=(G')^{-1}$ the coarse-grained random Schrödinger operator on the reduced graph, and by Section~\ref{subse:coarse_graining_procedure} it is represented by the matrix of the form~\eqref{eq:Form_H} with diagonal entries $(2\beta'_{1'},2\beta_3,\dots,2\beta_{\infty})$, where
\begin{equation*}
    \frac{1}{2\beta'_{1'}}=\frac{1}{4}\frac{2\beta_1+2\beta_2+2W_{12}}{4\beta_1\beta_2-(W_{12})^2}
\end{equation*}
by~\eqref{eq:Definition_Beta'u} and $W'$ is the reduced weights~\eqref{eq:ReducedEdgeWeights}. This matrix is naturally associated to the the matrix $\widetilde{H}=(\widetilde{G})^{-1}=PHP$ via the Schur complement formula:
\begin{equation*}
    \widetilde{G}=\begin{pmatrix}1 & A^{t}H' \\ 0 & I_{n-1}\end{pmatrix}\begin{pmatrix}\frac{1}{2\check{\beta}_{1'}} & 0 \\ 0 & G'\end{pmatrix}\begin{pmatrix}1 & 0 \\ H'A & I_{n-1}\end{pmatrix}
\end{equation*}
and
\begin{equation*}
    \widetilde{H}=\begin{pmatrix}1 & 0 \\ -H'A & I_{n-1}\end{pmatrix}\begin{pmatrix}2\check{\beta}_{1'} & 0 \\ 0 & H'\end{pmatrix}\begin{pmatrix}1 & -A^{t}H' \\ 0 & I_{n-1}\end{pmatrix}
\end{equation*}
where we defined $2\check{\beta}_{1'}=\beta_1+\beta_2+W_{12}$ since the upper left corner of $\widetilde{H}$ is
\begin{equation*}
    \begin{pmatrix}-\frac{1}{\sqrt{2}} & \frac{1}{\sqrt{2}} \\ \frac{1}{\sqrt{2}} & \frac{1}{\sqrt{2}}\end{pmatrix}\begin{pmatrix}2\beta_1 & -W_{12} \\ -W_{12} & 2\beta_2\end{pmatrix}\begin{pmatrix}-\frac{1}{\sqrt{2}} & \frac{1}{\sqrt{2}} \\ \frac{1}{\sqrt{2}} & \frac{1}{\sqrt{2}}\end{pmatrix}=\begin{pmatrix}2\check{\beta}_{1'} & -(\beta_1-\beta_2) \\ -(\beta_1-\beta_2) & \beta_1+\beta_2-W_{12}\end{pmatrix}.
\end{equation*}

Notice that the vector $A^{t}H'$ appearing in the Schur complement formula above is almost zero except for one coordinate thanks to the assumption that $U$ is indistinguishable from outside. Indeed, using the formula for $\widetilde{H}$ above we have
\begin{equation*}
    \widetilde{H}=\begin{pmatrix}2\check{\beta}_{1'} & -2\check{\beta}_{1'}A^{t}H' \\ -2\check{\beta}_{1'}H'A & H'\end{pmatrix}
\end{equation*}
but on the other hand, the first row of $\widetilde{H}$ must be
\begin{equation*}
    (2\check{\beta}_{1'},-(\beta_1-\beta_2),0,\dots,0)
\end{equation*}
by the calculation of the upper left corner of $\widetilde{H}$ above. The zeros appear in the above display since $\widetilde{H}=PHP$ so that if $k\notin\{1,2\}$, the $k$-th coordinate in the first row of $\widetilde{H}$ is $0$ as $W_{1k}=W_{2k}$.

We now perform the following change of $\beta$-field variables:\footnote{One checks the elementary equality $(\beta_1-\beta_2)^{2}=(\beta_1+\beta_2+W)^2-(4\beta_1\beta_2-W^2)-2W(\beta_1+\beta_2+W)$.}
\begin{equation*}
\begin{split}
    d\beta_1d\beta_2d\beta_3\dots d\beta_\infty&=\frac{\beta_1+\beta_2+W_{12}}{2|\beta_1-\beta_2|}d\check{\beta}_{1'}d\beta'_{1'}d\beta_3\dots d\beta_\infty\\
    &=\frac{\check{\beta}_{1'}}{\sqrt{4\check{\beta}_{1'}^2-2\beta'_{1'}\check{\beta}_{1'}-4W_{12}\check{\beta}_{1'}}}d\check{\beta}_{1'}d\beta'_{1'}d\beta_3\dots d\beta_\infty,
\end{split}
\end{equation*}
and rewrite the probability density~\eqref{eq:MultiInverseGaussian} with general parameters $(\theta,\kappa)$
\begin{equation*}
    d\nu^{W,\theta,\kappa}=\left(\sqrt{\frac{2}{\pi}}\right)^{V}\mathbf{1}_{\{H>0\}}e^{-\frac{1}{2}\left(\langle\theta,H\theta\rangle+\langle\kappa,G\kappa\rangle-2\langle\theta,\kappa\rangle\right)}\frac{\prod_{i\in V}\theta_i}{\sqrt{\det H}}\prod_{i\in V}d\beta_{i}
\end{equation*}
in the new $\beta$-variables and the reduced matrices $H',G'$. For convenience we denote by
\begin{equation*}
    \frac{1}{\Xi}=\frac{\beta_1+\beta_2+W_{12}}{2|\beta_1-\beta_2|}=\frac{\check{\beta}_{1'}}{\sqrt{4\check{\beta}_{1'}^2-2\beta'_{1'}\check{\beta}_{1'}-4W_{12}\check{\beta}_{1'}}}
\end{equation*}
the (reciprocal of the) Jacobian above, and notice that the last expression is a function of solely $\beta'_{1'}$ and $\check{\beta}_{1'}$. This quantity also appears in the Schur complement formulas above since
\begin{equation*}
    A^{t}H'=\left(\frac{\beta_1-\beta_2}{2\check{\beta}_{1'}},0,\dots,0\right)=(\epsilon^{12}\frac{\Xi}{2},0,\dots,0).
\end{equation*}
where we denote by $\epsilon^{12}=\text{sgn}(\beta_1-\beta_2)$.

Using $G=P\widetilde{G}P$, the Schur complement formula and the expression for $A^{t}H'$ above,
\begin{equation}\label{eq:QuadForm_G}
    \langle \kappa,G\kappa\rangle=\frac{(\kappa_1-\kappa_2)^{2}}{4\check{\beta}_{1'}}+\langle(\widehat{\kappa}+\epsilon^{12}\kappa^{\Xi}),G'(\widehat{\kappa}+\epsilon^{12}\kappa^{\Xi})\rangle
\end{equation}
where $\widehat{\kappa}=(\frac{1}{\sqrt{2}}(\kappa_1+\kappa_2),\kappa_3,\dots,\kappa_\infty)$ and $\kappa^{\Xi}=(\frac{\Xi}{2\sqrt{2}}(\kappa_1-\kappa_2),0,\dots,0)$. Similarly
\begin{equation*}
    \langle \theta,H\theta\rangle=\frac{\check{\beta}_{1'}}{4}((\theta_1-\theta_2)+\epsilon^{12}\Xi(\theta_1+\theta_2))^2+\langle\widehat{\theta},H'\widehat{\theta}\rangle,
\end{equation*}
and if furthermore we assume $\theta_1=\theta_2$ then
\begin{equation*}
    \langle \theta,H\theta\rangle=\frac{\check{\beta}_{1'}}{4}\Xi^2(\theta_1+\theta_2)^2+\langle\widehat{\theta},H'\widehat{\theta}\rangle.
\end{equation*}
Always under the assumption that $\theta_1=\theta_2$, the probability density $d\nu^{W,\theta,\kappa}$ above is written as
\begin{equation}\label{eq:ReducedMultivariateDensity}
    e^{-\frac{1}{2}\left(\frac{\check{\beta}_{1'}}{4}\Xi^2(\theta_1+\theta_2)^2+\langle\widehat{\theta},H'\widehat{\theta}\rangle+\frac{(\kappa_1-\kappa_2)^{2}}{4\check{\beta}_{1'}}+\langle(\widehat{\kappa}+\epsilon^{12}\kappa^{\Xi}),G'(\widehat{\kappa}+\epsilon^{12}\kappa^{\Xi})\rangle-2\langle\theta,\kappa\rangle\right)}\frac{\mathbf{1}_{\{H>0\}}\prod\limits_{i\in V}\sqrt{\frac{2}{\pi}}\theta_i}{\sqrt{\det H'}\sqrt{2\check{\beta}_{1'}}\Xi}d\check{\beta}_{1'}d\beta'_{1'}d\beta_3\dots d\beta_\infty
\end{equation}
where $\epsilon^{12}$ is an independent standard Rademacher random variable corresponding to a random permutation that expresses the indistinguishability of the vertices $\{1,2\}=U$. Notice that this probability density still belongs to the family~\eqref{eq:MultiInverseGaussian} if $\epsilon^{12}$ is sampled.

We finally come back to the Laplace transform in the statement of the theorem, where the law of $H$ is the above with $\theta=1$ and $\kappa=0$. Conditionally on $\mathcal{F}'$, the variables $(\beta'_{1'},\beta_3,\dots,\beta_{\infty})$ are all sampled, so that by polarizing~\eqref{eq:QuadForm_G} we have, with the notation $\widehat{\alpha}=(\frac{1}{\sqrt{2}}(\alpha_1+\alpha_2),\alpha_3,\dots,\alpha_\infty)$ and $\eta^{\Xi}=0$ since we assumed $\eta_1=\eta_2$,
\begin{equation*}
\begin{split}
    &\mathbb{E}\left[e^{-\langle\lambda,G\eta\rangle-\frac{1}{2}\langle\lambda,G\lambda\rangle}~|~\mathcal{F}'\right]\\
    ={}&\mathbb{E}\left[e^{-\langle\widehat{\lambda}+\epsilon^{12}\lambda^{\Xi},G'\widehat{\eta}\rangle-\frac{1}{2}\langle\widehat{\lambda}+\epsilon^{12}\lambda^{\Xi},G'(\widehat{\lambda}+\epsilon^{12}\lambda^{\Xi})\rangle-\frac{(\lambda_1-\lambda_2)^2}{8\check{\beta}_{1'}}}~\Big|~\mathcal{F}'\right]\\
    ={}&e^{-\langle\widehat{\lambda},G'\widehat{\eta}\rangle-\frac{1}{2}\langle\widehat{\lambda},G'\widehat{\lambda}\rangle}\mathbb{E}\left[e^{-\langle\widehat{\lambda}+\widehat{\eta},G'\epsilon^{12}\lambda^{\Xi}\rangle-\frac{1}{2}\langle\lambda^{\Xi},G'\lambda^{\Xi}\rangle-\frac{(\lambda_1-\lambda_2)^2}{8\check{\beta}_{1'}}}~\Big|~\mathcal{F}'\right].
\end{split}
\end{equation*}
It remains to show that the last conditional expectation is $1$ for any value of $\epsilon^{12}$: this is because the reweighting factor $e^{-\langle\widehat{\lambda}+\widehat{\eta},G'\epsilon^{12}\lambda^{\Xi}\rangle-\frac{1}{2}\langle\lambda^{\Xi},G'\lambda^{\Xi}\rangle-\frac{(\lambda_1-\lambda_2)^2}{8\check{\beta}_{1'}}}$ changes the law of $\check{\beta}_{1'}$, more precisely from the conditional law of $\check{\beta}_{1'}$ in~\eqref{eq:ReducedMultivariateDensity} with $\theta=1,\kappa=0$ to the conditional law of $\check{\beta}_{1'}$ in~\eqref{eq:ReducedMultivariateDensity} with $\theta=1,\kappa=\lambda+\eta$. Indeed, with $\theta=1$, the conditional density of $\check{\beta}_{1'}$ given $H'$ using~\eqref{eq:ReducedMultivariateDensity} with $\kappa=0$ is proportional to
\begin{equation*}
    e^{-\frac{1}{2}\check{\beta}_{1'}\Xi^2}\frac{\mathbf{1}_{\{\check{\beta}_{1'}>0\}}}{\sqrt{2\check{\beta}_{1'}}\Xi}d\check{\beta}_{1'},
\end{equation*}
while the conditional density of $\check{\beta}_{1'}$ given $H'$ using~\eqref{eq:ReducedMultivariateDensity} with $\kappa=\lambda+\eta$ (in particular $\kappa^{\Xi}=\lambda^{\Xi}$ and $\kappa_1-\kappa_2=\lambda_1-\lambda_2$) is proportional to
\begin{equation*}
    e^{-\frac{1}{2}\check{\beta}_{1'}\Xi^2-\frac{(\kappa_1-\kappa_2)^2}{8\check{\beta}_{1'}}-\frac{1}{2}\langle\lambda^{\Xi},G'\lambda^{\Xi}\rangle-\langle\widehat{\kappa},G'\epsilon^{12}\lambda^{\Xi}\rangle}\frac{\mathbf{1}_{\{\check{\beta}_{1'}>0\}}}{\sqrt{2\check{\beta}_{1'}}\Xi}d\check{\beta}_{1'}.
\end{equation*}
The difference between the two conditional densities is exacty the exponential reweighting factor above, and the two proportionality constant must be equal since both $d\nu^{W,1,\eta}$ and $d\nu^{W,1,\kappa}$ are probability measures.
\end{proof}

\begin{rema}
Using \eqref{eq:nu-eta-is-marginal}, with minor modifications, the above theorem can be generalized to the family of measures $\mu^{W,\theta,\eta}$ defined with~\eqref{eq:MultiInverseGaussian} under the condition that $\theta_i=\theta_j$ for all $i,j\in U$.
\end{rema}

As a consequence, we obtain the following:
\begin{coro}\label{coro:Exp(u)Martingale}
Consider the $H^{2|2}$-model on a finite graph $\mathcal{G}=(V,E,W)$, and let $U\subset V$ be indistinguishable from outside, and $i_0\in V\setminus U$ that serves to define the pinning condition for the $u$-field, i.e.
\begin{equation*}
    \forall k\in V,\quad e^{u_k}=\frac{G(k,i_0)}{G(i_0,i_0)}.
\end{equation*}
Recall that $\frac{1}{G(i_0,i_0)}$ is distributed as $2\gamma$ where $\gamma$ is a standard Gamma distributed random variable. Consider also the corresponding quantities (denoted by the superscript $'$) on the reduced graph $V'=(V\setminus U)\cup\{u\}$ with the coarse-graining procedure, and let $\mathcal{F}'$ be the $\sigma$-algebra generated by $(\beta'_k)_{k\in V'\setminus\{i_0\}}$ and $\gamma$. Then
\begin{equation*}
    \forall k\in V\setminus U,\quad \mathbb{E}\left[e^{u_k}|\mathcal{F}'\right]=e^{u'_k}\quad\text{and}\quad\forall k\in U,\quad \mathbb{E}\left[e^{u_k}|\mathcal{F}'\right]=e^{u'_{u}}.
\end{equation*}
where the last term $e^{u'_u}$ is the exponential of the $u'$-field at the reduced vertex $u$.
\end{coro}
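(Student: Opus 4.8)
The plan is to deduce the corollary from the conditional Laplace transform of Theorem~\ref{th:ExponentialMartingale} by specializing its test vectors and then extracting a conditional first moment. I would first note that the $\sigma$-algebra $\mathcal{F}'$ in the statement coincides with the one $\sigma\big((\beta'_k)_{k\in V'}\big)$ of Theorem~\ref{th:ExponentialMartingale}: the family $(\beta'_k)_{k\neq i_0}$ determines the $u'$-field, $\gamma$ determines $G'(i_0,i_0)=\tfrac{1}{2\gamma}$, and together they reconstruct $\beta'_{i_0}$ through the analogue of~\eqref{eq:RelationBeta_u} on $\mathcal{G}'$, while conversely $\gamma=\tfrac{1}{2G'(i_0,i_0)}$ is a function of $\beta'$. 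The case $k\in V\setminus U$ is then immediate and needs no averaging: since $i_0\notin U$, relation~\eqref{eq:G'_G} gives $G(k,i_0)=G'(k,i_0)$ and $G(i_0,i_0)=G'(i_0,i_0)$, so $e^{u_k}=e^{u'_k}$ holds pathwise, and $e^{u'_k}$, being a function of the $u'$-field and of $\gamma$, is $\mathcal{F}'$-measurable.

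For $k\in U$, writing $\gamma=\tfrac{1}{2G(i_0,i_0)}$, I would use the identities $e^{u_k}=2\gamma\,G(k,i_0)$ and $e^{u'_u}=2\gamma\,G'(u,i_0)$, so that the assertion reduces to the single claim $\mathbb{E}[G(k,i_0)\mid\mathcal{F}']=G'(u,i_0)=\tfrac{1}{|U|}\sum_{i\in U}G(i,i_0)$. To establish it I would apply Theorem~\ref{th:ExponentialMartingale} with $\eta=s\,e_{i_0}$, which is constant (equal to $0$) on $U$ so that the hypothesis is met, and $\lambda=t\,e_k$ with $s,t>0$; then $\widehat{\eta}=s\,e_{i_0}$, $\widehat{\lambda}=\tfrac{t}{\sqrt{|U|}}\,e_u$, and the theorem gives
\begin{equation*}
    \mathbb{E}\!\left[e^{-st\,G(k,i_0)-\frac{t^2}{2}G(k,k)}\mid\mathcal{F}'\right]
    =e^{-st\,G'(u,i_0)-\frac{t^2}{2}G'(u,u)}
\end{equation*}
with the averaged entries of~\eqref{eq:G'_G} on the right. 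The one delicate point is precisely this identification: the matrix $G'$ and the vectors $\widehat{\lambda},\widehat{\eta}$ in Theorem~\ref{th:ExponentialMartingale} live in the normalized basis $e_u=|U|^{-1/2}\sum_{i\in U}e_i$ used in its proof, for which $G'(u,i_0)=|U|^{-1/2}\sum_{i\in U}G(i,i_0)$ and $G'(u,u)=|U|^{-1}\sum_{i,j\in U}G(i,j)$; only after collecting these factors of $\sqrt{|U|}$ against the $|U|^{-1/2}$ in $\widehat{\lambda}_u$ does the right-hand exponent collapse to the clean form above expressed through the averaged coarse-grained entries of~\eqref{eq:G'_G}. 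I expect this normalization bookkeeping, rather than any conceptual difficulty, to be the main thing to get right.

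It then remains to extract the first moment. Differentiating the displayed identity in $s$ at $s=0$ and cancelling the common factor $-t$ yields $\mathbb{E}[G(k,i_0)\,e^{-\frac{t^2}{2}G(k,k)}\mid\mathcal{F}']=G'(u,i_0)\,e^{-\frac{t^2}{2}G'(u,u)}$, and letting $t\downarrow0$ gives $\mathbb{E}[G(k,i_0)\mid\mathcal{F}']=G'(u,i_0)$; multiplying by the $\mathcal{F}'$-measurable factor $2\gamma$ then produces $\mathbb{E}[e^{u_k}\mid\mathcal{F}']=e^{u'_u}$. The interchange of the $s$-derivative and of the limit $t\downarrow0$ with $\mathbb{E}[\,\cdot\mid\mathcal{F}']$ is justified by dominated convergence, using that $G$ has almost surely positive entries (so the integrands are bounded by $1$, respectively by $G(k,i_0)$) together with the integrability of $G(k,i_0)$ under the $H^{2|2}$-model. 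In the case $|U|=2$ needed in the sequel one may bypass the differentiation altogether: the transposition of the two vertices of $U$ preserves both the law of the model and the $\sigma$-algebra $\mathcal{F}'$ generated by the (permutation-symmetric) coarse-grained field, so $\mathbb{E}[G(i,i_0)\mid\mathcal{F}']$ takes the same value for $i=1,2$, and this value equals the $\mathcal{F}'$-measurable average $\tfrac{1}{2}(G(1,i_0)+G(2,i_0))=G'(u,i_0)$ by~\eqref{eq:G'_G}.
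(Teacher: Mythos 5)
Your proof is correct and follows essentially the same route as the paper's: the paper likewise obtains the identity by applying Theorem~\ref{th:ExponentialMartingale} with $\eta$ proportional to the indicator of $i_0$ (scaled by $1/G(i_0,i_0)=2\gamma$) and differentiating in the direction $e_k$ at $\lambda=0$, and your bookkeeping of the $|U|^{-1/2}$ normalization of $\widehat\lambda_u$ against the averaged entries of~\eqref{eq:G'_G} matches what is implicitly used there. One small correction to your justification of the limit interchange: $G(k,i_0)=e^{u_k}\,G(i_0,i_0)$ is \emph{not} integrable under the $H^{2|2}$-model, since $G(i_0,i_0)$ is inverse-Gamma$(\tfrac12)$ and has infinite mean; what your dominated-convergence step actually needs (and what does hold) is the almost sure finiteness of the \emph{conditional} expectation $\mathbb{E}[G(k,i_0)\mid\mathcal{F}']=\tfrac{1}{2\gamma}\,\mathbb{E}[e^{u_k}\mid\mathcal{F}']$, which follows from $\mathbb{E}[e^{u_k}]=1$ together with the $\mathcal{F}'$-measurability of $\gamma$. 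Your transposition-symmetry argument in the case $|U|=2$ is a valid shortcut not present in the paper (and it correctly does not extend verbatim to general $U$, where the internal weights $W_{ij}$, $i,j\in U$, need not be permutation invariant).
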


\begin{proof}
We first sample $G(i_0,i_0)=\frac{1}{2\gamma}$ which is independent of $(\beta'_k)_{k\in V'\setminus\{i_0\}}$, and consider $\eta=\frac{1}{G(i_0,i_0)}1_{U}$. It suffices to derive the main identity in Theorem~\ref{th:ExponentialMartingale} with respect to $\lambda$ in the direction $e_k$ then take $\lambda=0$.
\end{proof}

\begin{rema}
The preceding proofs yield explicit expressions for the $\beta$-fields in the fine-graining calculation of Theorem~\ref{th:FineGrainBeta}. In particular, the explicit conditional law of $\check{\beta}_{1'}$ given the reduced random Schrödinger operator $H'$ (described in the proof of Theorem~\ref{th:ExponentialMartingale}), combined with an independent standard Rademacher random variable $\epsilon^{12}$, allows for the construction of the operator $H$ on the original graph. This constitutes a constructive implementation of the fine-graining process of Theorem~\ref{th:FineGrainBeta}. The fine-graining of the random Green matrix $G'$ in Theorem~\ref{th:FineGraining_GH} is obtained analogously.
\end{rema}

\section{The $H^{2|2}$-model on the Dyson hierarchical lattice}\label{sec:the_h_2_2}
We are now ready to apply the exact real-space renormalization results derived previously to the $H^{2|2}$-model on the Dyson hierarchical lattice~\cite{MR436850}.

\subsection{Definition of the hierarchical $H^{2|2}$-model}
\label{sec-wired-bd-hier}
The Dyson hierarchical lattice $\mathbb{X}$, also referred to as the homogeneous hierarchical structure of degree $2$, is an ultrametric space indexed by the positive integers. Throughout this paper, we abusively write $\mathbb{X}=\{1,2,\dots\}$ to represent the underlying set; this notation is justified as the hierarchical structure imposed on $\mathbb{X}$ is fixed. Specifically, the distance $d_{\mathbb{X}}(i,j)$ between two points is defined as the smallest non-negative integer $n$ such that $i$ and $j$ belong to the same interval of the form $Q^{(n)}_k=[k2^{n}+1,(k+1)2^{n}]$ for some integer $k \ge 0$, see Figure \ref{fig1}. For example,
\begin{equation*}
    d_{\mathbb{X}}(1,1)=0, d_{\mathbb{X}}(1,3)=2, d_{\mathbb{X}}(1,9)=4, \dots
\end{equation*}

We define the edge weights on $\mathbb{X}$ by fixing a parameter $\rho>1$ and letting
\begin{equation}\label{eq:Defi_HierarchicalEdgeWeights}
    \forall i,j\in \mathbb{X},\quad W_{ij}=\overline{W}(2\rho)^{-d_{\mathbb{X}}(i,j)},
\end{equation}
where $\overline{W}$ is a global constant identified as the inverse temperature. This defines the weighted graph Laplacian $\Delta_{W}$ of $\mathbb{X}$ as in Section~\ref{subse:definition_of_the_h_2_2}, which has spectral dimension $d_s=2\frac{\ln 2}{\ln\rho}$ (see e.g.~\cite[Proposition~1.3]{MR2276652}). It is useful to think of $\mathbb{X}$ as an analogue of a $d_s$-dimensional Euclidean space. While we do not need the precise definition here, $d_s$ controls the behavior of the integrated density of states of $\Delta_{W}$ near the ground state, behaving similarly to the Laplacian on $\mathbb{Z}^d$ with spatial dimension $d$. Moreover, the simple random walk on $\mathbb{X}$ exhibits a phase transition at $d_s=2$ (recurrence vs.\ transience), mirroring the critical dimension of random walks on $\mathbb{Z}^d$.

We now define the $H^{2|2}$-model on the infinite graph $\mathbb{X}$ using the finite approximation scheme recalled in Section~\ref{subse:wired_boundary_condition_and_infinite_graph}. By~\cite[Proposition~1]{MR3904155}, this is equivalent to defining an infinite length random vector $(\beta_i)_{i\in\mathbb{X}}$ such that for any finite subset $V$ of $\mathbb{X}$, the following Laplace transform holds: if $\lambda\in\mathbb{R}^{\mathbb{X}}$ is such that $\lambda_i=0$ for all $i\notin V$, then
\begin{equation}
\label{laplace-transform-betaX}
    \mathbb{E}\left[e^{-\langle\lambda,\beta\rangle}\right]=e^{-\sum_{i,j\in\mathbb{X}}W_{ij}(\sqrt{(\lambda_i+1)(\lambda_j+1)}-1)}\prod_{i\in V}\frac{1}{\sqrt{\lambda_i+1}}.
\end{equation}
Indeed, the exponent of the exponential in the right-hand side above is finite and is equal to
\begin{equation*}
    \sum_{i,j\in V}W_{ij}(\sqrt{(\lambda_i+1)(\lambda_j+1)}-1)+\sum_{i\in V}(\sum_{j\in\mathbb{X}\setminus V}W_{ij})(\sqrt{\lambda_i+1}-1),
\end{equation*}
where $\sum_{j\in\mathbb{X}\setminus V}W_{ij}$ is the pinning weight for the wired boundary condition when we perform the effective reduction by replacing all vertices outside $V$ by a single additional vertex $\delta$.

In what follows, we construct the $H^{2|2}$-model on $\mathbb{X}$ by considering an exhaustion of the graph by finite balls $\Lambda_{n}=\{1,\dots,2^{n}\}$ of radius $n\geq 0$ containing $1$. To implement the wired boundary condition, we add an auxiliary vertex $\delta_n$ and denote by $\widetilde{\Lambda}_n=\Lambda_{n}\cup\{\delta_n\}$ the vertex set of the augmented graph. The edge weights are taken to be $W_{ij}$ if $i,j\in \Lambda_n$, while the connections to the boundary node are given by
\begin{equation*}
    W_{i,\delta_n}=\sum_{j\in\mathbb{X}\setminus\Lambda_n}W_{ij}=\overline{W}\frac{\rho^{-n}}{2(\rho-1)}.
\end{equation*}
Based on the results in Section~\ref{subse:wired_boundary_condition_and_infinite_graph}, we consider the $H^{2|2}$-model defined on the augmented graph $\widetilde{\Lambda}_n$. The marginal distribution of this model on the subset $\Lambda_n$ constitutes the $H^{2|2}$-model on $\Lambda_n$ with wired boundary conditions; this corresponds to the law of the components $(\beta_i)_{i\in\Lambda_n}$ of the random vector defined on $\widetilde{\Lambda}_n$. Finally, the $H^{2|2}$-model on the infinite graph $\mathbb{X}$ is defined via the Kolmogorov extension theorem applied to the consistent sequence of these $H^{2|2}$-models on $\Lambda_n$ equipped with wired boundary conditions (where the vertex $\delta_n$ effectively captures the influence of the complement $\mathbb{X}\setminus\Lambda_n$).

Given the random vector $(\beta_i)_{i\in\widetilde{\Lambda}_n}$ on $(\widetilde{\Lambda}_{n},W)$, we define the random Schrödinger operator $H^{(n)}$ acting on $\ell^{2}(\widetilde{\Lambda}_n)$ following Section~\ref{subse:definition_of_the_h_2_2} by
\begin{equation*}
    \forall i\in\widetilde{\Lambda}_n,\quad H^{(n)}f(i)=2\beta_if(i)-\sum_{j\in\widetilde{\Lambda}_n}W_{ij}f(j).
\end{equation*}
The random Green matrix $G^{(n)}=(H^{(n)})^{-1}$ has a path sum expansion as explained in Section~\ref{subse:green_function_and_random_walk_expansion_of_the_h_2_2}. Moreover, we may employ the $u$-field representation of the $H^{2|2}$-model reviewed in Section~\ref{subse:coupling_with_different_pinning_positions}, with pinning at the additional vertex $\delta_n$. We define
\begin{equation}
\label{eqpsi-n-i}
    \psi^{(n)}_{i} := \begin{cases} e^{u^{(n)}_i}=\frac{G^{(n)}(i,\delta_n)}{G^{(n)}(\delta_n,\delta_n)} & i\in \Lambda_{n} \\ 1 & \text{otherwise} .\end{cases}
\end{equation}
By \cite[Theorem 1.ii]{MR3904155}, for any $i$, $(\psi^{(n)}_{i})_{n\geq 0}$ is a ${\mathcal{F}}_{n}$ martingale where ${\mathcal{F}}_{n}=\sigma(\beta_{i},i\in \Lambda_{n})$. The a.s. martingale limits are denoted by \(\psi_{i} \).

The $\beta$-field is recovered from the $u$-field via~\eqref{eq:RelationBeta_u}. In particular, we have
\begin{equation*}
    2\beta_{\delta_n}=\sum_{j\in\widetilde{\Lambda}_n}W_{j\delta_n}e^{u_j^{(n)}}+2\gamma,
\end{equation*}
where $\gamma$ is a standard Gamma random variable representing the independent component $\gamma=\frac{1}{2G^{(n)}(\delta_n,\delta_n)}$. In summary, this construction relies solely on $(\beta_i)_{i\in\Lambda_n}$ and an independent Gamma random variable $\gamma$.

\begin{rema}[Wired boundary coupling]\label{rema:PinningConditionCoupling}
In particular, for any fixed spectral dimension $d_s$ and inverse temperature $\overline{W}$, one can choose the same $\gamma$ for all $(\widetilde{\Lambda}_n)_{n\geq 0}$ and then sample the $u^{(n)}$-fields independently to recover the $H^{2|2}$-model $H^{(n)}$ on $\widetilde{\Lambda}_n$. This yields a coupling of the sequence $(H^{(n)})_{n\geq 0}$ which corresponds to a special case of the wired boundary coupling applied to the Dyson hierarchical lattice.
\end{rema}

\subsection{Coarse-graining procedure on the hierarchical $H^{2|2}$-model}
Let $(H^{(n)})_{n}$ be the $H^{2|2}$-models on $\widetilde{\Lambda}_{n}$ under the wired boundary coupling described in the previous remark. In the sequel, we denote the additional vertex $\delta_n$ by $\infty$, so that $\widetilde{\Lambda}_n=\{1,\dots,2^{n},\infty\}$ for $n\geq 0$. Recall that this coupling involves a single independent inverse Gamma$(\frac{1}{2})$ variable associated with the pinning vertex $\infty$, which is common to all $\widetilde{\Lambda}_n$. Since the component at $\infty$ is fixed across levels, defining the $H^{2|2}$-models on $(\widetilde{\Lambda}_n,W)$ with edge weights~\eqref{eq:Defi_HierarchicalEdgeWeights} reduces to specifying the $u$-field, or equivalently $(e^{u^{(n)}_i})_{i\in \Lambda_n}$, on $\Lambda_{n}$. 

Consequently, the coarse-graining procedure described in~\cite[Corollary~2.3]{MR4517733}, defined by setting
\begin{equation*}
    \forall i\in\Lambda_{n-1},\quad e^{\widetilde{u}^{(n-1)}_i}=\frac{1}{2}(e^{u^{(n)}_{2i-1}}+e^{u^{(n)}_{2i}}),
\end{equation*}
provides a way to pass from $H^{(n)}$ to a random Schrödinger matrix $\widetilde{H}^{(n-1)}$ acting on $\ell^{2}(\widetilde{\Lambda}_{n-1})$. It follows from~\eqref{eq:ReducedEdgeWeights} that $\widetilde{H}^{(n-1)}$ is an $H^{2|2}$-model on $\widetilde{\Lambda}_{n-1}$ with inverse temperature $\frac{2\overline{W}}{\rho}$; moreover, the variables $(\widetilde{u}^{(n-1)}_i)_{i\in\Lambda_{n-1}}$ follow the law of the $u$-field on $\widetilde{\Lambda}_{n-1}$ with inverse temperature $\frac{2\overline{W}}{\rho}$. 

The coupling of $H^{(n)}$ and $\widetilde{H}^{(n-1)}$ induced by the coarse-graining procedure (which we refer to in the sequel as the {\em fine-graining coupling}) differs from the wired boundary coupling of Remark~\ref{rema:PinningConditionCoupling}. If $\rho\ne 2$, i.e. $d_{s}\ne 2$, the inverse temperature is modified, whereas for the wired boundary coupling, the inverse temperature remains unchanged.

Even in the critical dimension $\rho=2$, i.e. $d_{s}=2$, the two couplings differ, although the inverse temperature remains invariant in both cases. In fact, for fixed $n$, $\widetilde{H}^{(n-1)}$ and $H^{(n-1)}$ are equal in distribution.

\begin{rema}
An alternative proof of this fact follows directly from the exponential martingale property of Theorem~\ref{th:ExponentialMartingale}; see Corollary~\ref{coro:Exp(u)Martingale}.
\end{rema}

Our coarse-graining procedure in Section~\ref{subse:coarse_graining_procedure} is slightly more general, as it allows for the explicit coarse-graining of the entire matrix $H^{(n+1)}$ (resp.\ $G^{(n+1)}$) to $\widetilde{H}^{(n)}$ (resp.\ $\widetilde{G}^{(n)}$). More precisely, let $H^{(n+1)}$ be the random Schrödinger operator defining the $H^{2|2}$-model on $(\widetilde{\Lambda}_{n+1},W)$ with edge weights~\eqref{eq:Defi_HierarchicalEdgeWeights}, and let $G^{(n+1)}$ be the random Green matrix obtained by inverting $H^{(n+1)}$. We define the random matrix $\widetilde{G}^{(n)}$ by
\begin{equation*}
    \widetilde{G}^{(n)}(i,j)=\frac{1}{4}\left(G^{(n+1)}(2i-1,2j-1)+G^{(n+1)}(2i-1,2j)+G^{(n+1)}(2i,2j-1)+G^{(n+1)}(2i,2j)\right)
\end{equation*}
for all $i,j\in\widetilde{\Lambda}_{n}$. Here, for $i=\infty$, we adopt the convention $2i-1=2i=\infty$, and similarly for $j$. In other words, we average over adjacent pairs of columns (excluding the last column associated with the vertex $\infty$) and perform the corresponding operation on the rows. Finally, deterministically inverting $\widetilde{G}^{(n)}$ yields $\widetilde{H}^{(n)}$, thereby recovering the random Schrödinger operator representation.

The above coarse-graining procedure can also be made explicit in the $\beta$-field representation. Indeed, recall from Section~\ref{subse:wired_boundary_condition_and_infinite_graph} that, under the wired boundary condition, it suffices to specify $(\beta_k)_{k\in\Lambda_{n+1}}$ to define the $H^{2|2}$-model on $(\widetilde{\Lambda}_{n+1},W)$. Now given the random $\beta$-potential $(\beta_{k})_{k\in\Lambda_{n+1}}$ of $H^{(n+1)}$, we can define via~\eqref{eq:Definition_Beta'u}
\begin{equation*}
    \forall k\in\Lambda_{n},\quad \frac{1}{2\widetilde{\beta}'_k}=\frac{1}{4}\frac{2\beta_{2k-1}+2\beta_{2k}+2W_{2k-1,2k}}{4\beta_{2k-1}\beta_{2k}-(W_{2k-1,2k})^{2}},
\end{equation*}
which is exactly the random $\beta$-potential needed to define $\widetilde{H}^{(n)}$ on $\widetilde{\Lambda}_n$ with inverse temperature $\frac{2\overline{W}}{\rho}$ under the wired boundary.

\subsection{Fine-graining coupling on the hierarchical $H^{2|2}$-model}
\label{sec:find-grain-coupling}
Our fine-graining results in Section~\ref{subse:fine_graining_procedure} establishes the reverse procedure, i.e. one can explicitly construct $H^{(n+1)}$ (equivalently $G^{(n+1)}$) given $\widetilde{H}^{(n)}$ (equivalently $\widetilde{G}^{(n)}$) by adding appropriate extra randomness.

Fix $\rho>1$. Let us define the fine-graining coupling of the $H^{2|2}$-model on the hierarchical lattice, i.e., the sequence of random Schrödinger operators $(\widetilde{H}^{(n)})_{n}$, obtained by iterating the fine-graining procedure described in Theorem~\ref{th:FineGrainBeta} or alternatively Theorem~\ref{th:FineGraining_GH}. Note that, unlike the wired boundary coupling, in this sequence of models the inverse temperature is non-constant; at each step, a factor $\frac{2}{\rho}$ is applied to the inverse temperature.

More precisely, one first samples the $H^{2|2}$-model on the $(2^{0}+1)$-point graph $(\widetilde{\Lambda}_0,W^{(0)})$. Then, using either of the above-mentioned theorems, one obtains the $H^{2|2}$-model on the $(2^{1}+1)$-point graph $(\widetilde{\Lambda}_1,W^{(1)})$ by blowing up the vertex $\{1\}$ into $\{1,2\}$, where the new edge weight $W^{(1)}$ is computed using $W^{(0)}$. We repeat this procedure, computing $W^{(n+1)}$ using $W^{(n)}$ at each iteration; for instance, applying the theorems twice blows up $\{1,2\}$ into $\{\{1,2\},\{3,4\}\}$, yielding the $H^{2|2}$-model on the $(2^{2}+1)$-point graph $(\widetilde{\Lambda}_2,W^{(2)})$, and so on. This in particular provides a coupling of the $H^{2|2}$-models at all levels $(\widetilde{\Lambda}_n,W^{(n)})_{n\geq 0}$. This coupling will be called the fine-graining coupling on the Dyson hierarchical lattice $\mathbb{X}$. By definition, for all $n$, $\widetilde{G}^{(n)}(\infty,\infty)=\widetilde{G}^{(0)}(\infty,\infty) = \frac{1}{2\gamma}$, i.e. the Gamma random variable $\gamma$ at the pinning point is common to all $\widetilde{\Lambda}_{n}$.

\begin{rema}[Change of the inverse temperature under the fine-graining procedure]\label{rema:FineGrainingCoupling}
This coupling is not the same as the wired boundary coupling discussed in Section~\ref{subse:fine_graining_procedure} as we keep blowing up internal vertices instead of only adding new vertices from outside. Indeed, the fine-graining procedure multiplies the inverse temperature $\overline{W}$ of by a factor $\frac{\rho}{2}$ every time we pass from $\widetilde{\Lambda}_n$ to $\widetilde{\Lambda}_{n+1}$, as the fine-graining coupling inverts the coarse-graining procedure.
\end{rema}

The most interesting case, which we will discuss in detail in the sequel, is that of $d_s=2$ (i.e. $\rho=2$). Indeed, when $d_s\neq 2$, the fine-graining procedure yields a coupling of $H^{2|2}$-models with inverse temperatures that vary with the level $n\geq 0$. As $n$ tends to infinity (approximating $\mathbb{X}$), the inverse temperature converges to either $0$ or $\infty$, leading to a limiting object with degenerate parameters. However, in the case $d_s=2$, the fine-graining procedure preserves the inverse temperature. Consequently, it is meaningful to investigate the almost sure convergence properties under the fine-graining coupling of Remark~\ref{rema:FineGrainingCoupling}, since the potential limiting object possesses, a priori, a non-trivial inverse temperature. We will rigorously define the limiting object and establish its fundamental properties in Section~\ref{sec:continuous_space_scaling_limit_of_the_hiarachical_h_2_2} below.

Finally, we make some useful remarks concerning the $u$-fields under the fine-graining coupling. To this end, we first define a filtration $(\widetilde{\mathcal{F}}_n)_{n\geq 0}$ as follows: we set $\widetilde{\mathcal{F}}_0=\sigma(\gamma)$, and let $\widetilde{\mathcal{F}}_{n}$ be generated by $\gamma$ and the random potentials $(\widetilde{\beta}^{(n)}_i)_{i\in\Lambda_n}$ of $\widetilde{H}_{n}$. The exponential martingale property of Theorem~\ref{th:ExponentialMartingale} holds with respect to this filtration. In particular, Corollary~\ref{coro:Exp(u)Martingale} implies that, with $ e^{\widetilde{u}^{(n)}_i}=\frac{\widetilde{G}^{(n)}(i,\delta_n)}{\widetilde{G}^{(n)}(\delta_n,\delta_n)}$, we have
\begin{equation}\label{eq:Martingale_u_Hierarchical}
    \mathbb{E}\left[e^{\widetilde{u}^{(n+1)}_k}\,\middle|\,\widetilde{\mathcal{F}}_n\right]=e^{\widetilde{u}^{(n)}_i}\quad\text{if}\quad k\in\{2i-1,2i\}.
\end{equation}
We will see later that this equation provides a natural mechanism to define martingales indexed by $x\in [0,1]$.

Note that quantities under the fine-graining coupling are denoted with a tilde: $\widetilde{\beta}^{(n)}_{i}$, $\widetilde{u}^{(n)}_{i}$, $\widetilde{H}^{(n)}$ and $\widetilde{G}^{(n)}$, whereas quantities under the wired boundary coupling are denoted without a tilde: $\beta^{(n)}_{i}$, $u^{(n)}_{i}$, $H^{(n)}$ and $G^{(n)}$. In the case $\rho=2$, i.e. $d_{s}=2$, at fixed level $n$, they are equal in distribution.

\subsection{Some known estimates of the hierarchical $H^{2|2}$-model}
We now recall some known localization results on the hierarchical $H^{2|2}$-model with edge weights $W_{ij}=\overline{W}(2\rho)^{-d_{\mathbb{X}}(i,j)}$ under the wired boundary coupling of Section~\ref{subse:wired_boundary_condition_and_infinite_graph}. Recall that we denoted by $d_s=2\frac{\ln 2}{\ln\rho}$ the spectral dimension.

Recall that $e^{u^{(n)}_i}=\frac{G^{(n)}(i,\delta_n)}{G^{(n)}(\delta_n,\delta_n)}$. The following fractional moment estimates are proven in~\cite[Theorem~3]{Wang:2025aa}: $\forall 0<s<\frac{1}{2}$ and $\forall n\geq 1$,
\begin{enumerate}
    \item If $d_s<2$, there exists $C(\overline{W},d_s,s)>0$ such that for all $i\in\Lambda_n$,
    \begin{equation*}
        \mathbb{E}\left[e^{su^{(n)}_i}\right]\leq C(\overline{W},d_s,s)\rho^{-sn}.
    \end{equation*}
    \item If $d_s=2$, there exists $C(\overline{W},s)>0$ such that for all $i\in\Lambda_n$,
    \begin{equation}\label{eq:WangZeng}
        \mathbb{E}\left[e^{su^{(n)}_i}\right]\leq C(\overline{W},d_s,s)C(\overline{W},s)^{-sn}.
    \end{equation}
    where in particular $C(\overline{W},s)\to 2>1$ as $\overline{W}\to 0$.
\end{enumerate}

\begin{rema}
The estimates above can be extended to all fractional moments with $0<s<1$ by generalizing the Ward identity arguments of~\cite[Appendix~C]{MR2728731} or~\cite[Section~2.5]{MR4021254}, which yields
\begin{equation*}
    \mathbb{E}[e^{su^{(n)}_i}]=\mathbb{E}[e^{(1-s)u^{(n)}_i}],
\end{equation*}
which is valid for all $s\in\mathbb{R}$. The only point missing is $s=\frac{1}{2}$ but this can be recovered by a standard continuity argument. That is, by Hölder's inequality,
\begin{equation*}
    \mathbb{E}[e^{\frac{1}{2}u^{(n)}_i}]\leq\mathbb{E}[e^{\frac{1+\epsilon}{2}u^{(n)}_i}]^{\frac{1}{1+\epsilon}}\leq C\rho^{-\frac{1-\epsilon}{2(1+\epsilon)}n},
\end{equation*}
for any $\epsilon>0$.
\end{rema}

\section{Continuous space scaling limit of the hierarchical $H^{2|2}$-model}\label{sec:continuous_space_scaling_limit_of_the_hiarachical_h_2_2}

We now construct a continuum scaling limit of the hierarchical $H^{2|2}$-model embedded in the interval $[0,1]$ at the critical dimension $d_s=2$ (equivalently, $\rho=2$). Our construction draws inspiration from the theory of Gaussian Multiplicative Cascades developed by Mandelbrot~\cite{MR431351,MR431352} and Kahane-Peyrière~\cite{MR431355}; specifically, we utilize the martingale property established in Theorem~\ref{th:ExponentialMartingale} and Corollary~\ref{coro:Exp(u)Martingale} to define a positive measure-valued martingale and analyze its limit. We establish fundamental properties of the limiting measure and derive characterization theorems in terms of Vertex Reinforced Jump Processes on the hierarchical lattice. It must be emphasized, however, that this $H^{2|2}$-limiting random measure is fundamentally distinct from the class of Gaussian Multiplicative Chaos random measures.

\subsection{Construction of the limiting measure}
Fix inverse temperature $\overline{W}>0$, consider the fine-graining coupling of \(H^{2|2}\)-models on $\widetilde{\Lambda}_{n},\ n\geq 0$, defined in Section \ref{sec:find-grain-coupling}. In particular, for each $n$, we have a field $\widetilde{u}^{(n)}_{i},\ i\in \Lambda_{n}$. Using these fields, we define a random measure $\mathfrak{m}_n$ on $[0,1]$ that is absolutely continuous with respect to the Lebesgue measure $\lambda$, with the Radon-Nikodym derivative given by
\begin{equation}
\label{eq:def-mnx}
    \frac{d\mathfrak{m}_n}{d\lambda}(x)=e^{\widetilde{u}^{(n)}_i}\quad\text{if}\quad x\in\left[\frac{i-1}{2^{n}},\frac{i}{2^{n}}\right).
\end{equation}

For a fixed $x \in [0,1]$, let $i_n^x$ be the unique index such that $x$ lies in the dyadic interval $[\frac{i_{n}^{x}-1}{2^{n}}, \frac{i_{n}^x}{2^{n}})$. Define $\varphi^{(n)}_{x}:=e^{\widetilde{u}^{(n)}_{i_{n}^{x}}}$ By \eqref{eq:Martingale_u_Hierarchical}, the sequence of random variables $\varphi^{(n)}_{x}$ forms a martingale, which therefore converges almost surely to a random variable denoted by $\varphi_{x}$, i.e.
\begin{equation}
\forall x\in [0,1], \ \lim_{n\to \infty} \varphi^{(n)}_{x} = \varphi_{x} \text{ a.s.}
\label{eq:mart-eux}
\end{equation}
See Figure~\ref{fig2} for an illustration of this construction.

\begin{figure}[h]
    \centering
    \scalebox{0.8}{

\begin{tikzpicture}[
    scale=1.5,
    xscale=8, 
    yscale=0.6, 
    lattice/.style={draw=gray!20, line width=0.3pt}, 
    pathline/.style={draw=red!90!black, line width=1.5pt},
    interval/.style={|-|, line width=0.8pt}
]

\def\numlevels{6} 
\def\targetX{0.68} 

\foreach \n in {0,...,\numlevels} {
    \pgfmathsetmacro{\numintervals}{2^\n}
    \foreach \i in {1,...,\numintervals} {
        \pgfmathsetmacro{\xstart}{(\i-1)/\numintervals}
        \pgfmathsetmacro{\xend}{\i/\numintervals}
        \pgfmathsetmacro{\xmid}{(\xstart + \xend)/2}
        
        \draw[lattice, |-|] (\xstart, -\n) -- (\xend, -\n);
        
        \ifnum\n>0
            \pgfmathsetmacro{\parentI}{ceil(\i/2)}
            \pgfmathsetmacro{\parentN}{\n-1}
            \pgfmathsetmacro{\numintervalsParent}{2^\parentN}
            \pgfmathsetmacro{\xstartP}{(\parentI-1)/\numintervalsParent}
            \pgfmathsetmacro{\xendP}{\parentI/\numintervalsParent}
            \pgfmathsetmacro{\xmidP}{(\xstartP + \xendP)/2}
            \draw[lattice] (\xmidP, -\parentN) -- (\xmid, -\n);
        \fi
    }
}

\pgfmathsetmacro{\currentStart}{0}
\pgfmathsetmacro{\currentEnd}{1}

\foreach \n in {0,...,\numlevels} {
    \pgfmathsetmacro{\currentMid}{(\currentStart + \currentEnd)/2}
    
    \draw[pathline, |-|] (\currentStart, -\n) -- (\currentEnd, -\n);
    
    \ifnum\n<\numlevels
        \pgfmathsetmacro{\nextLevel}{\n+1}
        \pgfmathparse{\targetX < \currentMid ? 1 : 0}
        \ifnum\pgfmathresult=1
            \pgfmathsetmacro{\nextStart}{\currentStart}
            \pgfmathsetmacro{\nextEnd}{\currentMid}
        \else
            \pgfmathsetmacro{\nextStart}{\currentMid}
            \pgfmathsetmacro{\nextEnd}{\currentEnd}
        \fi
        \pgfmathsetmacro{\nextMid}{(\nextStart + \nextEnd)/2}
        
        \draw[pathline] (\currentMid, -\n) -- (\nextMid, -\nextLevel);
        
        \global\let\currentStart\nextStart
        \global\let\currentEnd\nextEnd
    \fi
}

\draw[->, thick] (-0.05, -\numlevels - 1.0) -- (1.05, -\numlevels - 1.0) node[right] {};
\draw (0, -\numlevels - 0.9) -- (0, -\numlevels - 1.1) node[below] {\scriptsize 0};
\draw (1, -\numlevels - 0.9) -- (1, -\numlevels - 1.1) node[below] {\scriptsize 1};

\fill[red!90!black] (\targetX, -\numlevels - 1.0)  node[below=1pt] {$x$};

\foreach \n in {0,...,\numlevels} {
    \node[left, font=\scriptsize, gray] at (-0.05, -\n) {$n=\n$};
}

\end{tikzpicture}

 }
\caption{Sequence of $\varphi^{(n)}_{x}=e^{\widetilde{u}^{(n)}_{i}}$ converges to $\varphi_{x}$}
\label{fig2}
\end{figure}
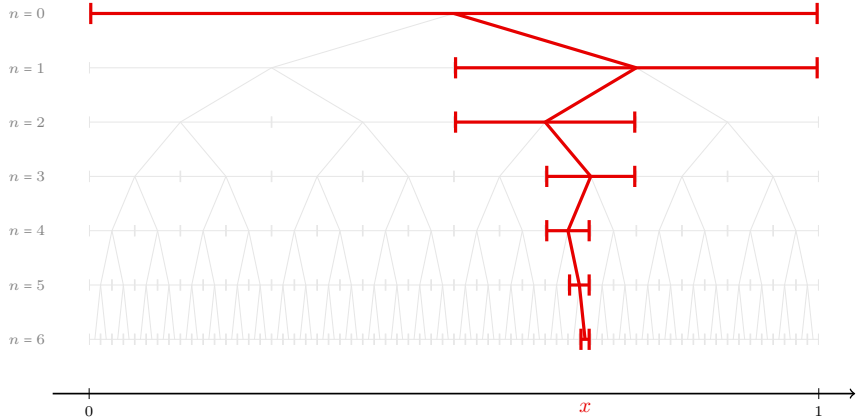

Corollary~\ref{coro:Exp(u)Martingale} immediate implies that the sequence of random measure $m_n$ is a positive measure-valued martingale on $[0,1]$, in the sense that for any Borel measurable set $A\subset[0,1]$, one has
\begin{equation*}
\mathbb{E}[\mathfrak{m}_{n+1}(A)\,|\,\widetilde{\mathcal{F}}_{n}]=\mathfrak{m}_n(A),
\end{equation*}
where we recall that $\widetilde{\mathcal{F}}_n$ is generated by $\gamma$ and $(\widetilde{u}^{(n)}_{i})_{1\leq i\leq 2^{n}}$, and $\widetilde{\mathcal{F}}_0=\sigma(\gamma)$. As a consequence of martingale convergence theorem, there exists an almost sure limit $\lim_{n\to \infty}\mathfrak{m}_{n}$ in the weak topology of measures.

\begin{defi}[Random measure limit of the hierarchical $H^{2|2}$-model at the critical dimension]
Fix the inverse temperature $\overline{W}>0$. We denote by $\mathfrak{m}_{\mathbb{X}}$ the almost-sure limit of the positive measure-valued martingale $(\mathfrak{m}_n)_{n\geq 0}$ defined in \eqref{eq:def-mnx} with respect to the filtration $\widetilde{\mathcal{F}}_n$, and call it the continuous space scaling limit of the two-dimensional hierarchical $H^{2|2}$-model at inverse temperature $\overline{W}$.
\end{defi}

Contrary to the Gaussian multiplicative chaos measure, the passage of $\mathfrak{m}_n$ to $\mathfrak{m}_{n+1}$ follows a different mechanism. Indeed, under the fine-graining coupling, which by Remark~\ref{rema:AlmostSure+Simultaneous} is realization-wise, the total mass of $\mathfrak{m}_n$ on any interval $[\frac{i-1}{2^{n}},\frac{i}{2^{n}})$ is almost surely the same as that of $\mathfrak{m}_{n+1}$, $\mathfrak{m}_{n+2}$, \dots In particular, the total mass $\mathfrak{m}_{\mathbb{X}}([0,1])$ is $\widetilde{\mathcal{F}}_0$-measurablbe.
\begin{prop}[Non-degeneracy]
The measure $\mathfrak{m}_{\mathbb{X}}$ is almost surely non-degenerate.
\end{prop}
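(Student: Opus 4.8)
The plan is to exploit the feature already emphasized in the paragraph preceding the proposition: under the fine-graining coupling the total mass is conserved \emph{pathwise} on every dyadic interval, so no mass can be lost along the martingale. This is in sharp contrast with the Mandelbrot--Kahane--Peyri\`ere cascade setting, where non-degeneracy is equivalent to a uniform-integrability ($L\log L$) condition on the multiplier; here that difficulty simply does not arise, and the whole content of the proof is to make the conservation rigorous and transfer it to the weak limit.

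First I would record the pathwise identity for the $u$-field. By construction (Remark~\ref{rema:AlmostSure+Simultaneous} and Section~\ref{sec:find-grain-coupling}), under the fine-graining coupling the operator $\widetilde{H}^{(n)}$ is realization-by-realization the coarse-graining of $\widetilde{H}^{(n+1)}$, so the coarse-graining formula for the $u$-field holds pathwise:
\begin{equation*}
    e^{\widetilde{u}^{(n)}_i}=\tfrac{1}{2}\bigl(e^{\widetilde{u}^{(n+1)}_{2i-1}}+e^{\widetilde{u}^{(n+1)}_{2i}}\bigr),\qquad i\in\Lambda_n.
\end{equation*}
This is the almost-sure statement underlying the conditional identity~\eqref{eq:Martingale_u_Hierarchical}. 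Consequently the mass that $\mathfrak{m}_{n+1}$ assigns to $[\frac{i-1}{2^n},\frac{i}{2^n})$ is
\begin{equation*}
    \frac{1}{2^{n+1}}\bigl(e^{\widetilde{u}^{(n+1)}_{2i-1}}+e^{\widetilde{u}^{(n+1)}_{2i}}\bigr)=\frac{1}{2^{n}}e^{\widetilde{u}^{(n)}_i},
\end{equation*}
which is exactly the mass assigned by $\mathfrak{m}_n$. By induction, for any dyadic interval $I$ of level $n$ one has $\mathfrak{m}_m(I)=\mathfrak{m}_n(I)$ almost surely for all $m\ge n$; taking $I=[0,1]$ and $n=0$ gives $\mathfrak{m}_n([0,1])=\mathfrak{m}_0([0,1])=e^{\widetilde{u}^{(0)}_1}$ for all $n\ge 0$.

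Next I would pass to the weak limit. Since $[0,1]$ is compact, testing the almost-sure weak convergence $\mathfrak{m}_n\to\mathfrak{m}_{\mathbb{X}}$ against the continuous function $\mathbf{1}$ yields $\mathfrak{m}_{\mathbb{X}}([0,1])=\lim_n\mathfrak{m}_n([0,1])=e^{\widetilde{u}^{(0)}_1}$ almost surely. It then remains to check strict positivity of this level-$0$ density: $e^{\widetilde{u}^{(0)}_1}=\widetilde{G}^{(0)}(1,\infty)/\widetilde{G}^{(0)}(\infty,\infty)$ is a ratio of entries of the Green matrix of the two-point model, and since $\widetilde{H}^{(0)}$ is almost surely positive definite with $\widetilde{G}^{(0)}=(\widetilde{H}^{(0)})^{-1}$ having strictly positive entries by the random walk expansion of Section~\ref{subse:green_function_and_random_walk_expansion_of_the_h_2_2} (all edge weights being positive), this ratio lies in $(0,\infty)$ almost surely. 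Hence $\mathfrak{m}_{\mathbb{X}}([0,1])>0$ almost surely, i.e.\ $\mathfrak{m}_{\mathbb{X}}$ is not the zero measure.

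There is essentially no analytic obstacle in this argument; the only points requiring (minor) care are the interchange of the weak limit with evaluation of total mass, which is immediate from compactness of $[0,1]$, and the strict positivity and finiteness of $e^{\widetilde{u}^{(0)}_1}$, which follow from positivity of the Green function. The conceptually important step — and the one worth stating clearly — is the pathwise mass conservation, which is precisely the realization-wise nature of the fine-graining coupling and is what removes the need for any non-degeneracy criterion of cascade type.
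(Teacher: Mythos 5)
Your proof is correct and follows essentially the same route as the paper: the realization-wise conservation of total mass under the fine-graining coupling forces $\mathfrak{m}_{\mathbb{X}}([0,1])=e^{\widetilde{u}^{(0)}_1}$, which is almost surely finite and strictly positive. You merely spell out the details (the dyadic-interval induction, the passage to the weak limit via the constant test function, and the positivity of the Green-function ratio) that the paper's one-line proof leaves implicit.
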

\begin{proof}
By construction, $\mathfrak{m}_0([0,1])=e^{\widetilde{u}^{(0)}_1}$, and according to the above discussion, the total mass remains unchanged realization-wise as $n$ increases, therefore $\mathfrak{m}_{\mathbb{X}}([0,1])$ is also equal to $e^{\widetilde{u}^{(0)}_1}$, which is almost surely non-trivial.
\end{proof}

Actually, since the distribution of $e^{\widetilde{u}^{(0)}_1}$ is explicit, we already know about the exact law of the total mass $\mathfrak{m}_{\mathbb{X}}([0,1])$.
\begin{prop}[Exact law of the total mass]
The total mass $\mathfrak{m}_{\mathbb{X}}([0,1])$ of the limiting measure of the hierarchical $H^{2|2}$-model is distributed as an inverse Gaussian random variable $\text{IG}(1,\frac{\overline{W}}{2(\rho-1)})$. Especially, $\mathbb{E}[\mathfrak{m}_{\mathbb{X}}([0,1])]=1$.

Furthermore, for any non-trivial interval $[x,y]\subset[0,1]$, we have for all $p\in\mathbb{R}$
\begin{equation*}
    \mathbb{E}[\mathfrak{m}_{\mathbb{X}}([x,y])^{p}]<\infty.
\end{equation*}
\end{prop}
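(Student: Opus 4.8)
The plan is to treat the two assertions separately: the exact law reduces to the two-point graph at level $0$, while the moment bounds reduce to exponential-moment estimates for the $u$-field on the finite graphs $\widetilde{\Lambda}_n$.

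For the exact law, I would start from the identity $\mathfrak{m}_{\mathbb{X}}([0,1])=e^{\widetilde{u}^{(0)}_1}$ established in the non-degeneracy proposition, which shows that the total mass depends only on the $H^{2|2}$-model on the two-point graph $(\widetilde{\Lambda}_0,W^{(0)})$ with single edge weight $w=W^{(0)}_{1,\infty}=\frac{\overline{W}}{2(\rho-1)}$. On this graph there is exactly one spanning tree, so the $u$-field density \eqref{eq:Distribution_u} with pinning $u_\infty=0$ reduces to a density in the single variable $u_1$, with $D(W,u)=we^{u_1}$, giving a density proportional to $e^{-u_1/2}e^{-w(\cosh u_1-1)}$. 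Changing variables to $y=e^{u_1}$ and using $\cosh u_1-1=\tfrac{(y-1)^2}{2y}$, the law of $y$ has density proportional to $y^{-3/2}\exp\!\big(-w(y-1)^2/(2y)\big)$, which I would match term-by-term with the density of $\text{IG}(1,w)$. The normalization $\mathbb{E}[\mathfrak{m}_{\mathbb{X}}([0,1])]=1$ then follows from the mean of $\text{IG}(1,w)$ being $1$ (consistent with the martingale normalization $\mathbb{E}[\varphi^{(n)}_x]=1$).

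For the moment bounds, the positive case is immediate: any interval satisfies $\mathfrak{m}_{\mathbb{X}}([x,y])\le\mathfrak{m}_{\mathbb{X}}([0,1])$, and the inverse Gaussian law has finite moment generating function near the origin, hence all positive moments are finite. For a negative exponent $p=-q<0$, I would pick a dyadic subinterval $I=[\tfrac{i-1}{2^n},\tfrac{i}{2^n}]\subset[x,y]$; since the fine-graining coupling conserves mass realization-wise, $\mathfrak{m}_{\mathbb{X}}(I)=2^{-n}e^{\widetilde{u}^{(n)}_i}$, so that $\mathfrak{m}_{\mathbb{X}}([x,y])^{-q}\le 2^{nq}e^{-q\widetilde{u}^{(n)}_i}$. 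The problem thus reduces to showing $\mathbb{E}\big[e^{s\widetilde{u}^{(n)}_i}\big]<\infty$ for every $s\in\mathbb{R}$ on the finite graph $\widetilde{\Lambda}_n$.

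The main obstacle is precisely this last finiteness statement for arbitrarily large $|s|$; the Ward identity $\mathbb{E}[e^{su_i}]=\mathbb{E}[e^{(1-s)u_i}]$ exchanges $s\leftrightarrow 1-s$ but still leaves the regime $s>1$ to control, so a direct bound is unavoidable. I would establish it directly from \eqref{eq:Distribution_u} on a finite connected graph: fix a spanning tree, pass to the edge-difference coordinates $(v_e)_{e\in T}$ with $u_{i_0}=0$ (a unimodular linear change of variables), and bound the integrand. Every $u_j$ is a bounded linear combination of the $v_e$, so the spanning-tree sum obeys $\sqrt{D(W,u)}\le C\,e^{a\sum_e|v_e|}$ and the prefactor $e^{su_i-\sum_j u_j}$ is bounded by $e^{a'\sum_e|v_e|}$, while the non-tree contributions $e^{-W_{jk}(\cosh(u_j-u_k)-1)}\le 1$ are harmless. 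The integral then factorizes over tree edges into terms $\int_{\mathbb{R}}e^{b|v|-W_e(\cosh v-1)}\,dv<\infty$, finite because the doubly-exponential decay of $e^{-W_e\cosh v}$ dominates every exponential $e^{b|v|}$. This yields $\mathbb{E}\big[e^{s\widetilde{u}^{(n)}_i}\big]<\infty$ for all $s$, completing the negative-moment bound.
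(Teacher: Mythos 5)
Your proof is correct. For the identification of the law of the total mass and for the positive moments you follow the same route as the paper: the paper likewise reads the law of $e^{\widetilde{u}^{(0)}_1}$ off the exact density \eqref{eq:Distribution_u} on the two-point graph $\widetilde{\Lambda}_0$ with $W_{1,\delta_0}=\frac{\overline{W}}{2(\rho-1)}$ (your change of variables $y=e^{u_1}$ with $\cosh u_1-1=\frac{(y-1)^2}{2y}$ is exactly the computation the paper leaves implicit), and it bounds $\mathfrak{m}_{\mathbb{X}}([x,y])\le\mathfrak{m}_{\mathbb{X}}([0,1])$ to get all positive moments from the inverse Gaussian law. For the negative moments you genuinely diverge. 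The paper applies the Ward identity $\mathbb{E}[e^{p\widetilde{u}^{(n)}_i}]=\mathbb{E}[e^{(1-p)\widetilde{u}^{(n)}_i}]$ and then controls the resulting exponent $1-p>1$ via the realization-wise mass-conservation inequality $e^{\widetilde{u}^{(n)}_i}\le 2^{n}\mathfrak{m}_{\mathbb{X}}([0,1])$ together with the finiteness of all positive polynomial moments of the inverse Gaussian total mass; so, contrary to your remark that a direct bound is ``unavoidable'', the regime $s>1$ can be dispatched by reducing to the explicitly known level-$0$ variable. Your alternative --- a direct estimate on the density \eqref{eq:Distribution_u} in spanning-tree edge coordinates, showing $\mathbb{E}\big[e^{s\widetilde{u}^{(n)}_i}\big]<\infty$ for every $s\in\mathbb{R}$ because each one-dimensional factor $\int_{\mathbb{R}}e^{b|v|-W_e(\cosh v-1)}\,dv$ is finite --- is also valid and is more self-contained, since it avoids the Ward identity (which the paper only cites) and yields the stronger statement that the finite-volume $u$-field has all exponential moments; the price is a longer computation where the paper gets by in two lines.
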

\begin{proof}
The distribution of $\mathfrak{m}_{\mathbb{X}}([0,1])=e^{\widetilde{u}^{(0)}_1}$ can be read off directly from the exact distribution~\eqref{eq:Distribution_u}, with $W_{1,\delta_0}=\overline{W}\frac{\rho^{-n}}{2(\rho-1)}$. From this exact distribution or alternatively by the Ward identity~\cite[Appendix~C]{MR2728731} or~\cite[Section~2.5]{MR4021254}, $\mathbb{E}[e^{\widetilde{u}^{(0)}_1}]=1$.

The positive moments of $\mathfrak{m}_{\mathbb{X}}([0,1])$ are finite since there are upper bounded by $\mathbb{E}[\mathfrak{m}_{\mathbb{X}}([0,1])^{p}]$, and an inverse Gaussian distribution has all positive moments. For the negative moments, pick a small enough dyadic interval $[(i-1)2^{-n},i2^{-n})\subset[x,y]$, with $\mathfrak{m}_n([(i-1)2^{-n},i2^{-n}))$ distributed as $2^{n}e^{\widetilde{u}^{(n)}_i}$. By Ward identity, for any $p<0$, $\mathbb{E}[e^{p\widetilde{u}^{(n)}_i}]=\mathbb{E}[e^{(1-p)\widetilde{u}^{(n)}_i}]\leq\mathbb{E}[e^{(1-p)\mathfrak{m}_{\mathbb{X}}([0,1])}]<\infty$ by the above discussion on the positive moments. Therefore all negative moments of $\mathfrak{m}_{\mathbb{X}}([x,y])$ exist as $\mathfrak{m}_{\mathbb{X}}([(i-1)2^{-n},i2^{-n}))=\mathfrak{m}_{n}([(i-1)2^{-n},i2^{-n}))$ almost surely.
\end{proof}

\subsection{Ergodicity and a zero-one law}

In this section, we establish ergodicity properties of the $H^{2|2}$-model on the hierarchical lattice $\mathbb{X}$ via a mixing property. We then derive a recurrence/transience dichotomy via a zero-one law, establishing a result analogous to~\cite[Proposition~3]{MR3904155} in the context of the hierarchical lattice. While we employ conceptual ideas similar to those in~\cite{MR3904155}, the distinct graph structure of $\mathbb{X}$ requires an alternative proof strategy. Specifically, the argument in~\cite{MR3904155} relies on the finite degree of the graph $\mathcal{G}$ to exploit the $1$-dependence property of the random $\beta$-field~\cite[Proposition~1]{MR3904155}. Since the hierarchical lattice is topologically an infinite complete graph, this dependence structure does not hold, motivating the specific approach presented here.

We work under the assumption that $d_s=2$, so that the inverse temperature $\overline{W}$ remains constant under either the wired boundary condition coupling or the fine-graining coupling.

\subsubsection{Mixing and ergodicity}
Consider the sequence of automorphisms $\{g_n\}_{n\geq 0}$ of the infinite hierarchical lattice $\mathbb{X}$, where $g_0$ is the identity map. For any $n\geq 1$, $g_n$ is defined by exchanging the cluster $[1,2^{n}]$ with $[2^{n}+1,2^{n+1}]$ (via the natural translation $i\mapsto i+2^{n}$ modulo $2^{n+1}$) while keeping the other vertices fixed. Explicitly, we define $g_n(i)=i+2^{n}$ if $i\in[1,2^{n}]$, $g_n(i)=i-2^{n}$ if $i\in [2^{n}+1,2^{n+1}]$, and $g_n(i)=i$ if $i>2^{n+1}$. Observe that $g_n=g_n^{-1}$ and that for any two vertices $i,j\in\mathbb{X}$, $W_{g_n(i),g_n(j)}=W_{ij}$.

The group of automorphisms of $\mathbb{X}$ is generated by the sequence $(g_{n})_{n \geq 0}$ (details can be found in \cite[Appendix A]{km2012}). It is a countable, discrete group of probability measure-preserving maps for the \(H^{2|2}\)-model. Consequently, the ergodicity and mixing conditions discussed in e.g. \cite[Chap. 2]{KerrLi16} are applicable in this setting. We are now ready to establish the ergodicity of the \(H^{2|2}\)-model on the hierarchical lattice.

\begin{prop}
\label{prop:ergodic}
Consider the \(H^{2|2}\)-model on $\mathbb{X}$, under the distribution $\nu^{W}$, the random variables $(\beta_{i})_{i\in \mathbb{X}}$ (which's law is characterized by \eqref{laplace-transform-betaX}), and martingale limit $(\psi_{i})_{i\in \mathbb{X}}$ (from \eqref{eqpsi-n-i}) are stationary and ergodic for the group of automorphisms of $\mathbb{X}$ generated by $(g_{n})_{n\geq 0}$.
\end{prop}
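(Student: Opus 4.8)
The plan is to reduce everything to the ergodicity of the $\beta$-field alone. By the discussion around \eqref{eq:RelationBeta_u}, the $u$-field with pinning at $\delta_n$ is measurable with respect to $(\beta_j)_{j\in\Lambda_n}$, so each $\psi^{(n)}_i$ from \eqref{eqpsi-n-i} is $\mathcal{F}_n$-measurable and its almost sure limit $\psi_i$ is a measurable function of $(\beta_j)_{j\in\mathbb{X}}$; hence $\sigma((\beta_i),(\psi_i))=\sigma((\beta_i))$. It therefore suffices to show that the system $(\Omega,\sigma((\beta_i)),\nu^W,\{T_{g_n}\})$ is ergodic, where $T_g$ denotes the coordinate action $(T_g\beta)_i=\beta_{g(i)}$; the equivariant field $(\psi_i)$ then inherits stationarity and ergodicity. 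Stationarity of $(\beta_i)$ is immediate from the Laplace characterization \eqref{laplace-transform-betaX} together with $W_{g_n(i),g_n(j)}=W_{ij}$. To get ergodicity I would establish a mixing property along the sequence $(g_n)_{n\geq 0}$ and then run a standard zero--one law.

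First I would record the equivariance of the martingale limit. For $n\geq m+1$ the automorphism $g_m$ fixes $\mathbb{X}\setminus\Lambda_n$ pointwise and permutes $\Lambda_n$; moreover the wired boundary weight $W_{i,\delta_n}=\overline{W}\rho^{-n}/(2(\rho-1))$ is constant over $i\in\Lambda_n$, so $g_m$ extends to a weight-preserving automorphism of $\widetilde{\Lambda}_n$ fixing $\delta_n$. Consequently $H^{(n)}[T_{g_m}\beta]$ is the conjugate of $H^{(n)}[\beta]$ by the permutation of $g_m$, so that $G^{(n)}[T_{g_m}\beta](i,j)=G^{(n)}[\beta](g_m(i),g_m(j))$, whence $\psi^{(n)}_i\circ T_{g_m}=\psi^{(n)}_{g_m(i)}$; letting $n\to\infty$ yields $\psi_i\circ T_{g_m}=\psi_{g_m(i)}$.

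The key step is asymptotic independence. Fix $N$ and, for $n>N$, set $V_1=\Lambda_N$ and $V_2=g_n(\Lambda_N)=\{2^n+1,\dots,2^n+2^N\}$. For $\lambda$ supported on $V_1\cup V_2$ I would split the exponent in \eqref{laplace-transform-betaX} into the within-$V_1$, within-$V_2$, cross, and wired-pinning contributions. Every cross weight satisfies $W_{ij}\leq\overline{W}(2\rho)^{-(n+1)}$ since $d_{\mathbb{X}}(i,j)\geq n+1$ for $i\in V_1,\,j\in V_2$, and the discrepancy between the pinning weight $\sum_{j\notin V_1\cup V_2}W_{ij}$ and the single-block pinning weight $\sum_{j\notin V_1}W_{ij}$ is of the same order $O(4^N(2\rho)^{-n})$; both vanish as $n\to\infty$. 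The joint Laplace transform therefore converges to the product of the two marginal Laplace transforms, so $(\beta_i)_{i\in V_1}$ and $(\beta_i)_{i\in V_2}$ are asymptotically independent, each with the marginal law of $(\beta_i)_{i\in\Lambda_N}$ by stationarity.

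Finally I would deduce the zero--one law. Let $A$ be invariant under the group, hence under every $g_n$. Given $\epsilon>0$, approximate $A$ in $L^1(\nu^W)$ by a $\beta$-cylinder $B$ depending only on $(\beta_i)_{i\in\Lambda_N}$ with $\nu^W(A\triangle B)<\epsilon$. Since $g_n^{-1}A=A$ and $T_{g_n}$ is measure preserving, $\nu^W(A)=\nu^W(A\cap g_n^{-1}A)$ differs from $\nu^W(B\cap g_n^{-1}B)$ by at most $2\epsilon$; as $g_n^{-1}B$ depends on the coordinates $V_2=g_n(\Lambda_N)$, the mixing step and stationarity give $\nu^W(B\cap g_n^{-1}B)\to\nu^W(B)^2$, itself within $2\epsilon$ of $\nu^W(A)^2$. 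Letting $\epsilon\to0$ forces $\nu^W(A)=\nu^W(A)^2\in\{0,1\}$. The main obstacle I anticipate is the asymptotic-independence step: controlling the wired-pinning corrections and converting the Laplace-transform factorization into the event-level estimate $\nu^W(B\cap g_n^{-1}B)\to\nu^W(B)^2$ (which I would handle by first proving convergence for products of bounded continuous functions and then approximating indicators), together with the interchange of the automorphism action and the martingale limit defining $\psi$.
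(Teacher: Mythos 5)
Your proposal follows the same overall strategy as the paper: stationarity from the weight-preserving action, a mixing property along the generating sequence $(g_n)$ established by approximating invariant events by finite cylinders, the resulting zero--one law $\nu^W(A)=\nu^W(A)^2$, and the reduction of the $\psi$-field to the $\beta$-field via measurability and equivariance of the almost sure martingale limit (on this last point you are in fact more explicit than the paper, checking that $g_m$ extends to an automorphism of $\widetilde{\Lambda}_n$ fixing $\delta_n$ and preserving the wired weights). The one substantive difference is how the asymptotic independence of $(\beta_i)_{i\in\Lambda_N}$ and $(\beta_i)_{i\in g_n(\Lambda_N)}$ is obtained. You work at the level of Laplace transforms, showing the joint transform factorizes as the cross weights $O((2\rho)^{-n})$ and the pinning discrepancies vanish; this yields weak convergence to the product law, and you correctly flag that upgrading this to the event-level statement $\nu^W(B\cap g_n^{-1}B)\to\nu^W(B)^2$ for arbitrary Borel cylinders $B$ is the remaining obstacle. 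The paper sidesteps exactly this issue by working with the explicit joint density \eqref{eq:Distribution_Beta}: pointwise convergence $p_{\epsilon_n}\to p_0$ plus Scheff\'e's lemma gives convergence in total variation, which handles indicator functions of all Borel sets at once, and the factorization under $p_0$ then comes from the $1$-dependence property of~\cite{MR3904155}. Your gap is fillable --- e.g.\ choose the approximating cylinders $B$ from the algebra of finite unions of rectangles, whose boundaries are null for the absolutely continuous limit law, so that weak convergence suffices; or note that the Laplace-transform computation also shows pointwise convergence of densities and invoke Scheff\'e as the paper does --- but as written the step ``approximating indicators by continuous functions'' needs this extra care, since weak convergence alone does not control $\mu_n(B)$ for general Borel $B$ without uniform integrability of the densities or a continuity-set argument.
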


\begin{proof}
Let \(\mathbf{G}\) be the group of automorphisms of \(\mathbb{X}\) generated by the countable discrete sequence \((g_{n})_{n \geq 1}\). By definition of the \(H^{2|2}\)-model, every element \(h\in \mathbf{G}\) is measure-preserving.

For the \(\beta\)-field, rather than establishing the strong mixing property for the entire group \(\mathbf{G}\), we demonstrate that the generating sequence \((g_n)_{n\geq 1}\) satisfies a mixing property sufficient for our purposes. Specifically, we show that for all events \(E,F\) measurable with respect to \(\sigma((\beta_i)_{i\in\mathbb{X}})\),
\begin{equation}\label{eq:StrongMixing}
    \lim\limits_{n\to\infty}\left(\mathbb{P}[g_n^{-1}(F)\cap E]-\mathbb{P}[g_n^{-1}(F)]\mathbb{P}[E]\right)=0.
\end{equation}
Here, the translated event \(g_n^{-1}(F)\) is defined via the action on random variables: for any \(\sigma((\beta_i)_{i\in\mathbb{X}})\)-measurable function \(f\), we define \((g_n^{-1}f)((\beta_{i})_{i\in\mathbb{X}})=f((\beta_{g_n^{-1}(i)})_{i\in\mathbb{X}})\), and in particular this applies to \(f=\mathbf{1}_{F}\).

We can approximate the events $E,F$ by respective events $E',F'$ depending only on finitely many vertices, i.e. for any $\epsilon>0$, we can find such $E',F'$ satisfying $\mathbb{P}[E\Delta E']<\epsilon$ and $\mathbb{P}[F\Delta F']<\epsilon$. Suppose that $E'$ depends on the finite vertex set $A$ and $F'$ depends on the finite vertex set $B$. Since $g_n$ is measure-preserving, this also implies that $\mathbb{P}[g_n^{-1}(F)\,\Delta\,g_n^{-1}(F')]<\epsilon$. Therefore if we can establish the strong mixing property for $E',F'$, i.e.
\begin{equation}\label{eq:Finite_StrongMixing}
    \lim\limits_{n\to\infty}\left(\mathbb{P}[g_n^{-1}(F')\cap E']-\mathbb{P}[g_n^{-1}(F')]\mathbb{P}[E']\right)=0,
\end{equation}
then by taking $\epsilon$ to $0$, all terms converge to their infinite counterparts and we get~\eqref{eq:StrongMixing}.

Now suppose that $A,B\subset [1,2^{l}]$ for some large $l$, so that $g_n^{-1}(F)$ depends on vertices in $g_n^{-1}(B)$. For any $n\geq l$, the vertices in $g_n^{-1}(B)$ and $A$ are all connected by the same edge weights $\epsilon_n=\overline{W}(2\rho)^{-n}$ uniformly, and this sequence goes to $0$ as $n$ goes to infinity. Denote by $p_{\epsilon_n}(\beta)$ the joint density of the random $\beta$-field in the set $A\cup B$ by~\eqref{eq:Distribution_Beta}, so that
\begin{equation*}
    \mathbb{P}[g_n^{-1}(F')\cap E']=\int\mathbf{1}_{\{g_n^{-1}(F')\}}\mathbf{1}_{\{E'\}}p_{\epsilon_n}(\beta)d\beta.
\end{equation*}
As $\epsilon_n$ goes to $0$, $p_{\epsilon_n}(\beta)$ goes pointwise to $p_0(\beta)$ which is the density corresponding to the same configuration with the vertices in $g_n^{-1}(B)$ and $A$ disconnected. Since we are dealing with positive probability densities, by Scheffé's lemma, $||p_{\epsilon_n}(\beta)-p_0(\beta)||_{L^1}\to 0$ as $\epsilon_n$ goes to $0$, so that
\begin{equation*}
    \left|\int\mathbf{1}_{\{g_n^{-1}(F')\}}\mathbf{1}_{\{E'\}}(p_{\epsilon_n}(\beta)-p_0(\beta))d\beta\right|\leq ||p_{\epsilon_n}(\beta)-p_0(\beta)||_{L^1}\to 0
\end{equation*}
as $\epsilon_n$ goes to $0$. Finally, by the $1$-dependence property~\cite[Proposition~1]{MR3904155}, the $\beta$-field on $g_n^{-1}(B)$ and on $A$ are independent under the distribution $p_0(\beta)$, and factorization yields
\begin{equation*}
    \int\mathbf{1}_{\{g_n^{-1}(F')\}}\mathbf{1}_{\{E'\}}p_0(\beta)d\beta=\mathbb{P}[g_n^{-1}(F')]\mathbb{P}[E'].
\end{equation*}
This concludes the proof of~\eqref{eq:Finite_StrongMixing}, therefore~\eqref{eq:StrongMixing} by the previous discussion.

Now let $E\in \mathcal{F}$ be a $\mathbf{G}$-invariant event, for every $n\geq 1$, $\mathbb{P}(g_{n}^{-1}(E)\cap E)= \mathbb{P}(E)$. By Strong mixing \eqref{eq:StrongMixing}, we also have
$$\lim_{n\to \infty} \mathbb{P} (g_{n}^{-1}(E)\cap E) = \mathbb{P}(E)^{2} .$$
Therefore, $\mathbb{P}(E)(1-\mathbb{P}(E))=0$, which implies that $\mathbb{P}(E)\in \{0,1\}$, we have proved ergodicity of $\beta$ field.

For the martingale limits $(\psi_{i})_{i\in \mathbb{X}}$, by the explicit almost sure limit definition of $\psi_{i}$, if $A\in \mathcal{B}(\mathbb{R}^{\mathbb{X}})$ is $\mathbf{G}$-invariant, then the set $\{\beta: (\psi_{i})_{i\in \mathbb{X}} \in A\}$ is also $\mathbf{G}$-invariant, hence, $A$ has measure 0 or 1 under the law of $(\psi_{i})_{i\in \mathbb{X}}$, we have proved ergodicity of $\psi$ field.
\end{proof}

\begin{coro}
On the Dyson hierarchical lattice $\mathbb{X}$ with spectral dimension $d_s=2$, the $\widetilde{\beta}$-field and the $\widetilde{u}$-field (defined in Sec. \ref{sec:find-grain-coupling}) of the $H^{2|2}$-model are stationary and ergodic with respect to the group of automorphisms of $\mathbb{X}$. Especially, we have the following zero-one law on the limit martingale (defined in \eqref{eq:mart-eux}:
\begin{equation*}
    \mathbb{P}\left[\left\{\forall x\in[0,1), \varphi_{x}=0\right\}\right],  \mathbb{P}\left[\left\{\forall x\in[0,1), \varphi_{x}>0\right\}\right]\in\{0,1\}.
\end{equation*}
\end{coro}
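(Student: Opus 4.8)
The plan is to derive the zero-one law from ergodicity, so the argument splits into establishing that the fine-graining $\widetilde{\beta}$- and $\widetilde{u}$-fields are stationary and ergodic, and then checking invariance of the two events. For stationarity I would argue directly from the construction: each step of the fine-graining coupling splits a vertex into two children whose roles are interchangeable, this being precisely the independent Rademacher variable $\epsilon^{12}$ appearing in the proof of Theorem~\ref{th:ExponentialMartingale}. Hence swapping the two subtrees below any dyadic node preserves the law of the whole family $(\widetilde{\beta}^{(n)})_{n\ge 0}$, and therefore of the limit field $(\varphi_x)_{x\in[0,1)}$; as these swaps generate the automorphism group of the dyadic tree underlying $\mathbb{X}$, the fields are stationary.

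For ergodicity I would adapt the mixing scheme of Proposition~\ref{prop:ergodic}. Given two events, I would approximate them by events depending on the field over finitely many dyadic blocks, then apply a dyadic rearrangement moving the support of the second event to a block at large tree-distance from that of the first. At the critical dimension the coupling weight between blocks at distance $n$ is $\overline{W}(2\rho)^{-n}\to 0$, so Scheffé's lemma gives $L^1$-convergence of the relevant finite-dimensional densities to the disconnected product density, and the factorization over disconnected subgraphs of~\cite[Proposition~1]{MR3904155} yields asymptotic independence. The identity $\widetilde{\beta}^{(n)}\overset{d}{=}\beta^{(n)}$, valid for $d_s=2$, ensures that every finite-dimensional law entering this computation coincides with the one for the wired boundary coupling, so the strong mixing property~\eqref{eq:StrongMixing}, and hence ergodicity, carries over to the fine-graining fields.

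For the zero-one law, the key point is that each generator acts on the martingale limits by a measure-preserving rearrangement of the index. Swapping the two halves of a dyadic interval $v$ permutes the level-$n$ vertices lying under $v$, so from $\varphi_x=\lim_n e^{\widetilde{u}^{(n)}_{i_n^x}}$ one obtains $\varphi_x\circ h_v=\varphi_{\tau_v(x)}$ almost surely, where $\tau_v$ is the involution of $[0,1)$ that reflects $v$ about its midpoint and fixes its complement. Since each $\tau_v$ is a bijection of $[0,1)$, both events $\{\forall x,\ \varphi_x>0\}$ and $\{\forall x,\ \varphi_x=0\}$ are invariant modulo null sets under the whole group, and ergodicity forces each of their probabilities into $\{0,1\}$. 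The uncountable intersections defining these events are reduced to dyadic $x$ using that $x\mapsto\varphi_x^{(n)}$ is constant on each level-$n$ interval.

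The principal obstacle is the ergodicity transfer. Although the finite-dimensional laws coincide at $d_s=2$, the fine-graining and wired boundary couplings have genuinely different cross-level structures, so one must check that the decorrelation mechanism---vanishing coupling weights at large tree-distance---truly applies to the continuum martingale limit $(\varphi_x)_x$ and not merely to a fixed finite set of vertices. A related point requiring care is that the transformations acting nontrivially on the fine-grained limit are the dyadic rearrangements swapping children of dyadic nodes; one must identify these correctly with the automorphism group of $\mathbb{X}$ so that stationarity, ergodicity, and the invariance of the events all refer to the same group action.
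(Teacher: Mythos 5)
Your route differs from the paper's. The paper does not re-establish mixing for the fine-graining fields at all: it observes that at $d_s=2$ the fine-graining fields are equal in distribution to the wired-boundary fields, so stationarity and ergodicity are inherited verbatim from Proposition~\ref{prop:ergodic} (ergodicity being a property of the law alone); the zero-one law is then obtained by identifying $\mathbb{P}\left[\{\forall x,\ \varphi_x=0\}\right]$ with the probability of the corresponding invariant event for the $\psi$-field on the wired-boundary side, and by invoking vertex transitivity of the action of $\mathbf{G}$ for the event $\{\forall x,\ \varphi_x>0\}$. Your direct derivation of stationarity from the Rademacher symmetry $\epsilon^{12}$ is a sound self-contained alternative, and your reduction of the uncountable intersection to dyadic $x$ via piecewise constancy of $\varphi^{(n)}_x$ is fine (one small slip: the involution of $[0,1)$ induced by swapping the two children of a dyadic node exchanges the two half-intervals by translation, as in the definition of $g_n$, rather than reflecting about the midpoint; either map is a measure-preserving bijection of the index set, so the invariance argument is unaffected).

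The genuine gap is the one you flag yourself, and it is not cosmetic. Proposition~\ref{prop:ergodic} proves mixing for events depending on finitely many vertices of a single field on $\mathbb{X}$, whereas the events $\{\forall x,\ \varphi_x=0\}$ and $\{\forall x,\ \varphi_x>0\}$ are measurable with respect to the entire multi-level sequence $(\widetilde{u}^{(n)})_{n\ge 0}$; and the paper is explicit that even at $d_s=2$ the cross-level joint law of the fine-graining coupling differs from that of the wired-boundary coupling. Hence the identity $\widetilde{\beta}^{(n)}\overset{d}{=}\beta^{(n)}$ at each fixed $n$ does not let you import strong mixing for events involving several levels: you would need to redo the approximation with events depending on finitely many levels and finitely many blocks, and show that the joint multi-level densities factorize as the inter-block weight tends to zero. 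That is precisely the step your proposal leaves open. The paper's shortcut---routing the zero-one law through the $\psi$-events, which live on the wired-boundary side where ergodicity of the full field is already available, and using the single-point equivalence between almost sure and distributional convergence to $0$ of the positive martingales---avoids this issue entirely; you should either adopt that reduction or supply the multi-level mixing estimate.
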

\begin{proof}
As in spectral dimension $d_{s}=2$, $(\widetilde{\beta}_{i})_{i\in \mathbb{X}}$ and $(\beta_{i})_{i\in \mathbb{X}}$ are equal in distribution, same holds for $(\widetilde{u}_{i})_{i\in \mathbb{X}}$ and $(u_{i})_{i\in \mathbb{X}}$. We have
\[\mathbb{P}\left[\left\{\forall x\in[0,1), \varphi_{x}=0\right\}\right]= \mathbb{P}\left[\left\{\forall x\in[0,1), \psi_{x}=0\right\}\right]\]
and $\left\{\forall x\in[0,1), \psi_{x}=0\right\}$ is an invariant event. For $\left\{\forall x\in[0,1), \varphi_{x}>0\right\}$, we use the fact that the action of $\mathbf{G}$ is vertex transitive.
\end{proof}

Comparing this with~\cite[Proposition~3]{MR3904155}, one obtains the following dichotomy criterion:
\begin{theo}\label{th:VRJP_Dichotomy}
Consider the \(H^{2|2}\)-model on the Dyson hierarchical lattice $\mathbb{X}$ with spectral dimension $d_s=2$ and inverse temperature $\overline{W}>0$. The Vertex Reinforced Jump Process on $\mathbb{X}$ is either almost surely recurrent or almost surely transcient. Furthermore,
\begin{itemize}
    \item If every vertex is visited infinitely often, then almost surely, $\forall x\in[0,1), \varphi_{x}=0$ and $\forall i\in \mathbb{X},\ \psi_{i}=0$;
    \item If every vertex is visited finitely often, then almost surely, $\forall x\in[0,1), \varphi_{x}>0$ and $\forall i\in \mathbb{X}, \psi_{i}>0$.
\end{itemize}
\end{theo}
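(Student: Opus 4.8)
The plan is to chain together three facts: the identification of VRJP recurrence with the vanishing of the wired-boundary martingale limit $\psi$ coming from \cite[Proposition~3]{MR3904155}, the ergodicity of the $\psi$-field established in Proposition~\ref{prop:ergodic}, and the equality in law between the fine-graining limit $\varphi$ and the wired-boundary limit $\psi$ recorded in the preceding Corollary (the zero-one law for $\varphi$). Schematically, recurrence will be recast as the $\mathbf{G}$-invariant event $\{\psi_i=0\text{ for all }i\}$, ergodicity will upgrade this to a $0$-$1$ statement and hence to the recurrence/transience dichotomy, and the coupling equality will transport the conclusion from $\psi$ to $\varphi$ on $[0,1)$.

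First I would transfer \cite[Proposition~3]{MR3904155} to $\mathbb{X}$, obtaining the equivalences: the VRJP visits every vertex infinitely often if and only if $\psi_i=0$ for all $i$, and visits every vertex finitely often if and only if $\psi_i>0$ for all $i$. The VRJP itself is well defined on $\mathbb{X}$ despite the lack of local finiteness, since the total rates $W_i=\sum_{j}W_{ij}=\overline{W}\sum_{j}(2\rho)^{-d_{\mathbb{X}}(i,j)}$ are finite for $\rho>1$, and the martingale limit $\psi$ remains the intrinsic quantity measuring escape to infinity in the $\beta$-environment. Consequently the recurrence event is measurable with respect to $(\psi_i)_{i\in\mathbb{X}}$ and is invariant under $\mathbf{G}$, as the generators $g_n$ act by graph automorphisms and merely permute the indices in $\{\psi_i=0\ \forall i\}$.

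Next I would apply the ergodicity of the $\psi$-field from Proposition~\ref{prop:ergodic}: every $\mathbf{G}$-invariant event has probability $0$ or $1$. Applied to $\{\psi_i=0\ \forall i\}$ this gives that the VRJP is almost surely recurrent or almost surely transient, and correspondingly that $\psi_i=0$ for all $i$ almost surely, or $\psi_i>0$ for all $i$ almost surely. To obtain the statement for $\varphi$, I would invoke the equality $\mathbb{P}[\forall x,\ \varphi_x=0]=\mathbb{P}[\forall x,\ \psi_x=0]$ from that Corollary, valid because at $d_s=2$ the fine-graining and wired-boundary couplings yield infinite-volume fields of the same law. In the recurrent case both probabilities equal $1$, so almost surely $\varphi_x=0$ for every $x$; intersecting the two probability-one events $\{\psi_i=0\ \forall i\}$ and $\{\forall x,\ \varphi_x=0\}$ delivers both assertions of the first bullet simultaneously. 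The transient case is identical after replacing these by the complementary $0$-$1$ events $\{\psi_i>0\ \forall i\}$ and $\{\forall x,\ \varphi_x>0\}$.

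The delicate point is the first step. The ergodicity needed for the dichotomy is already in hand, and it was precisely the ingredient that had to be reworked on $\mathbb{X}$ because the $1$-dependence used in \cite{MR3904155} fails here; what remains is to justify that the recurrence criterion relating the VRJP to $\psi$ carries over to the non-locally-finite lattice, using only finiteness of the total rates rather than bounded degree. A second, more routine issue is to confirm that the two martingale limits under comparison — $\psi_i$ read off at a fixed vertex under the wired-boundary coupling, and $\varphi_x$ read off along the descending dyadic ray through $x$ under the fine-graining coupling — vanish on events of equal probability, so that the $0$-$1$ law may legitimately be shared between the $\psi$- and $\varphi$-pictures; this is exactly what the preceding Corollary supplies.
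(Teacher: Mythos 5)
Your proposal is correct and follows essentially the same route as the paper: the dichotomy for $\psi$ is obtained exactly as in \cite[Proposition~3]{MR3904155} using the ergodicity of Proposition~\ref{prop:ergodic}, and the conclusion is transported to $\varphi$ via the equality in law between the fine-graining and wired-boundary fields at $d_s=2$. The only cosmetic difference is that the paper performs the transfer pointwise, noting that for each fixed $x_0$ the sequence $(\varphi^{(n)}_{x_0})_n$ has the same law as $(\psi^{(n)}_{i_0})_n$ and that almost sure convergence to the constant $0$ is equivalent to convergence in distribution to $0$, whereas you invoke the Corollary's global probability identity directly; both rest on the same distributional identification.
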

\begin{proof}
The proof for the martingale limit $\psi_{i},i\in \mathbb{X}$ is exactly the same as in ~\cite[Proposition~3]{MR3904155}. Now for every fixed $x_0\in[0,1)$ and $i_0\in\mathbb{X}$, the distributions of the sequences of random variables
\begin{equation*}
    (e^{u^{(n)}_{x_0})})_{n\geq 1}\quad\text{and}\quad(e^{u^{(n)}_{i_0})})_{n\geq 1}
\end{equation*}
are identical. Using the fact that the almost sure convergence to the constant $0$ is equivalent to the convergence in distribution to $0$, if $\lim\limits_{n\to\infty}\psi^{(n)}_{i_0}=0$ in the almost sure sense (thus in distribution), then so does $\lim\limits_{n\to\infty} \varphi^{(n)}_{x_0}=0$ in distribution and thus almost surely, and vice versa. This shows that almost surely the events $\left\{\forall x\in[0,1), \varphi_{x}=0\right\}$ and $\left\{\forall i\in\mathbb{X}, \psi_{i}=0\right\}$ are identical, hence the desired dichotomy criterion.
\end{proof}

\subsection{A characterization theorem: recurrence and singularity of the limiting measure}
We now show that the above zero-one law on the recurrence/transcience of the Vertex Reinforced Jump Process on the hierarchical lattice $\mathbb{X}$ characterizes the singularity of the limiting measure $\mathfrak{m}_{\mathbb{X}}$ with respect to the usual Lebesgue measure $d\lambda$ on $[0,1]$.

\begin{theo}
Recall from Theorem~\ref{th:VRJP_Dichotomy} that the Vertex Reinforced Jump Process on $\mathbb{X}$ with spectral dimension $d_s=2$ is either almost surely recurrent or almost surely transcient. Furthermore,
\begin{itemize}
    \item If every vertex is visited infinitely often, then almost surely, the limiting measure $\mathfrak{m}_{\mathbb{X}}$ is singular with respect to the Lebesgue measure $d\lambda$;
    \item If every vertex is visited finitely often, then almost surely, the limiting measure $\mathfrak{m}_{\mathbb{X}}$ has an absolutely continuous component with everywhere non-trivial density with respect to the Lebesgue measure $d\lambda$.
\end{itemize}
\end{theo}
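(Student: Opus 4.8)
The plan is to identify the Radon--Nikodym derivative of the absolutely continuous part of $\mathfrak{m}_{\mathbb{X}}$ with the pointwise martingale limit $\varphi_x$ from~\eqref{eq:mart-eux}, and then to read off both alternatives directly from the dichotomy of Theorem~\ref{th:VRJP_Dichotomy}. Write $\mathfrak{m}_{\mathbb{X}}=g\,d\lambda+\mathfrak{m}^{s}_{\mathbb{X}}$ for the Lebesgue decomposition, and for $x\in[0,1)$ let $I_n^x=[\frac{i_n^x-1}{2^{n}},\frac{i_n^x}{2^{n}})$ be the level-$n$ dyadic interval containing $x$. The whole argument rests on one elementary observation: since the fine-graining coupling preserves, realization-wise, the mass of every dyadic interval (as noted just before the Non-degeneracy proposition), one has $\mathfrak{m}_{\mathbb{X}}(I_n^x)=\mathfrak{m}_n(I_n^x)=2^{-n}\varphi^{(n)}_x$, because the density of $\mathfrak{m}_n$ is constant and equal to $\varphi^{(n)}_x$ on $I_n^x$. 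Thus the dyadic quotient is exactly the level-$n$ martingale:
\[
    2^{n}\,\mathfrak{m}_{\mathbb{X}}(I_n^x)=\varphi^{(n)}_x.
\]

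Next I would invoke the dyadic Lebesgue differentiation theorem. The quotients $D_n(x):=\frac{\mathfrak{m}_{\mathbb{X}}(I_n^x)}{\lambda(I_n^x)}=2^{n}\mathfrak{m}_{\mathbb{X}}(I_n^x)$ form a non-negative martingale for the dyadic filtration on $([0,1],\lambda)$, and the standard theorem gives, for $\lambda$-a.e.\ $x$, that $D_n(x)\to g(x)$, the a.c.\ density, the singular part contributing $2^{n}\mathfrak{m}^{s}_{\mathbb{X}}(I_n^x)\to 0$ for $\lambda$-a.e.\ $x$. On the other hand, by~\eqref{eq:mart-eux} we have $D_n(x)=\varphi^{(n)}_x\to\varphi_x$ almost surely for each fixed $x$; a Tonelli argument on $([0,1]\times\Omega,\lambda\otimes\mathbb{P})$ upgrades this to: almost surely, for $\lambda$-a.e.\ $x$, $\varphi^{(n)}_x\to\varphi_x$. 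Comparing the two limits of the same sequence $D_n(x)$ yields the key identification $g(x)=\varphi_x$ for $\lambda$-a.e.\ $x$, almost surely. I would also record the complementary Fatou bound: since $\mathfrak{m}_n(A)=\int_A\varphi^{(n)}_x\,d\lambda$ and $\mathfrak{m}_n(I)\to\mathfrak{m}_{\mathbb{X}}(I)$ on dyadic intervals, Fatou gives $\mathfrak{m}_{\mathbb{X}}(I)\ge\int_I\varphi_x\,d\lambda$ for every dyadic $I$, hence $\mathfrak{m}_{\mathbb{X}}\ge\varphi_x\,d\lambda$ as measures by regularity.

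With this identification the dichotomy is immediate. In the recurrent regime, Theorem~\ref{th:VRJP_Dichotomy} gives $\varphi_x=0$ for all $x$ almost surely; hence $g=0$ $\lambda$-a.e., so $\mathfrak{m}^{ac}_{\mathbb{X}}=0$ and $\mathfrak{m}_{\mathbb{X}}$ is purely singular. In the transient regime $\varphi_x>0$ for all $x$ almost surely; hence $g=\varphi_x>0$ for $\lambda$-a.e.\ $x$, so $\mathfrak{m}^{ac}_{\mathbb{X}}=g\,d\lambda$ is a non-trivial absolutely continuous component. For everywhere non-triviality, note that for every subinterval $J\subset[0,1]$ one has $\mathfrak{m}^{ac}_{\mathbb{X}}(J)=\int_J g\,d\lambda\ge\int_J\varphi_x\,d\lambda>0$, using either the identification $g=\varphi$ a.e.\ or the Fatou lower bound together with $\varphi_x>0$ everywhere; this is precisely the assertion that the density is everywhere non-trivial, and it matches the identification of the density with the martingale limit announced in the abstract.

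The genuinely delicate points are the two quantifier exchanges rather than any hard estimate. First, the differentiation theorem is an ``a.s.\ in $\omega$, a.e.\ in $x$'' statement while~\eqref{eq:mart-eux} is a ``for each fixed $x$, a.s.\ in $\omega$'' statement; reconciling them is what the Tonelli argument achieves, arranging the null sets compatibly so that $g(x)=\varphi_x$ holds on a common full-measure set of the product space. Second, in the transient case one must pass from ``a.e.\ positive density'' to ``non-trivial on every subinterval'', and this is exactly where the pointwise positivity $\varphi_x>0$ for \emph{all} $x$ supplied by the zero--one law, rather than merely a.e.\ positivity, does the work. The classical input that the dyadic derivative of the singular part vanishes $\lambda$-a.e.\ can simply be cited.
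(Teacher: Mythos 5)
Your proposal is correct and follows essentially the same route as the paper: both identify the density of the absolutely continuous part with the martingale limit $\varphi_x$ via the (dyadic) Lebesgue differentiation theorem applied to $\frac{\mathfrak{m}_{\mathbb{X}}(I_n^x)}{\lambda(I_n^x)}=\varphi^{(n)}_x$, using the realization-wise preservation of dyadic masses, and then read off both cases from the dichotomy of Theorem~\ref{th:VRJP_Dichotomy}. Your version is in fact slightly more careful than the paper's, since you make explicit the Tonelli argument reconciling the ``a.s.\ in $\omega$, a.e.\ in $x$'' and ``for each $x$, a.s.\ in $\omega$'' statements, a step the paper leaves implicit.
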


\begin{rema}
Notice that we don't exclude here the possibility that $\mathfrak{m}_{\mathbb{X}}$ has atoms. The atomless property would follow from a classical union bound argument if we had an exponential decay estimate of type~\eqref{eq:WangZeng} with some constant $C(\overline{W},s)>2$. In constrast, the following proof does not use any quantitative estimate.
\end{rema}

\begin{proof}
Suppose now that the Vertex Reinforced Jump Process on $\mathbb{X}$ is recurrent. Then by Theorem~\ref{th:VRJP_Dichotomy}, almost surely we have that $\forall x\in[0,1), \varphi_{x}=0$. Denote by $I_n$ the nested sequence of dyadic intervals that converge to $x$. By the Lebesgue differentiation theorem~\cite[Theorem~7.10]{MR924157}, the absolute continuous part of the measure $\mathfrak{m}_{\mathbb{X}}$ has density $\lim\limits_{n\to\infty}\frac{\mathfrak{m}_n(I_n)}{\lambda(I_n)}=\varphi_{x}=0$ at Lebesgue almost every point $x$, concluding that $\mathfrak{m}_{\mathbb{X}}$ is almost surely singular in this case.

Suppose now that the Vertex Reinforced Jump Process on $\mathbb{X}$ is transient. Then by Theorem~\ref{th:VRJP_Dichotomy}, almost surely we have that $\forall x\in[0,1), \varphi_{x}>0$, and applying the Lebesgue differentiation theorem again, the Radon-Nikodym derivative (of the absolutely continuous part) $\frac{d\mathfrak{m}_{\mathbb{X}}}{d\lambda}(x)$ is exactly the limit of the exponential martingale $\varphi_{x}$ for $x\in[0,1]$. This density function is everywhere non-trivial in the transcient case by Theorem~\ref{th:VRJP_Dichotomy}, concluding the proof.
\end{proof}

\begin{rema}
The recurrence/transcience phases of the Vertex Reinforced Jump Process is a probabilistic analogy of the Anderson localization/delocalization transition. For more physical backgrounds and parallels between these notions, see~\cite{MR4680390,MR3904155,MR2953867,MR2728731}. The above theorem shows that we have an exact characterization of the recurrence/transcience of the Vertex Reinforced Jump Process on the Dyson hierarchical lattice in terms of the (pure) singularity of a natural random measure $\mathfrak{m}_{\mathbb{X}}$ on $[0,1]$ with respect to the Lebesgue measure.
\end{rema}

\bibliographystyle{alpha}

\end{document}